\theoremstyle{plain}
\newtheorem{theorem}{Theorem}[section]
\newtheorem{lemma}[theorem]{Lemma}
\newtheorem{proposition}[theorem]{Proposition}
\newtheorem{claim}[theorem]{Claim}
\theoremstyle{definition}
\theoremstyle{remark}
\newtheorem*{remark}{Remark}
\newcommand{\N}{\mathbb{N}}
\newcommand{\Z}{\mathbb{Z}}
\newcommand{\Zd}{\Z/2\Z}
\newcommand{\Zt}{\Z/3\Z}
\newcommand{\Zm}{\Z/m\Z}
\newcommand{\m}{\mathfrak{m}}
\newcommand{\Sc}{\mathrm{S}}
\newcommand{\WS}{\mathrm{WS}}
\newcommand{\T}{\mathrm{T}}
\title{\bf Modular Schur numbers}
\author{Jonathan Chappelon\thanks{Part of this work was carried out during a short postdoctoral research visit at the Instituto de Matemáticas de la Universidad de Sevilla Antonio de Castro Brzezicki (IMUS).}\\
\small Institut de Mathématiques et de Modélisation de Montpellier\\[-0.8ex]
\small Université Montpellier 2, France\\
\small\tt jonathan.chappelon@um2.fr\\
\and
María Pastora Revuelta Marchena \qquad  María Isabel Sanz Domínguez\\
\small Departamento de Matemática Aplicada I\\[-0.8ex]
\small Escuela Técnica Superior de Ingeniería de Edificación\\[-0.8ex]
\small Universidad de Sevilla, Spain\\
\small\tt \{pastora,isanz\}@us.es
}
\date{June 17, 2013\\
\small Mathematics Subject Classifications: 05C55, 05D10, 11A07, 05A17, 11P81, 11P83}
\begin{document}

\maketitle

\begin{abstract}
  For any positive integers $l$ and $m$, a set of integers is said to be (weakly) $l$-sum-free modulo $m$ if it contains no (pairwise distinct) elements $x_1,x_2,\ldots,x_l,y$ satisfying the congruence $x_1+\ldots+x_l\equiv y\bmod{m}$. It is proved that, for any positive integers $k$ and $l$, there exists a largest integer $n$ for which the set of the first $n$ positive integers $\{1,2,\ldots,n\}$ admits a partition into $k$ (weakly) $l$-sum-free sets modulo $m$. This number is called the generalized (weak) Schur number modulo $m$, associated with $k$ and $l$. In this paper, for all positive integers $k$ and $l$, the exact value of these modular Schur numbers are determined for $m=1$, $2$ and $3$.

  \bigskip\noindent \textbf{Keywords:} modular Schur numbers; Schur numbers; weak Schur numbers; sum-free sets; weakly sum-free sets
\end{abstract}

\section{Introduction}

In \cite{Guy}, Guy proposed two unsolved problems in elementary number theory. The first one is the Schur's problem of partitioning integers into sum-free classes (Problem E12). Schur proved in \cite{Schur} that if the set of the first $k!e$ positive integers is partitioned into $k$ parts any way, then $x+y=z$ can be solved in integers within one part. Let $\Sc(k)$ be the largest integer $n$ such that there exists a partition of the first $n$ positive integers $\{1,2,\ldots,n\}$ into $k$ parts with no solution to the equation
$$
x+y=z,
$$
in any part. The exact value of $\Sc(k)$ is known only for $k\in\{1,2,3,4\}$ and the Problem E12 is to determine it for $k\geq5$. The second unsolved problem proposed by Guy is a modular version of this Schur's problem (Problem E13). It was posed by Abbott and Wang in \cite{Abbott}. Let $\T(k)$ be the largest integer $n$ such that there exists a partition of $\{1,2,\ldots,n\}$ into $k$ parts, with no solution to the congruence
$$
x+y\equiv z\pmod{n+1},
$$
in any part. Abbott and Wang determined that $\T(k)=\Sc(k)$ for
$k\in\{1,2,3,4\}$, and they conjectured that the equality is true
for all positive integers $k$. The main purpose of this paper is to study and explicitly determine modular generalizations of Schur numbers.

\subsection{Modular generalized Schur numbers}

Let $k$ be a positive integer and let $S$ be a set of integers. A $k$-partition of $S$ is a set $P=\{S_1,\ldots,S_k\}$ of subsets of $S$ such that any element of $S$ is contained in exactly one element of $P$. Let $l$ be a positive integer. A set of integers is said to be \textit{$l$-sum-free} if it contains no elements $x_1,\ldots,x_l,y$ satisfying
$$
x_1+\ldots+x_l=y.
$$
For every positive integer $k$, the \textit{generalized Schur number} $\Sc(k,l)$ is the largest integer $n$ for which the set of the first $n$ positive integers $\{1,2,\ldots,n\}$ admits a $k$-partition into $l$-sum-free sets.
\par For $l=2$, the numbers $\Sc(k,2)=\Sc(k)$ are known as \textit{Schur numbers}. They have been introduced by Schur himself in 1916 \cite{Schur}, in order to study a modular version of the Fermat's Last Theorem. He proved that those numbers are always finite, for every positive integer $k$. The first few Schur numbers are given in Table~\ref{tab2}.
\begin{table}[!ht]
\begin{center}
\begin{tabular}{|c|c|c|c|c|c|c|c|}
\hline
$k$ & $1$ & $2$ & $3$ & $4$ & $5$ & $6$ & $7$ \\
\hline
$\Sc(k,2)$ & $1$ & $4$ & $13$ & $44$ & $160\leq \cdots \leq 305$ & $\geq 536$ & $\geq 1680$ \\
\hline
\end{tabular}
\end{center}
\caption{\label{tab2}The first few Schur numbers $\Sc(k,2)$.}
\end{table}
\par The exact value of $\Sc(4,2)$ was obtained by Baumert \cite{Abbott2}. The lower and upper bounds of $\Sc(5,2)$ are due to Exoo \cite{Exoo} and Sanz \cite{Sanz}, respectively. Finally, the lower bounds of $\Sc(6,2)$ and $\Sc(7,2)$ were obtained by Fredricksen and Sweet \cite{Fredricksen} by considering symmetric sum-free partitions.
\par Many generalizations of Schur numbers have appeared since their introduction. In this paper, the generalized Schur numbers that we consider are similarly defined in \cite{Beutelspacher,Landman}. These numbers are always finite (see \cite{Rado,Landman} for instance).
\par Let $m$ be a positive integer. A set of integers is said to be \textit{$l$-sum-free modulo $m$} if it contains no elements $x_1,\ldots,x_l,y$ satisfying
$$
x_1+\ldots+x_l\equiv y\pmod{m}.
$$
For every positive integer $k$, the \textit{generalized Schur number modulo $m$}, denoted by $\Sc_m(k,l)$, is the largest integer $n$ for which the set of the first $n$ positive integers $\{1,2,\ldots,n\}$ admits a $k$-partition into $l$-sum-free sets modulo $m$.
\par Obviously, for every modulus $m$, the inequality
\begin{equation}\label{eq1}
\Sc_m(k,l)\leq \Sc(k,l)
\end{equation}
holds because a $l$-sum-free set modulo $m$ of integers is also $l$-sum-free. Moreover, since $m+\ldots+m\equiv m\bmod{m}$, a $l$-sum-free set of integers modulo $m$ does not contain the integer $m$. Therefore, we have
\begin{equation}
\Sc_m(k,l)\leq m-1.
\end{equation}
\par For $l=2$, Abbott and Wang investigated in \cite{Abbott} the numbers
$$
\T(k)=\max\left\{n\in\N\ \middle|\ \Sc_{n+1}(k,2)=n\right\},
$$
where $k$ is a positive integer. They obtained that $\T(k)=\Sc(k,2)$ for $k\in\{1,2,3,4\}$ and they conjectured that the equality is true for all positive integers $k$.
\par In this paper, we explicitly determine the modular generalized Schur numbers $\Sc_m(k,l)$ for small values of $m$: for all moduli $m\in\{1,2,3\}$. For $m=1$, the result is clear. Indeed,
$$
\Sc_1(k,l)=0,
$$
for all $k\geq1$ and $l\geq1$, since every positive integer $x$ verifies $x+\ldots+x \equiv x\bmod{1}$ and thus, there does not exist non-empty $l$-sum-free set modulo $1$. For $m=2$ and $m=3$, the exact values of $\Sc_m(k,l)$ are given by the following theorems.
\begin{theorem}\label{thm1}
Let $k$ and $l$ be two positive integers. Then,
$$
\Sc_2(k,l) = \left\{\begin{array}{lll}
0 & \text{for} & l\ \text{odd},\\
1 & \text{for} & l\ \text{even}.
\end{array}
\right.
$$
\end{theorem}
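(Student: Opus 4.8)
The plan is to exploit the general upper bound $\Sc_m(k,l)\le m-1$ established above, which for $m=2$ gives $\Sc_2(k,l)\le 1$. Thus the only question is whether the value is $0$ or $1$, and this is decided entirely by the block containing the integer $1$: since $\Sc_2(k,l)$ is the largest $n$ for which $\{1,\dots,n\}$ admits a valid $k$-partition, and since the empty set ($n=0$) is vacuously partitionable, I only need to understand when the singleton $\{1\}$ can sit inside an $l$-sum-free set modulo $2$.

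First I would stress that in the (non-weak) definition of $l$-sum-free modulo $m$ the elements $x_1,\dots,x_l,y$ need not be distinct. Consequently, testing whether $\{1\}$ is $l$-sum-free modulo $2$ reduces to a single congruence: the only available choice is $x_1=\cdots=x_l=y=1$, which yields the condition $\underbrace{1+\cdots+1}_{l}\equiv 1\pmod 2$, that is $l\equiv 1\pmod 2$. Hence $\{1\}$ fails to be $l$-sum-free modulo $2$ precisely when $l$ is odd, and is $l$-sum-free modulo $2$ precisely when $l$ is even.

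For the case $l$ odd, I would argue that for every $n\ge 1$ any $k$-partition of $\{1,\dots,n\}$ must place the integer $1$ into some block $S_i$; by the computation above that block then contains the forbidden configuration $x_1=\cdots=x_l=y=1$, so it is not $l$-sum-free modulo $2$. Therefore no $n\ge 1$ is admissible while $n=0$ trivially is, giving $\Sc_2(k,l)=0$. For the case $l$ even, I would exhibit the explicit partition $\{\{1\},\emptyset,\dots,\emptyset\}$ of $\{1\}$ into $k$ blocks; its unique nonempty block $\{1\}$ is $l$-sum-free modulo $2$, so $\Sc_2(k,l)\ge 1$, and together with the upper bound this forces $\Sc_2(k,l)=1$.

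There is no genuine analytic obstacle here; the whole argument is careful bookkeeping of the definitions. The one point that must not be overlooked is the role of repetitions: it is precisely because the $x_i$ may coincide, and may coincide with $y$, that a single-element set can already witness a violation, and this is exactly what produces the parity dichotomy. I would therefore make this observation explicit so that the split between odd and even $l$ is completely transparent.
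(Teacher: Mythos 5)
Your proposal is correct and follows essentially the same route as the paper: both invoke the general bound $\Sc_m(k,l)\leq m-1$ to get $\Sc_2(k,l)\leq 1$, and both decide between $0$ and $1$ by observing that $\sum_{i=1}^{l}1=l$, so the singleton $\{1\}$ is $l$-sum-free modulo $2$ exactly when $l$ is even. Your write-up merely makes explicit the bookkeeping (placement of $1$ in some block, the trivial partition witnessing the lower bound) that the paper leaves implicit.
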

\begin{theorem}\label{thm2}
Let $k$ and $l$ be two positive integers. Then,
$$
\Sc_3(k,l) = \left\{\begin{array}{lll}
0 & \text{for} & k\geq1\ \text{and}\ l\equiv 1\bmod{3},\\
1 & \text{for} & k=1\ \text{and}\ l\equiv 0,2\bmod{3},\\
2 & \text{for} & k\geq2\ \text{and}\ l\equiv 0,2\bmod{3}.
\end{array}\right.
$$
\end{theorem}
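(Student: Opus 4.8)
The plan is to reduce the statement to a small finite computation. By the bound $\Sc_3(k,l)\le m-1=2$ recorded above, it suffices to decide, for each $k$ and $l$, whether $\{1\}$ and whether $\{1,2\}$ admit a $k$-partition into $l$-sum-free sets modulo $3$. Since every subset of an $l$-sum-free set modulo $3$ is again $l$-sum-free modulo $3$, partitionability is monotone in $n$, so $\Sc_3(k,l)$ is simply the largest $n\in\{0,1,2\}$ for which $\{1,\dots,n\}$ is partitionable. The whole argument therefore rests on classifying which subsets of $\{1,2\}$ are $l$-sum-free modulo $3$, and this depends only on the residues $1\equiv 1$ and $2\equiv 2\pmod 3$.

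First I would establish two elementary facts. (i) The singleton $\{1\}$ is $l$-sum-free modulo $3$ if and only if $l\not\equiv 1\pmod 3$: the only possible witness takes $x_1=\dots=x_l=y=1$, and $\underbrace{1+\dots+1}_{l}\equiv 1\pmod 3$ holds precisely when $l\equiv 1\pmod 3$; likewise $\{2\}$ fails exactly when $2l\equiv 2$, i.e. $l\equiv 1\pmod 3$. Hence both singletons are $l$-sum-free iff $l\equiv 0,2\pmod 3$. (ii) The set $\{1,2\}$ is never $l$-sum-free modulo $3$: for $l=1$ this is immediate since the subset $\{1\}$ already fails (here $l\equiv 1$), while for $l\ge 2$, writing $a$ for the number of summands equal to $1$ among $x_1,\dots,x_l$, the sum equals $2l-a$, and as $a$ runs through $\{0,1,\dots,l\}$ this covers every residue class modulo $3$; in particular some choice gives a sum $\equiv 1$ or $\equiv 2\pmod 3$, which I match with a corresponding $y\in\{1,2\}$. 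The crucial, easily overlooked point is that the (strong) definition allows the $x_i$ to repeat, which is exactly why a singleton can fail to be $l$-sum-free.

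With these in hand the three cases follow by assembling the partitionability of $\{1\}$ and $\{1,2\}$. If $l\equiv 1\pmod 3$, then by (i) any part containing $1$ contains the non-$l$-sum-free set $\{1\}$ and is therefore itself not $l$-sum-free, so even $\{1\}$ cannot be partitioned and $\Sc_3(k,l)=0$ for every $k\ge 1$. If $l\equiv 0,2\pmod 3$, then by (i) the partition whose parts are $\{1\}$ and empty sets shows $\{1\}$ is partitionable for every $k\ge 1$, giving $\Sc_3(k,l)\ge 1$; for $k=1$ the only $1$-partition of $\{1,2\}$ is $\{1,2\}$ itself, which is not $l$-sum-free by (ii), so $n=2$ fails and $\Sc_3(1,l)=1$; for $k\ge 2$ the partition placing $1$ and $2$ in distinct singleton parts and leaving the rest empty works by (i), so together with the bound $\Sc_3(k,l)\le 2$ we get $\Sc_3(k,l)=2$. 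I do not expect a genuine obstacle here, as the argument is a finite case analysis; the only delicate point is verifying claim (ii) uniformly in $l$, namely that $\{2l-a:0\le a\le l\}$ already exhausts the residues modulo $3$, which is immediate once $l\ge 2$ supplies three consecutive values of $a$.
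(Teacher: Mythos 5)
Your proposal is correct and takes essentially the same approach as the paper: both use the bound $\Sc_3(k,l)\le 2$, characterize when the singletons $\{1\}$ and $\{2\}$ are $l$-sum-free modulo $3$ (exactly when $l\not\equiv 1\bmod 3$), show that $1$ and $2$ cannot lie in the same part, and finish for $k\ge 2$ with the partition into the singletons $\{1\}$ and $\{2\}$. The only cosmetic difference is in the middle step: the paper exhibits explicit violating sums for each residue class ($l$ ones when $l\equiv 2\bmod 3$, and $l-1$ ones plus a $2$ when $l\equiv 0\bmod 3$), whereas you argue uniformly that the sums $2l-a$, $0\le a\le l$, cover all residues modulo $3$ once $l\ge 2$.
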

A simple proof of these theorems will be provided in Section~\ref{secsm}.

\subsection{Modular generalized weak Schur numbers}

A set of integers is said to be \textit{weakly $l$-sum-free} if it contains no pairwise distinct elements $x_1,\ldots,x_l,y$ satisfying
$$
x_1+\ldots+x_l=y.
$$
For every positive integer $k$, the \textit{generalized weak Schur number} $\WS(k,l)$ is the largest integer $n$ for which the set of the first $n$ positive integers $\{1,2,\ldots,n\}$ admits a $k$-partition into weakly $l$-sum-free sets.
\par For $l=2$, the numbers $\WS(k,2)$ are called \textit{weak Schur numbers}. The first few weak Schur numbers are given in Table~\ref{tab3}.
\begin{table}[!ht]
\begin{center}
\begin{tabular}{|c|c|c|c|c|c|c|}
\hline
$k$ & $1$ & $2$ & $3$ & $4$ & $5$ & $6$ \\
\hline
$\WS(k,2)$ & $2$ & $8$ & $23$ & $66$ & $\geq 196$ & $\geq 575$ \\
\hline
\end{tabular}
\end{center}
\caption{\label{tab3}The first few weak Schur numbers $\WS(k,2)$.}
\end{table}
\par The exact value of $\WS(4,2)$ was obtained by Blanchard, Harary and Reis \cite{Blanchard}. The lower bounds of $\WS(5,2)$ and $\WS(6,2)$ are due to Eliahou, Mar\'{i}n, Revuelta and Sanz \cite{Eliahou}.
\par More generally, the generalized weak Schur numbers are always finite (see \cite{Sierpinski,Irving,Landman} for instance). Moreover, the generalized weak Schur numbers appear as a good upper bound for the generalized Schur numbers, since a weakly $l$-sum-free set of integers is also $l$-sum-free. Therefore, we have
\begin{equation}\label{eq3}
\Sc(k,l)\leq \WS(k,l),
\end{equation}
for all positive integers $k$ and $l$. A trivial lower bound for the weak Schur numbers is
\begin{equation}\label{eq4}
kl\leq\WS(k,l),
\end{equation}
because each of the $k$ weakly sum-free sets can contain $l$ distinct integers without solution of the equation $x_1+\ldots+x_l=y$. Better lower bounds for $\WS(k,l)$ can be found in \cite{Sanz}.
\par A set of integers is said to be \textit{weakly $l$-sum-free modulo $m$} if it contains no pairwise distinct elements $x_1,\ldots,x_l,y$ satisfying
$$
x_1+\ldots+x_l\equiv y\pmod{m}.
$$
For every positive integer $k$, the \textit{generalized weak Schur number modulo $m$}, denoted by $\WS_m(k,l)$, is the largest integer $n$ for which the set of the first $n$ positive integers $\{1,2,\ldots,n\}$ admits a $k$-partition into weakly $l$-sum-free sets modulo $m$.
\par For every modulus $m$, the inequality
\begin{equation}\label{eq5}
\WS_m(k,l)\leq \WS(k,l)
\end{equation}
holds because a weakly $l$-sum-free set modulo $m$ of integers is also weakly $l$-sum-free.
\par Abbott and Wang conjectured that $\T(k)$ is equal to $\Sc(k,2)$, for all positive integer $k$. Here, in the weak case, it appears that considering similar numbers than $\T(k)$ is without great interest. Indeed, as we can see in Table~\ref{tab1} for $m\in\{1,\ldots,15\}$, the values of modular generalized weak Schur numbers, for $k=2$ and $l=2$, seem to be very difficult to predict. For $m\geq16$, we have $\WS_m(2,2)=\WS(2,2)$ because $\WS_m(2,2)\leq \WS(2,2) = 8$ and because, for two distinct integers $x,y\in\{1,\ldots,8\}$, we always have $x+y\leq 15 < m$.

\begin{table}[!ht]
\begin{center}
\begin{tabular}{|c|c|c|c|c|c|c|c|c|c|c|c|c|c|c|c|}
\hline
$m$ & $1$ & $2$ & $3$ & $4$ & $5$ & $6$ & $7$ & $8$ & $9$ & $10$ & $11$ & $12$ & $13$ & $14$ & $15$ \\
\hline
$\WS_m(2,2)$ & $4$ & $5$ & $4$ & $5$ & $6$ & $6$ & $7$ & $6$ & $7$ & $7$ & $7$ & $8$ & $8$ & $8$ & $8$ \\
\hline
\end{tabular}
\end{center}
\caption{\label{tab1}The modular generalized weak Schur numbers $\WS_m(2,2)$}
\end{table}

\par In this paper, we explicitly determine the modular generalized weak Schur numbers $\WS_m(k,l)$ for small values of $m$: for all moduli $m\in\{1,2,3\}$. For $m=1$, we obtain that
$$
\WS_1(k,l) = kl,
$$
for all $k\geq1$ and $l\geq1$, since a weakly $l$-sum-free set modulo $1$ has cardinality of at most $l$ because every $l+1$ distinct positive integers $x_1,\ldots,x_l,y$ verify $x_1+\ldots+x_l\equiv y\bmod{1}$. For $m=2$ and $m=3$ the following theorems will be proved in Section~\ref{secw2} and Section~\ref{secw3} respectively.

\begin{theorem}\label{thm3}
Let $k$ and $l$ be two positive integers. Then,
$$
\WS_2(k,l) = \left\{\begin{array}{lll}
l+1 & \text{for} & k=1\ \text{and}\ l\equiv 0,1\bmod{4},\\
l & \text{for} & k=1\ \text{and}\ l\equiv 2,3\bmod{4},\\
2(k-1)l+1 & \text{for} & k\geq2\ \text{and}\ l\ \text{even},\\
k(l+1) & \text{for} & \left\{\begin{array}{l}
k\geq2\ \text{and}\ l\equiv1\bmod{4},\\
k\geq2\ \text{even\ and}\ l\equiv3\bmod{4},\\
\end{array}\right.\\
k(l+1)-1 & \text{for} & k\geq3\ \text{odd\ and}\ l\equiv3\bmod{4}.
\end{array}\right.
$$
\end{theorem}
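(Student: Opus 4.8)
The plan is to reduce the whole problem to a statement about parities, since the congruence $x_1+\cdots+x_l\equiv y\pmod 2$ depends only on the residues of the integers modulo $2$. First I would prove the key lemma: if one selects any $l+1$ pairwise distinct elements of a set $S$, of which $a$ are odd, then they can be split into $l$ summands and one target with matching parity if and only if $a$ is even. Consequently $S$ is weakly $l$-sum-free modulo $2$ if and only if every $(l+1)$-element subset of $S$ contains an \emph{odd} number of odd integers. Because the number of odd elements occurring in an $(l+1)$-subset ranges over a set of consecutive integers (controlled by the numbers $p$ of odd and $q$ of even elements of $S$), this condition forces that set to be empty or a single odd value. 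This yields the structural trichotomy that is the backbone of the proof: $S$ is weakly $l$-sum-free modulo $2$ if and only if either (i) $|S|\le l$, or (ii) $|S|=l+1$ and $S$ has an odd number of odd integers, or (iii) $l$ is even and $S$ consists entirely of odd integers (of arbitrary size).

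With the trichotomy available, the case $k=1$ is immediate: $\{1,\dots,n\}$ contains an even integer as soon as $n\ge2$, so option (iii) is unavailable, and the largest admissible $n$ is $l+1$ when $\lceil(l+1)/2\rceil$ is odd and $l$ otherwise; reducing $\lceil(l+1)/2\rceil\bmod 2$ according to $l\bmod 4$ gives the first two lines. For $k\ge2$ with $l$ even, option (iii) provides an unbounded ``odd dump,'' so the bottleneck is placing the $E=\lfloor n/2\rfloor$ even integers; by the trichotomy any part containing an even integer has size $\le l+1$, hence holds at most $l$ evens. For the construction I would use $k-1$ all-even parts of size $l$ together with one all-odd part absorbing every odd integer, realizing $n=2(k-1)l+1$. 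For the matching upper bound I would argue that if some part is all-odd then the evens occupy at most $k-1$ parts, so $E\le(k-1)l$ and $n\le 2(k-1)l+1$; otherwise every part has size $\le l+1$, giving $n\le k(l+1)$, which is already $\le 2(k-1)l+1$ for $k\ge3$, while the case $k=2$ is excluded by parity (an $n=2l+2$ would force both parts to have size $l+1$ and odd odd-count, making the total number of odds even, against $\lceil n/2\rceil=l+1$ being odd).

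When $l$ is odd, option (iii) vanishes, so every part has size at most $l+1$ and therefore $n\le k(l+1)$ unconditionally; equality requires all $k$ parts to have size $l+1$ with an odd number of odds, whence the total number of odds is $\equiv k\pmod 2$. Comparing this with the forced value $\lceil k(l+1)/2\rceil=k(l+1)/2$ and writing $t=(l+1)/2$, the bound $n=k(l+1)$ is attainable exactly when $k(t-1)$ is even: this always holds when $t$ is odd (that is, $l\equiv1\bmod 4$) and, when $t$ is even (that is, $l\equiv3\bmod 4$), holds iff $k$ is even. In the attainable cases I would exhibit explicit partitions by distributing odds and evens in blocks (for instance $t$ odds and $t$ evens per part when $t$ is odd, and pairing parts carrying $t+1$ and $t-1$ odds when $t$ is even). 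In the single remaining case, $l\equiv3\bmod 4$ with $k$ odd, the parity obstruction rules out $k(l+1)$, and I would attain $k(l+1)-1$ using one unconstrained part of size $l$ together with $k-1$ size-$(l+1)$ parts of odd odd-count, again chosen so the global odd/even counts match.

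The main obstacle I expect is not any single step but the bookkeeping that links the two global counts: the number of odd integers in $\{1,\dots,n\}$ is forced to equal $\lceil n/2\rceil$, so every construction must respect this balance while simultaneously meeting the per-part parity constraint on odd-counts coming from the trichotomy. It is exactly the interaction of this balance with the residue $t\bmod 2$ that produces the split into $l\bmod 4$ classes and the solitary $-1$ correction, so the delicate part will be organizing these residue cases cleanly and verifying feasibility of the block constructions in each one.
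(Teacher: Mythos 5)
Your proposal is correct and takes essentially the same route as the paper: your trichotomy is precisely the content of the paper's Lemmas~\ref{lem1} and~\ref{lem2} (a weakly $l$-sum-free part meeting both residue classes has size at most $l+1$, an all-odd part is unbounded exactly when $l$ is even, and a size-$(l+1)$ part is weakly $l$-sum-free if and only if it contains an odd number of odd elements), and your upper-bound arguments (at most $l$ even elements per part, size bound $k(l+1)$, and the global parity obstruction when $k$ is odd and $l\equiv3\bmod{4}$) match the paper's Claims~\ref{claim3} and~\ref{claim2} and its $l$-even counting argument. The only cosmetic difference is in the lower bounds, where you give the extremal partitions directly in blocks of prescribed odd/even counts, while the paper builds them recursively via $\WS_2(k,l)\geq\WS_2(k-2,l)+2(l+1)$ by adjoining pairs of parts with odd-counts $1$ and $l$; both verify the same per-part parity condition.
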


\begin{theorem}\label{thm4}
Let $k$ and $l$ be two positive integers. Then,
$$
\WS_3(k,l) = \left\{\begin{array}{lll}
3k & \text{for} & k\geq1\ \text{and}\ l=1,\\
l & \text{for} & k=1\ \text{and}\ l\geq2,\\
2l+2 & \text{for} & k=2\ \text{and}\ l\geq2,\ l\equiv0,1,5\bmod{9},\\
2l+1 & \text{for} & k=2\ \text{and}\ \left\{\begin{array}{l}
l=3,\\
l\geq5,\ l\equiv2,3,4,6,7,8\bmod{9},\\
\end{array}\right.\\
2l & \text{for} & k=2\ \text{and}\ l\in\{2,4\},\\
3(k-2)l+2 & \text{for} & k\geq3\ \text{and}\ l\equiv0,2\bmod{3},\\
k(l+1) & \text{for} & \left\{\begin{array}{l}
k=3,\ k\geq5\ \text{and}\ l\geq2,\ l\equiv1\bmod{3},\\
k=4\ \text{and}\ l\geq2,\ l\equiv1,7\bmod{9},\\
\end{array}\right.\\
4l+3 & \text{for} & k=4\ \text{and}\ l\equiv4\bmod{9}.\\
\end{array}\right.
$$
\end{theorem}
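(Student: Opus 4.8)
The plan is to reduce everything to the three residue classes modulo $3$. For a finite set $S$ write $a$, $b$, $c$ for the number of its elements congruent to $0$, $1$, $2 \pmod 3$, so that $|S| = a+b+c$. The first step is a structural lemma characterizing weak $l$-sum-freeness modulo $3$ purely in terms of $(a,b,c)$ and $l$: if $|S| \le l$ then $S$ is trivially weakly $l$-sum-free, while if $|S| \ge l+1$ a forbidden configuration $x_1 + \dots + x_l \equiv y \pmod 3$ exists exactly when one can pick nonnegative $P \le a$, $Q \le b$, $R \le c$ with $P+Q+R = l+1$ such that the chosen $l+1$ elements contain one of residue $R - Q \pmod 3$. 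Indeed, among any $l+1$ distinct elements with residue counts $(P,Q,R)$ the total residue is $Q - R$, so designating $y$ of residue $\rho$ forces $\rho \equiv R - Q \pmod 3$; a valid $y$ exists iff that residue class is present. Unwinding the three possibilities for $R-Q$ gives three explicit arithmetic conditions, from which I would extract the three facts that drive the whole argument: a set of residue-$0$ integers is safe iff it has size at most $l$; a monochromatic residue-$1$ or residue-$2$ set is weakly $l$-sum-free modulo $3$ iff $l \not\equiv 1 \pmod 3$; and a part meeting all three classes is severely constrained.

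I would then separate the lower-bound (construction) and upper-bound (impossibility) halves, and inside each split into the regimes $l \equiv 0,2 \pmod 3$ and $l \equiv 1 \pmod 3$, since the corollaries behave oppositely there. For $l \equiv 0,2 \pmod 3$ the construction devotes one part entirely to residue-$1$ integers and one entirely to residue-$2$ integers (both safe at any size), and spreads the residue-$0$ integers $l$ at a time over the remaining $k-2$ parts; counting residues in $\{1,\dots,n\}$ shows this realizes $n = 3(k-2)l + 2$, and the rows $k=1,2$ are adjusted separately. For $l \equiv 1 \pmod 3$ no monochromatic part can be large, so each part is instead a mixture of residue-$1$ and residue-$2$ integers avoiding residue $0$, of size $l+1$ and with both residue counts $\equiv 1 \pmod 3$, which is safe by the lemma and yields $k(l+1)$; the deficient cases (notably $k=4$, $l \equiv 4 \bmod 9$, giving $4l+3 = k(l+1)-1$) lose one element because such a balanced mixture cannot be completed for every part.

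For the upper bounds I would run a joint counting argument: assign to each part its capacities for the three residue classes and show that any part mixing residue $0$ with residues $1$ or $2$ must forfeit capacity, so the total number of integers of $\{1,\dots,n+1\}$ that can be distributed stays strictly below $n+1$. The delicate point, and the main obstacle, is that these capacities are not independent across residues and depend on the exact residue counts modulo $3$; this is exactly where the modulus $9$ appears, since the supply of residue-$1$ and residue-$2$ integers in $\{1,\dots,n\}$ is about $n/3$ and must be partitioned into parts whose residue counts are themselves restricted modulo $3$, so feasibility turns on $l$ and $n$ modulo $9$. I expect essentially all the case distinctions in the statement — the $l \bmod 9$ trichotomy for $k=2$, the separation of $l\equiv 1,7$ from $l\equiv 4 \pmod 9$ for $k=4$, and the small-$k$ values $\WS_3(1,l)=l$, $\WS_3(k,1)=3k$, and the tiny exceptions at $l\in\{2,3,4\}$ — to emerge precisely from pinning down these congruence obstructions and checking the two boundary sizes $n$ and $n+1$ by hand, with the smallest $k$ and $l$ likely requiring direct verification.
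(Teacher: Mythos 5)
Your framework coincides with the paper's: project onto $\Zt$, characterize weak $l$-sum-freeness of a part through its residue counts, exhibit explicit partitions for the lower bounds, and extract congruence obstructions for the upper bounds. Your unified criterion is correct (among $l+1$ chosen elements with residue counts $(P,Q,R)$ the designated element $y$ must satisfy $y\equiv R-Q\bmod 3$, so a forbidden configuration exists iff that residue occurs among the chosen elements); it is equivalent to the paper's Lemma~\ref{lem4} together with Claim~\ref{claim8}, and your three corollaries are the paper's Lemma~\ref{lem3}. The construction for $k\geq3$, $l\equiv0,2\bmod3$ is exactly the paper's. However, your construction for $l\equiv1\bmod3$ is wrong as stated: if every part is ``a mixture of residue-$1$ and residue-$2$ integers avoiding residue $0$'', then none of the roughly $k(l+1)/3$ multiples of $3$ in $\{1,\ldots,k(l+1)\}$ is placed anywhere, so you have not partitioned $\{1,\ldots,k(l+1)\}$ at all. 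The correct construction uses parts meeting exactly \emph{two} residue classes with both multiplicities $\equiv1\bmod3$, most of which do contain residue $0$ --- e.g.\ for $k=3$ the parts with $(\m(0),\m(1),\m(2))$ equal to $(l,1,0)$, $(0,l,1)$, $(1,0,l)$ --- glued together by an induction that appends three such parts at a time, plus separate base constructions for $k=4,5,7$ in specific classes of $l$ modulo $9$.

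The larger problem is that nearly all of the actual content of the theorem is deferred with ``I expect \ldots\ to emerge''. Three mechanisms that your plan never identifies are indispensable. First, for $k=3$ and $l\equiv0,2\bmod3$ your capacity argument cannot work even in principle: with $n+1=3l+3$ the total capacity $k(l+1)$ equals $n+1$ exactly, so no counting bound ``stays strictly below $n+1$''; the paper must classify the two possible intersection patterns of the parts with $\Zt$ and use the sum condition of Lemma~\ref{lem4} to force $l\equiv1\bmod3$, a contradiction. Second, the exceptional value at $k=4$, $l\equiv4\bmod9$ is decided by counting, for each $x\in\Zt$, the number $n_x$ of parts meeting residue $x$: one shows $n_x\equiv\m_{\pi_3(S)}(x)\bmod3$, that $n_0=3$ and $n_1=n_2=4$ are forced, and that this contradicts $n_0+n_1+n_2=2k=8$; moreover one must then show this deficiency does \emph{not} propagate to $k=7$, $l\equiv4\bmod9$, which requires its own explicit seven-part construction (the paper's Claim~\ref{claim21}). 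Third, the whole $k=2$ analysis --- proving each part has size at most $l+1$, deriving the $\bmod\ 9$ trichotomy from the sum conditions, the separate construction showing $\WS_3(2,l)\geq2l+1$ for $l\geq9$, and the hand verification of $l\in\{2,3,4,6,7,8\}$ --- is compressed into ``adjusted separately''. These are not routine boundary checks; they are the proof, and without them the statement's case distinctions (in particular $2l$ versus $2l+1$ versus $2l+2$ at $k=2$, and $k(l+1)$ versus $4l+3$ at $k=4$) are not established.
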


\subsection{Contents}

This paper is organized as follows. A simple proof of Theorems~\ref{thm1} and \ref{thm2} is given in Section~2, which completely determines the exact value of the generalized Schur numbers modulo $2$ and $3$. In Section~3, several basic results on the projection of partitions of integers in $\Zm$, which will be useful in our proofs in the sequel, are introduced. The generalized weak Schur numbers modulo $2$ are obtained by proving Theorem~\ref{thm3} in Section~4 and the case $m=3$ is settled in Section~5 by proving Theorem~\ref{thm4}. Remark that Theorems~\ref{thm3} and \ref{thm4} are the most difficult results to prove in this paper, much more difficult than Theorems~\ref{thm1} and \ref{thm2} whose proofs, in Section~2, are direct. Finally, in Section~6, we discuss open problems.

\section{\texorpdfstring{$\Sc_m(k,l)$}{Sm(k,l)} for the moduli \texorpdfstring{$m=2$}{m=2} and \texorpdfstring{$m=3$}{m=3}}\label{secsm}

We begin this section by proving Theorem~\ref{thm1}, that is,
$$
\Sc_2(k,l) = \left\{\begin{array}{ll}
0 & \text{for}\ k\geq1\ \text{and}\ l\ \text{odd},\\
1 & \text{for}\ k\geq1\ \text{and}\ l\ \text{even}.\\
\end{array}\right.
$$

\begin{proof}[Proof of Theorem~\ref{thm1}]
Let $k$ and $l$ be two positive integers. As already remarked in Section~1, the inequality $\Sc_2(k,l)\leq 1$ holds. Moreover, since $\sum_{i=1}^{l}1=l$, it follows that the integer $1$ belongs to a sum-free set of integers modulo $2$ if and only if $l$ is even.
\end{proof}

We end with the proof of Theorem~\ref{thm2}, that is,
$$
\Sc_3(k,l) = \left\{\begin{array}{ll}
0 & \text{for}\ k\geq1\ \text{and}\ l\equiv 1 \bmod{3},\\
1 & \text{for}\ k=1\ \text{and}\ l\equiv 0,2 \bmod{3},\\
2 & \text{for}\ k\geq2\ \text{and}\ l\equiv 0,2 \bmod{3}.
\end{array}\right.
$$

\begin{proof}[Proof of Theorem~\ref{thm2}]
Let $k$ and $l$ be two positive integers. Since $\sum_{i=1}^{l}1=l$, it follows that the integer $1$ belongs to a $l$-sum-free set of integers modulo $3$ if and only if $l\equiv 0$ or $2\bmod{3}$. Thus, we have  $\Sc_3(k,l)=0$ when $l\equiv 1 \bmod{3}$. Since $\sum_{i=1}^{l}1=l\equiv 2\bmod{3}$ for $l\equiv 2\bmod{3}$ and $\sum_{i=1}^{l-1}1+2=l+1\equiv 1\bmod{3}$ for $l\equiv 0\bmod{3}$, it follows that the integers $1$ and $2$ cannot belong together to a $l$-sum-free set of integers modulo $3$ when $l\equiv 0$ or $2\bmod{3}$. Therefore $\Sc_3(1,l)=1$ for $k=1$ when $l\equiv 0$ or $2\bmod{3}$. Finally, if $k\geq2$ and $l\equiv 0$ or $2\bmod{3}$, then $\Sc_3(k,l)\leq 2$ as remarked in Section~1 and $\Sc_3(k,l)\geq 2$ because the sets $\{1\}$ and $\{2\}$ are both $l$-sum-free modulo $3$. This leads to the formula $\Sc_3(k,l)=2$ in this case.
\end{proof}

\section{Projective partitions in \texorpdfstring{$\Zm$}{Z/mZ}}

Throughout this paper, projections of partitions into $\Zm$ will be considered. Let
$$
\pi_m : \Z \longrightarrow \Zm
$$
denote the canonical projection map. Let $S$ be a set of integers and let $P=\{S_1,\ldots,S_k\}$ be a $k$-partition of $S$. Denote by $\pi_m(P)$ the projection of the partition $P$ of $S$, that is the partition $\pi_m(P)=\{\pi_m(S_1),\ldots,\pi_m(S_k)\}$ of the multiset $\pi_m(S)$ of $\Zm$.
\par For example, if we consider the partition
$$
P=\{\{1,2,4,8\},\{3,5,6,7\}\}
$$
of the set $S=\{1,\ldots,8\}$, then we obtain that
$$
\pi_2(P) = \{\{1,0,0,0\},\{1,1,0,1\}\}
$$
is a partition of the multiset $\pi_2(S)=\{0,0,0,0,1,1,1,1\}$ of $\Zd$.
\par As for sets of integers, a multiset of $\Zm$ is said to be (weakly) $l$-sum-free if it contains no (pairwise distinct) elements $x_1,\ldots,x_l,y$ satisfying $x_1+\ldots+x_m=y$ in $\Zm$.

\begin{proposition}
A set $S$ of integers is (weakly) $l$-sum-free modulo $m$ if and only if its projection $\pi_m(S)$ is a (weakly) $l$-sum-free multiset of $\Zm$.
\end{proposition}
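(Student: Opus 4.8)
The plan is to unwind the two definitions and observe that they encode one and the same condition, read once in $\Z$ modulo $m$ and once in $\Zm$, with the ring homomorphism $\pi_m$ supplying the translation between them. I would prove both implications at once by contraposition, treating the non-weak and weak cases in parallel. The single structural remark that makes everything work, and which I would state up front, is that the multiset $\pi_m(S)$ carries a canonical indexing by $S$ itself: each integer $s\in S$ contributes exactly one occurrence $\pi_m(s)$ to $\pi_m(S)$. This gives a bijection between the occurrences of the multiset $\pi_m(S)$ and the elements of $S$, under which ``pairwise distinct occurrences of $\pi_m(S)$'' corresponds precisely to ``pairwise distinct elements of $S$''. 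Since $\pi_m$ is a homomorphism, a congruence $x_1+\ldots+x_l\equiv y\pmod{m}$ holds in $\Z$ if and only if the identity $\pi_m(x_1)+\ldots+\pi_m(x_l)=\pi_m(y)$ holds in $\Zm$, so the arithmetic side of the equivalence is immediate.

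For the non-weak case, suppose $S$ is not $l$-sum-free modulo $m$, so there exist elements $x_1,\ldots,x_l,y\in S$ (repetitions allowed) with $x_1+\ldots+x_l\equiv y\pmod{m}$. Applying $\pi_m$, the occurrences $\pi_m(x_1),\ldots,\pi_m(x_l),\pi_m(y)$ of $\pi_m(S)$ satisfy the corresponding identity in $\Zm$, so $\pi_m(S)$ is not $l$-sum-free. Conversely, any witnessing occurrences in $\pi_m(S)$ lift, through the indexing by $S$, to elements of $S$ whose images satisfy the identity in $\Zm$, and hence satisfy the congruence modulo $m$. The weak case runs identically, with the sole added bookkeeping that the chosen $x_1,\ldots,x_l,y$ and the chosen occurrences are required to be pairwise distinct; here the bijection of the previous paragraph does all the work, matching pairwise distinct integers of $S$ with pairwise distinct occurrences of $\pi_m(S)$ in both directions.

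The only genuinely delicate point, and the thing I would be most careful to pin down, is the meaning of ``pairwise distinct elements of a multiset'' in the weak case: distinctness must be read at the level of \emph{occurrences} of $\pi_m(S)$ and not of their values in $\Zm$, since distinct integers of $S$ may well share a residue. It is exactly for this reason that $\pi_m(S)$ is taken to be a multiset rather than the plain set of residues attained, and getting this right is what prevents the weak equivalence from collapsing. Once the indexing of the multiset by $S$ is made explicit, the verification is routine and the proposition follows.
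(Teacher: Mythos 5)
Your proof is correct: the homomorphism property of $\pi_m$ handles the arithmetic, and the canonical indexing of the occurrences of $\pi_m(S)$ by the elements of $S$ is exactly the right device for transporting pairwise distinctness in the weak case. Note that the paper offers no proof of this proposition at all---it is stated as immediate from the definitions, followed only by an illustrative example in $\Zd$---so your write-up, in particular the occurrence-level (rather than value-level) reading of distinctness in the multiset $\pi_m(S)$, simply makes explicit the routine verification the paper leaves to the reader.
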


Remark that the partition $P$, of the previous example, is a partition of $S$ into weakly $2$-sum-free sets but it is not a partition of $S$ into weakly $2$-sum-free sets modulo $2$ since $\pi_2(P)$ is not a partition of $\pi_2(S)$ into weakly $2$-sum-free multisets in $\Zd$. Indeed, in each element of $\pi_2(P)$, we have three distinct elements satisfying either $0+0=0$ or $1+1=0$ in $\Zd$.

For every multiset $M$ of $\Zm$, denote by
$$
\m_M : \Zm \longrightarrow \N
$$
the multiplicity function associated with $M$, that is the function which assigns to each element $x\in\Zm$ its multiplicity in $M$. Let $|M|$ denote the cardinality of the multiset $M$, that is the number of elements constituting $M$, counted with multiplicity, that is
$$
|M|=\sum_{x\in\Zm}\m_M(x)\in\N.
$$

Obviously, by $\pi_n$, each $k$-partition of a set of $n$ positive integers can be associated uniquely with a $k$-partition of a multiset of $n$ terms in $\Zm$, counted with multiplicity. Conversely, this process is not bijective in general. Indeed, distinct partitions of the same set of integers can be projected on the same partition in $\Zm$.

\begin{proposition}
Let $m$ and $n$ be two positive integers and let $S=\{1,2,\ldots,n\}$. The exact number of $k$-partitions $P=\{S_1,\ldots,S_k\}$ of $S$ that have the same projective $k$-partition $\pi_m(P)=\{\pi_m(S_1),\ldots, \pi_m(S_k)\}$ is equal to
$$
\prod_{v=0}^{m-1} \prod_{u=1}^{k} \binom{\sum_{w=u}^{k}\m_{\pi_m(S_w)}(v)}{\m_{\pi_m(S_u)}(v)} = \prod_{v=0}^{m-1} \binom{\sum_{w=1}^{k}\m_{\pi_m(S_w)}(v)}{\m_{\pi_m(S_1)}(v),\m_{\pi_m(S_2)}(v),\ldots,\m_{\pi_m(S_k)}(v)},
$$
where $\binom{a}{b_1,b_2,\ldots,b_k}$ is the multinomial coefficient $\frac{a!}{{b_1}!{b_2}!\cdots{b_k}!}$.
\end{proposition}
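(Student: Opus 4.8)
The plan is to reduce the count to an independent distribution problem, one for each residue class modulo $m$, and then recognise the answer as a product of multinomial coefficients.

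First I would record what data the projective partition actually carries. For each residue $v\in\{0,1,\ldots,m-1\}$, set $R_v:=\{x\in S : x\equiv v\bmod m\}$, so that $S=\bigsqcup_{v=0}^{m-1}R_v$. By definition the multiplicity $\m_{\pi_m(S_u)}(v)$ is exactly the number of elements of $S_u$ lying in $R_v$, that is $\m_{\pi_m(S_u)}(v)=|S_u\cap R_v|$. Consequently, two $k$-partitions $P=\{S_1,\ldots,S_k\}$ and $P'=\{S_1',\ldots,S_k'\}$ of $S$ have the same projection $\pi_m(P)=\pi_m(P')$ if and only if $|S_u'\cap R_v|=\m_{\pi_m(S_u)}(v)$ for every index $u\in\{1,\ldots,k\}$ and every residue $v\in\{0,\ldots,m-1\}$ (the parts being labelled by their index, as in the definition of $\pi_m(P)$). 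This is the main point to verify, and the only real subtlety: the projection forgets everything about an integer except its residue, so all that the projective partition remembers is, part by part, how many integers of each residue it contains.

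Next I would exploit that the residue classes $R_0,\ldots,R_{m-1}$ are pairwise disjoint and that the constraints above decouple across $v$. Building a partition $P'$ with the prescribed projection amounts to choosing, independently for each $v$, an ordered partition of the set $R_v$ into labelled blocks of prescribed sizes $\m_{\pi_m(S_1)}(v),\ldots,\m_{\pi_m(S_k)}(v)$; which integer of residue $v$ is placed in which part involves no interaction with the other residue classes. Hence the total count factors as $\prod_{v=0}^{m-1}N_v$, where $N_v$ is the number of ways to split the $|R_v|=\sum_{w=1}^{k}\m_{\pi_m(S_w)}(v)$ distinct integers of $R_v$ into labelled blocks of the given sizes.

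Finally, $N_v$ is by definition the multinomial coefficient $\binom{|R_v|}{\m_{\pi_m(S_1)}(v),\ldots,\m_{\pi_m(S_k)}(v)}$, which already yields the right-hand product in the statement. To obtain the left-hand product I would apply the standard telescoping identity $\binom{a_1+\cdots+a_k}{a_1,\ldots,a_k}=\prod_{u=1}^{k}\binom{a_u+a_{u+1}+\cdots+a_k}{a_u}$, proved by choosing the $a_1$ elements of block $1$ from all $a_1+\cdots+a_k$ available elements, then the $a_2$ elements of block $2$ from the remaining $a_2+\cdots+a_k$, and so on. Applying this with $a_u=\m_{\pi_m(S_u)}(v)$ and multiplying over $v$ gives both displayed expressions. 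No estimate is involved here; the whole argument is bookkeeping, the one conceptual step being the equivalence established in the first paragraph.
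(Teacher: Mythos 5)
Your proof is correct and follows essentially the same approach as the paper: both arguments come down to the observation that the projection only records, part by part, how many integers of each residue class $v$ land in each $S_u$, so the count factors over residues and reduces to distributing the integers of each class into labelled blocks of prescribed sizes. The only cosmetic difference is that you arrive at the multinomial form first and recover the product of binomials by the telescoping identity, whereas the paper obtains that product directly by the same sequential-selection argument (choosing the elements of $S_1$, then $S_2$, and so on, within each residue class).
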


\begin{proof}
Consider the euclidean division of $n$ by $m$, that is $n=qm+r$. Let $\varepsilon$ be the function defined by $\varepsilon : \Zm \longrightarrow \{0,1\}$ where $\varepsilon(v)=1$ for $v\in\{1,\ldots,r\}$ and $\varepsilon(v)=0$ for $v\in\{0,r+1,\ldots,m-1\}$. Thus there is exactly $q+\varepsilon(v)$ integers in $\{1,\ldots,n\}$ whose residue class modulo $m$ is $v\in\Zm$. We proceed by induction on $u$. Suppose that we have already chosen the integers in the first $u-1$ sets of the $k$-partition $P$. For the set $S_u$, we have to choose, for every $v\in\Zm$, $\m_{\pi_m(S_u)}(v)$ integers among the remaining $q+\varepsilon(v)-\sum_{w=1}^{u-1}\m_{\pi_m(S_w)}(v)$ integers whose residue class modulo $m$ is $v$. This corresponds to the binomial coefficient
$$
\binom{q+\varepsilon(v)-\sum_{w=1}^{u-1}\m_{\pi_m(S_w)}(v)}{\m_{\pi_m(S_u)}(v)} = \binom{\sum_{w=u}^{k}\m_{\pi_m(S_w)}(v)}{\m_{\pi_m(S_u)}(v)}.
$$
This completes the proof.
\end{proof}

For example, the number of $2$-partitions $P$ of $\{1,\ldots,8\}$ whose projection in $\Zd$ is the partition $\pi_2(P)=\{\{1,0,0,0\},\{1,1,1,0\}\}$ is equal to $16$, since $(\binom{4}{3}\cdot\binom{4}{1})\cdot(\binom{4-3}{1}\cdot\binom{4-1}{3})=4\cdot4=16$. These $16$ partitions are given below.
$$
\begin{array}{ccc}
\{\{1,2,4,6\},\{3,5,7,8\}\}, & \{\{1,2,4,8\},\{3,5,7,6\}\}, & \{\{1,2,6,8\},\{3,5,7,4\}\}, \\
\{\{1,4,6,8\},\{3,5,7,2\}\}, & \{\{3,2,4,6\},\{1,5,7,8\}\}, & \{\{3,2,4,8\},\{1,5,7,6\}\}, \\
\{\{3,2,6,8\},\{1,5,7,4\}\}, & \{\{3,4,6,8\},\{1,5,7,2\}\}, & \{\{5,2,4,6\},\{1,3,7,8\}\}, \\
\{\{5,2,4,8\},\{1,3,7,6\}\}, & \{\{5,2,6,8\},\{1,3,7,4\}\}, & \{\{5,4,6,8\},\{1,3,7,2\}\}, \\
\{\{7,2,4,6\},\{1,3,5,8\}\}, & \{\{7,2,4,8\},\{1,3,5,6\}\}, & \{\{7,2,6,8\},\{1,3,5,4\}\}, \\
\{\{7,4,6,8\},\{1,3,5,2\}\}. \\
\end{array}
$$

\section{\texorpdfstring{$\WS_m(k,l)$}{WSm(k,l)} for the modulus \texorpdfstring{$m=2$}{m=2}}\label{secw2}

The goal of this section is to prove Theorem~\ref{thm3}, that is,
$$
\WS_2(k,l) = \left\{\begin{array}{ll}
l+1 & \text{for}\ k=1\ \text{and}\ l\equiv 0,1 \bmod{4},\\
l & \text{for}\ k=1\ \text{and}\ l\equiv 2,3 \bmod{4},\\
2(k-1)l+1 & \text{for}\ k\geq2\ \text{and}\ l\ \text{even},\\
k(l+1) & \text{for}\ \left\{\begin{array}{l}
k\geq2\ \text{and}\ l\equiv 1 \bmod{4},\\
k\geq2\ \text{even\ and}\ l\equiv 3 \bmod{4},\\
\end{array}\right.\\
k(l+1)-1 & \text{for}\ k\geq3\ \text{odd\ and}\ l\equiv 3 \bmod{4}.
\end{array}\right.
$$

The proof of Theorem~\ref{thm3} is based on the following two lemmas.

\begin{lemma}\label{lem1}
For any weakly $l$-sum-free multiset $M$ of $\Zd$, with $|M|\geq1$, we have
\begin{enumerate}
\item
$\m_M(0)\leq l$.
\item
$\m_M(1)\leq l$ if $l$ is odd.
\item
$|M|\leq l+1$ if $|M\cap\Zd|=2$.
\end{enumerate}
\end{lemma}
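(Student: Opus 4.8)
The plan is to work directly in $\Zd=\{0,1\}$, writing $a=\m_M(0)$ and $b=\m_M(1)$ so that $|M|=a+b$. Recall that weak $l$-sum-freeness of a multiset forbids choosing $l+1$ distinct occurrences $x_1,\dots,x_l,y$ from $M$ with $x_1+\dots+x_l=y$ in $\Zd$, and that over $\Zd$ the sum $x_1+\dots+x_l$ is simply the parity of the number of $1$'s among the $x_i$. Every violation I produce will therefore amount to an explicit choice of $l+1$ distinct occurrences with a prescribed number of $1$'s.

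For the first two assertions I would exhibit monochromatic violations. If $\m_M(0)\geq l+1$, selecting $l+1$ occurrences of $0$ gives $0+\dots+0=0$, so weak $l$-sum-freeness forces $\m_M(0)\leq l$, proving (1). Likewise, if $l$ is odd and $\m_M(1)\geq l+1$, selecting $l+1$ occurrences of $1$ gives $\sum_{i=1}^{l}1=l\equiv 1=y$ in $\Zd$, forcing $\m_M(1)\leq l$ and proving (2). Both are one-line contradictions.

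Assertion (3) is where the real work lies. Assuming $a\geq1$, $b\geq1$ and, for contradiction, $a+b\geq l+2$, I would first reduce the existence of a violation to a purely numerical condition: any selection of $l+1$ distinct occurrences containing an \emph{even} number $p$ of $1$'s already yields a violation. Indeed, if such a selection contains a $0$, designate it as $y$; the remaining $l$ elements still contain $p$ (even) ones, so their sum is $0=y$. If instead the selection is all $1$'s, then $p=l+1$ is even, so $l$ is odd, and designating any element as $y=1$ gives sum $l\equiv1=y$. It then remains to find an even integer $p$ with $\max(0,\,l+1-a)\leq p\leq\min(b,\,l+1)$, since $p$ ones and $l+1-p$ zeros must actually be available in $M$. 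I would check that $a+b\geq l+2$ makes this interval nonempty and of length at least $2$ — hence containing an even integer — by splitting into cases according to whether each of $a,b$ is below or above $l+1$; in the generic case $a,b\leq l+1$ the interval has length $a+b-l-1\geq1$, and the remaining cases are handled identically. Producing such a $p$ contradicts weak $l$-sum-freeness, so $a+b\leq l+1$.

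The main obstacle is the case analysis in (3): translating the combinatorial search for a violation into the single parity condition on $p$, and then verifying that the admissible interval for $p$ always contains an even integer. The degenerate all-$1$'s configuration, where no $0$ is available to play the role of $y$, is the one subtlety that must be handled separately, and it is precisely there that the parity of $l$ re-enters the argument.
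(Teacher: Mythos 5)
Your proof is correct; parts (1) and (2) coincide with the paper's one-line arguments, but for part (3) you take a genuinely different route. The paper's proof is a two-line trick: writing $M=\{0,1,x_1,\ldots,x_l\}$ of cardinality $l+2$, it reserves one occurrence of $0$ \emph{and} one occurrence of $1$ as candidate targets, so that whatever the value of $\sum_{i=1}^{l}x_i\in\Zd$, that value is present in $M\setminus\{x_1,\ldots,x_l\}$ as a distinct occurrence $y$ --- an immediate violation, with no case analysis. You instead fix the target residue in advance (a $0$, or a $1$ in the degenerate all-ones case), which reduces the problem to finding a selection containing an even number $p$ of ones, and then to checking that the interval $[\max(0,l+1-a),\min(b,l+1)]$ contains an even integer; this is where your case analysis on whether $a$ or $b$ exceeds $l+1$ lives. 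Both arguments are valid: the paper's two-sided reservation is precisely what makes your parity bookkeeping unnecessary, while in exchange your version handles every cardinality $|M|\geq l+2$ in one stroke (the paper tacitly restricts attention to a submultiset of size exactly $l+2$) and does not need parts (1)--(2) as input. One small caution: your remark that the non-generic cases are ``handled identically'' is not literally true --- when $a\geq l+2$ the interval contains $0$, and when $b\geq l+2$ it contains both $l$ and $l+1$, so an even $p$ exists there for a different (easier) reason than the length estimate $a+b-l-1\geq 1$ --- but the conclusion is unaffected.
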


\begin{proof}
\begin{enumerate}
\item[]
\item
Since $\sum_{i=1}^{l}0=0$, the multiset $M$ cannot contain more than $l$ terms $0\in\Zd$.
\item
Suppose that $l$ is odd, that is $\pi_2(l)=1$. Since $\sum_{i=1}^{l}1=\pi_2(l)=1$, the multiset $M$ cannot contain more than $l$ elements $1\in\Zd$.
\item
Suppose that $M=\{0,1,x_1,\ldots,x_l\}$ is a weakly $l$-sum-free multiset of $\Zd$ with cardinality $|M|=l+2$. Since the sum of the $l$ elements $\sum_{i=1}^{l}x_i$ is equal to either $0$ or $1$, which are both contained in $M\setminus\{x_1,\ldots,x_l\}$, we obtain a contradiction.
\end{enumerate}
\end{proof}

\begin{remark}
For $l$ even, a multiset only constituted by elements $1\in\Zd$, with any cardinality $|M|\geq1$, is always weakly $l$-sum free in $\Zd$ since $\sum_{i=1}^{l}1=\pi_2(l)=0$.
\end{remark}

\begin{lemma}\label{lem2}
Let $M=\{x_1,\ldots,x_{l+1}\}$ be a multiset of $\Zd$ with cardinality $|M|=l+1$. Then, $M$ is weakly $l$-sum-free if and only if $\sum_{i=1}^{l+1}x_i=1$.
\end{lemma}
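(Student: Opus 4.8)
The plan is to exploit the fact that over $\Zd$ the distinction between a summand and the target in a weak $l$-sum relation collapses, because $-1=1$ in characteristic $2$, so that the whole question is governed by a single global parity.

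First I would record the counting observation that underlies everything. A weak $l$-sum relation consists of $l+1$ pairwise distinct elements of the multiset, namely the $l$ summands $x_1,\ldots,x_l$ together with the target $y$; here ``distinct'' is understood in the multiset sense, as distinct instances counted with multiplicity (exactly as in the example following Proposition~3.1, where three instances of $0$ realise $0+0=0$). Since $|M|=l+1$ by hypothesis, any such relation must use every element of $M$ exactly once. Hence $M$ fails to be weakly $l$-sum-free precisely when its $l+1$ elements can be relabelled so that $x_1+\cdots+x_l=x_{l+1}$ in $\Zd$.

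Next I would translate this condition into a statement about the total sum $\sigma:=\sum_{i=1}^{l+1}x_i\in\Zd$. For the converse direction, if some relabelling satisfies $x_1+\cdots+x_l=x_{l+1}$, then $\sigma=(x_1+\cdots+x_l)+x_{l+1}=x_{l+1}+x_{l+1}=0$; therefore $\sigma=1$ rules out every relation and forces $M$ to be weakly $l$-sum-free. For the forward direction I would argue by contrapositive: if $\sigma=0$, then singling out any element, say $x_{l+1}$, as the target gives $x_1+\cdots+x_l=\sigma-x_{l+1}=x_{l+1}$ in $\Zd$, using $-x_{l+1}=x_{l+1}$. The $l+1$ distinct instances then form a genuine weak $l$-sum relation, so $M$ is not weakly $l$-sum-free. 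Combining the two directions yields that $M$ is weakly $l$-sum-free if and only if $\sigma=1$.

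There is no serious obstacle here, since the arithmetic is entirely that of $\Zd$. The only point requiring care is the bookkeeping of the pairwise-distinctness condition in the multiset setting, that is, confirming that a relation necessarily exhausts all of $M$ so that the single quantity $\sigma$ controls both implications simultaneously. Once this is in place, everything follows at once from the characteristic-$2$ identity $-y=y$, which makes the choice of target element immaterial.
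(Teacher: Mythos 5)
Your proof is correct and takes essentially the same approach as the paper: both exploit that a weak $l$-sum relation in a multiset of cardinality $l+1$ must use every element exactly once, and then use the identity $2x_j=0$ in $\Zd$ to convert the condition ``no element equals the sum of the others'' into the single condition $\sum_{i=1}^{l+1}x_i=1$. The paper writes this as one chain of equivalences while you split it into two implications, but the content is identical.
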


\begin{proof}
First, by definition, the multiset $M$ is weakly $l$-sum-free if and only if, for every $1\leq j\leq l+1$, the inequality $\sum_{i=1,i\neq j}^{l+1}x_i\neq x_j$ holds. Moreover, since
$$
\sum_{\stackrel{i=1}{i\neq j}}^{l+1}x_i \neq x_j \ \text{for\ all}\ 1\leq j\leq l+1 \quad \begin{array}[t]{l} \displaystyle\Longleftrightarrow \quad \sum_{i=1}^{l+1}x_i \neq 2x_j = 0 \ \text{for\ all}\ 1\leq j\leq l+1 \\[3ex] \displaystyle\Longleftrightarrow \quad \sum_{i=1}^{l+1}x_i=1,\end{array}
$$
the result follows.
\end{proof}

We are now ready to prove Theorem~\ref{thm3}, the main result of this section.

\subsection{For \texorpdfstring{$k=1$}{k=1}}\label{m2k1}

Let $k=1$ and let $l$ be a positive integer. We will prove that
$$
\WS_2(1,l) = \left\{\begin{array}{ll}
l+1 & \text{for}\ l\equiv 0,1 \bmod{4},\\
l & \text{for}\ l\equiv 2,3 \bmod{4}.\\
\end{array}\right.
$$

\begin{claim}
$\WS_2(1,l) \in \{ l, l+1\}$ for all positive integers $l$.
\end{claim}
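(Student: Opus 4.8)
The plan is to split the claim into the two bounds $\WS_2(1,l)\ge l$ and $\WS_2(1,l)\le l+1$, which together give the stated range. Recall that for $k=1$ the number $\WS_2(1,l)$ is simply the largest $n$ for which the single set $\{1,\ldots,n\}$ is itself weakly $l$-sum-free modulo $2$, so both bounds are statements about when this particular set does or does not satisfy the defining condition.

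For the lower bound I would argue purely by cardinality. The set $\{1,\ldots,l\}$ contains only $l$ elements, so it cannot contain $l+1$ pairwise distinct integers $x_1,\ldots,x_l,y$; the forbidden configuration is therefore vacuous, and $\{1,\ldots,l\}$ is weakly $l$-sum-free modulo $2$ (indeed modulo every $m$). Hence $\WS_2(1,l)\ge l$.

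For the upper bound I would show that $\{1,\ldots,l+2\}$ already fails to be weakly $l$-sum-free modulo $2$. Passing to $\Zd$ (a set is weakly $l$-sum-free modulo $2$ if and only if its projection is a weakly $l$-sum-free multiset of $\Zd$), it suffices to show that $M=\pi_2(\{1,\ldots,l+2\})$ is not weakly $l$-sum-free. This multiset has $|M|=l+2$, and since $l\ge1$ the range $\{1,\ldots,l+2\}$ contains both the odd integer $1$ and the even integer $2$, so both residue classes occur, i.e. $|M\cap\Zd|=2$. Were $M$ weakly $l$-sum-free, part (3) of Lemma~\ref{lem1} would force $|M|\le l+1$, contradicting $|M|=l+2$. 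Thus $M$, and with it $\{1,\ldots,l+2\}$, is not weakly $l$-sum-free modulo $2$, giving $\WS_2(1,l)\le l+1$.

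I expect no real obstacle in the claim itself: both bounds reduce to a size count together with a single invocation of Lemma~\ref{lem1}(3). The genuine difficulty is deferred to the step that follows, where one must decide for each residue of $l$ modulo $4$ whether the extremal value is $l$ or $l+1$. That question is exactly whether the borderline set $\{1,\ldots,l+1\}$ is weakly $l$-sum-free modulo $2$, and by Lemma~\ref{lem2} it reduces to testing whether $\sum_{i=1}^{l+1}i=\binom{l+2}{2}$ projects to $1$ in $\Zd$, i.e. to the parity of $\binom{l+2}{2}$. I anticipate that this parity computation, governed by $l\bmod 4$, rather than anything in the present claim, will be the crux of the full determination.
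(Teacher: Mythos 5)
Your proof is correct and follows essentially the same route as the paper: the lower bound comes from the vacuous observation that a set of $l$ elements cannot contain $l+1$ pairwise distinct elements, and the upper bound comes from Lemma~\ref{lem1}(3) applied to the projection of $\{1,\ldots,l+2\}$, whose two residue classes you (rightly, and slightly more explicitly than the paper) verify both occur. Your closing remark about the crux lying in the parity of $\binom{l+2}{2}$ via Lemma~\ref{lem2} also matches exactly how the paper resolves the dichotomy in the next claim.
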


\begin{proof}
Since a multiset of $l$ elements in $\Zd$ is always weakly $l$-sum-free, the inequality $\WS_2(1,l)\geq l$ holds. Moreover, from Lemma~\ref{lem1}, we know that a weakly $l$-sum-free multiset $M$ such that $|M\cap\Zd|=2$ has cardinality of at most $l+1$. Therefore $\WS_2(1,l)\leq l+1$.
\end{proof}

\begin{claim}
Let $S=\{1,2,\ldots,l+1\}$. Then, the multiset $\pi_2(S)$ is weakly $l$-sum-free if and only if $l\equiv 0$ or $1\bmod{4}$.
\end{claim}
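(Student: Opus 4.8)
The plan is to apply Lemma~\ref{lem2} directly. Since $S=\{1,2,\ldots,l+1\}$ has exactly $l+1$ elements, its projection $\pi_2(S)$ is a multiset of $\Zd$ of cardinality $l+1$, which is precisely the situation Lemma~\ref{lem2} addresses. That lemma tells us that $\pi_2(S)$ is weakly $l$-sum-free if and only if the sum of its $l+1$ elements equals $1$ in $\Zd$. So the entire claim reduces to evaluating this sum.

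First I would use that $\pi_2$ is additive, so the sum of the elements of $\pi_2(S)$ is simply $\pi_2\!\left(\sum_{x=1}^{l+1} x\right)=\pi_2\!\left(\frac{(l+1)(l+2)}{2}\right)$, the residue modulo $2$ of the $(l+1)$-th triangular number. Thus the problem becomes purely a question about the parity of $\frac{(l+1)(l+2)}{2}$.

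Then I would split into the four residue classes of $l$ modulo $4$, writing $l=4t+j$ for $j\in\{0,1,2,3\}$, and factor $\frac{(l+1)(l+2)}{2}$ in each case. For $j=0$ one gets $(4t+1)(2t+1)$ and for $j=1$ one gets $(2t+1)(4t+3)$, each a product of two odd integers, hence odd; for $j=2$ one gets $(4t+3)\cdot 2(t+1)$ and for $j=3$ one gets $2(t+1)(4t+5)$, each carrying a factor of $2$, hence even. Consequently the sum equals $1$ in $\Zd$ exactly when $l\equiv 0,1\bmod 4$, and by Lemma~\ref{lem2} this is precisely when $\pi_2(S)$ is weakly $l$-sum-free, establishing the claim.

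There is no serious obstacle here: the only substance is recognizing that Lemma~\ref{lem2} applies verbatim because $|\pi_2(S)|=l+1$, after which everything collapses to a single parity computation. The mild bookkeeping lies in the four-case analysis of the triangular number, but this is entirely routine and could even be phrased in one line by noting that $\frac{(l+1)(l+2)}{2}$ is odd iff $l+1\equiv 1,2\bmod 4$, i.e.\ iff $l\equiv 0,1\bmod 4$.
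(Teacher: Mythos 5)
Your proof is correct and follows exactly the paper's own argument: apply Lemma~\ref{lem2} to the multiset $\pi_2(S)$ of cardinality $l+1$, reduce the claim to the parity of the triangular number $\frac{(l+1)(l+2)}{2}$, and check that it is odd precisely when $l\equiv 0,1\bmod 4$. The only difference is that you spell out the four residue cases that the paper leaves as an unstated verification.
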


\begin{proof}
By Lemma~\ref{lem2}, the multiset $\pi_2(S)$ is weakly $l$-sum-free if and only if $\sum_{x\in S}x \equiv 1 \bmod{2}$. Moreover, since
$$
\sum_{x\in S}x = \sum_{x=1}^{l+1}x = \frac{(l+1)(l+2)}{2} \equiv 1 \pmod{2}
$$
if and only if $l\equiv 0$ or $1\bmod{4}$, the result follows.
\end{proof}

\subsection{For \texorpdfstring{$k\geq2$}{k>=2} and \texorpdfstring{$l$}{l} even}

Let $k\geq 2$ and $l$ be two positive integers, with $l$ even. We will prove that
$$
\WS_2(k,l)=2(k-1)l+1.
$$

\begin{claim}
$\WS_2(k,l) \leq 2(k-1)l+1$ for $k\geq 2$ and $l$ even.
\end{claim}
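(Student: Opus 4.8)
The plan is to fix an arbitrary $k$-partition $P=\{S_1,\ldots,S_k\}$ of $\{1,\ldots,n\}$ into weakly $l$-sum-free sets modulo $2$ and bound $n$ from above; since $\WS_2(k,l)$ is the largest such $n$, this yields the claim. First I would pass to projections: by the projection Proposition each $\pi_2(S_i)$ is a weakly $l$-sum-free multiset of $\Zd$, so I can work entirely with the multiplicities $a_i=\m_{\pi_2(S_i)}(0)$ and $b_i=\m_{\pi_2(S_i)}(1)$, i.e. the numbers of even and odd integers in $S_i$. The two global identities I would record are $\sum_{i=1}^{k}a_i=\lfloor n/2\rfloor$ and $\sum_{i=1}^{k}b_i=\lceil n/2\rceil$, so that the number of odd integers exceeds the number of even integers by at most $1$. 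The whole argument is then driven by the single parameter $s=\#\{i:a_i\ge 1\}$, the number of parts containing at least one even integer, and I would split into the cases $s\le k-1$ and $s=k$.

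In the case $s\le k-1$ I would apply Lemma~\ref{lem1}(1), which gives $a_i\le l$ for every $i$, so the number of even integers satisfies $\sum_i a_i\le sl\le(k-1)l$. Combining this with the fact that the odds outnumber the evens by at most one, I would conclude
$$
n=\sum_{i=1}^{k}(a_i+b_i)\le 2\sum_{i=1}^{k}a_i+1\le 2(k-1)l+1 .
$$
In the case $s=k$ every part contains an even integer, hence each $\pi_2(S_i)$ is either purely even or contains both residues; Lemma~\ref{lem1}(1) and~(3) then force $|S_i|\le l+1$ in either situation, giving $n\le k(l+1)$. A one-line computation (using $l\ge 2$, since $l$ is even) shows $k(l+1)\le 2(k-1)l+1$ as soon as $k\ge 3$, so this case is already finished for all $k\ge 3$.

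The hard part will be the boundary case $k=2$ with $s=2$, where the crude size bound only yields $n\le 2l+2$ and is off by exactly one; this is precisely where I expect Lemma~\ref{lem1} to be insufficient and Lemma~\ref{lem2} to become essential. To close it I would argue by contradiction, assuming $n=2l+2$. Then both parts have size at most $l+1$ and total size $2l+2$, so each part has size exactly $l+1$; by Lemma~\ref{lem2} each projection $\pi_2(S_i)$ must sum to $1$ in $\Zd$, which means each $b_i$ is odd, and therefore $b_1+b_2$ is even. But $b_1+b_2=\lceil n/2\rceil=l+1$ is odd because $l$ is even, a contradiction. This forces $n\le 2l+1=2(k-1)l+1$ for $k=2$ as well, completing the bound. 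The only genuine subtlety, and the step I would watch most carefully, is this parity argument: it is what distinguishes the even-$l$ regime and what makes Lemma~\ref{lem2} indispensable beyond the counting supplied by Lemma~\ref{lem1}.
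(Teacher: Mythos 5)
Your proof is correct and takes essentially the same approach as the paper: Lemma~\ref{lem1} supplies the cardinality bounds, counting handles $k\geq 3$, and Lemma~\ref{lem2} yields the parity contradiction at the boundary case $k=2$, $n=2l+2$. Your case split on $s$ is just the contrapositive of the paper's pigeonhole step (at $n=2(k-1)l+2$ every part must contain an even integer), and your observation that $b_1+b_2$ would be both even and equal to the odd number $l+1$ is exactly the paper's computation of $\sum_{x\in\pi_2(S)}x=\pi_2((l+1)(2l+3))$ in different notation.
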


\begin{proof}
Let $S=\{1,2,\ldots,2(k-1)l+2\}$. Suppose that there exists a partition $P=\{S_1,\ldots,S_k\}$ of the multiset $\pi_2(S)$ into $k$ weakly $l$-sum-free multisets of $\Zd$. So the multiplicity function of $\pi_2(S)$ is defined by $\m_{\pi_2(S)}(0)=\m_{\pi_2(S)}(1)=(k-1)l+1$. Since $\m_{S_i}(0)\leq l$ for every $1\leq i\leq k$ by Lemma~\ref{lem1} and $\sum_{i=1}^{k}\m_{S_i}(0)=\m_{\pi_2(S)}(0)=l(k-1)+1$, it follows that $\m_{S_i}(0)\geq 1$ for every $1\leq i\leq k$ by the pigeonhole principle. It follows that $|S_i|\leq l+1$ for every $1\leq i\leq k$ by Lemma~\ref{lem1} again and thus we obtain the following upper bound of the cardinality of $\pi_2(S)$,
$$
|\pi_2(S)| = \sum_{i=1}^{k}|S_i| \leq k(l+1).
$$
Since
$$
(2(k-1)l+2)-k(l+1) = 2kl-2l+2-kl-k = k(l-1)-2(l-1) = (k-2)(l-1),
$$
we obtain that $2(k-1)l+2>k(l+1)$ for all $l\geq2$ and $k\geq3$, in contradiction with the previous inequality. For $k=2$, we have $\m_{\pi_2(S)}(0)=\m_{\pi_2(S)}(1)=l+1$ and $|S_1|=|S_2|=l+1$. It follows from Lemma~\ref{lem2} that $\sum_{x_1\in S_1}x_1=\sum_{x_2\in S_2}x_2=1$ and the contradiction comes from the following equality
$$
0 = 1+1 = \sum_{x_1\in S_1}x_1 + \sum_{x_2\in S_2}x_2 = \sum_{x\in \pi_2(S)}x = \pi_2\left(\sum_{x=1}^{2l+2}x\right) = \pi_2((l+1)(2l+3)) \stackrel{l\ \text{even}}{=} 1.
$$
This completes the proof.
\end{proof}

\begin{claim}
$\WS_2(k,l) \geq 2(k-1)l+1$ for $k\geq 2$ and $l$ even.
\end{claim}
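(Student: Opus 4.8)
The plan is to construct an explicit $k$-partition of $S=\{1,2,\ldots,2(k-1)l+1\}$ into weakly $l$-sum-free sets modulo $2$; exhibiting one such partition immediately gives $\WS_2(k,l)\geq 2(k-1)l+1$. Rather than argue directly with the integers, I would work at the level of the projection and rely on the Proposition stating that a set is weakly $l$-sum-free modulo $2$ exactly when its projected multiset in $\Zd$ is weakly $l$-sum-free. Since $n=2(k-1)l+1$ is odd, $S$ contains $(k-1)l+1$ odd integers and $(k-1)l$ even integers, so $\m_{\pi_2(S)}(1)=(k-1)l+1$ and $\m_{\pi_2(S)}(0)=(k-1)l$.

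The key point is that, for $l$ even, the two residue classes are constrained very asymmetrically. By the Remark following Lemma~\ref{lem1}, a multiset made up solely of copies of $1\in\Zd$ is weakly $l$-sum-free no matter how large it is; by part~(1) of Lemma~\ref{lem1}, a multiset of copies of $0\in\Zd$ is weakly $l$-sum-free precisely when it contains at most $l$ of them. This dictates the construction: let $S_1$ be the set of all $(k-1)l+1$ odd integers of $S$, and partition the $(k-1)l$ even integers of $S$ into the $k-1$ remaining parts $S_2,\ldots,S_k$ with exactly $l$ of them in each. Then $\pi_2(S_1)$ consists of $(k-1)l+1$ copies of $1$ and is weakly $l$-sum-free because $l$ is even, while for $j\geq 2$ the multiset $\pi_2(S_j)$ consists of exactly $l$ copies of $0$ and is weakly $l$-sum-free since one cannot select $l+1$ pairwise distinct instances out of only $l$. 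By the Proposition, each $S_i$ is therefore weakly $l$-sum-free modulo $2$, which is the claim.

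I expect no serious obstacle here: the only thing to verify is the arithmetic that the number $(k-1)l$ of available zeros equals $l$ times the number $k-1$ of parts not reserved for the ones, so the zeros fit exactly, $l$ per part, with no slack. This exact fit is precisely why the bound cannot be improved — the integer $2(k-1)l+2$ would introduce one more zero than can be accommodated, which is exactly what the preceding upper-bound claim exploits.
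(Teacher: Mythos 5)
Your construction is exactly the one in the paper: all $(k-1)l+1$ odd integers go into one part (whose projection is all $1$'s, weakly $l$-sum-free since $l$ is even), and the $(k-1)l$ even integers are split into $k-1$ parts of exactly $l$ elements each, which are trivially weakly $l$-sum-free; the paper just records this directly via multiplicity functions in $\Zd$. The proposal is correct and essentially identical to the paper's proof.
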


\begin{proof}
Let $S=\{1,2,\ldots,2(k-1)l+1\}$. We will exhibit a partition $P=\{S_1,\ldots,S_k\}$ of $\pi_2(S)$ into $k$ weakly $l$-sum-free multisets of $\Zd$. Only the multiplicities of the elements constituting the multisets $S_i$ are reported here.
\begin{center}
\begin{tabular}{|c|c|c|c|}
\hline
$M$ & $\m_{M}(0)$ & $\m_{M}(1)$ & $|M|$ \\
\hline\hline
$S_1$ & $0$ & $(k-1)l+1$ & $(k-1)l+1$ \\
\hline
$S_2,\ldots,S_k$ & $l$ & $0$ & $l$ \\
\hline\hline
$\pi_2(S)$ & $(k-1)l$ & $(k-1)l+1$ & $2(k-1)l+1$ \\
\hline
\end{tabular}
\end{center}
First, $P$ is a partition of $\pi_2(S)$ since the multiplicity functions verify that $\sum_{i=1}^{k}\m_{S_i}(0)=\m_{\pi_2(S)}(0)=l(k-1)$ and $\sum_{i=1}^{k}\m_{S_i}(1)=\m_{\pi_2(S)}(1)=l(k-1)+1$. The multiset $S_1$ is weakly $l$-sum-free because, as already remarked above, a multiset which is only constituted by elements $1\in\Zd$ is always weakly $l$-sum-free when $l$ is even. For the other multisets $S_2,\ldots,S_k$, we already know that multisets containing only $l$ elements are always weakly $l$-sum-free. This completes the proof.
\end{proof}

\subsection{For \texorpdfstring{$k\geq2$}{k>=2} and \texorpdfstring{$l$}{l} odd}

Let $k\geq 2$ and $l$ be two positive integers, with $l$ odd. We will prove that
$$
\WS_2(k,l) = \left\{
\begin{array}{ll}
k(l+1) & \text{for} \ \left\{
\begin{array}{l}
k\ \text{even\ and}\ l\ \text{odd},\\
k\ \text{odd\ and}\ l\equiv1\bmod{4},
\end{array}
\right.\\
k(l+1)-1 & \text{for}\ k\ \text{odd\ and}\ l\equiv3\bmod{4}.
\end{array}
\right.
$$

\begin{claim}\label{claim3}
$\WS_2(k,l)\leq k(l+1)$ for $k\geq1$ and $l$ odd.
\end{claim}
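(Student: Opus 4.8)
The plan is to bound the cardinality of each block of the partition by $l+1$ and then sum over the $k$ blocks. By the first Proposition of Section~3, a $k$-partition of $S=\{1,2,\ldots,n\}$ into weakly $l$-sum-free sets modulo $2$ projects, under $\pi_2$, to a partition $\{\pi_2(S_1),\ldots,\pi_2(S_k)\}$ of the multiset $\pi_2(S)$ into weakly $l$-sum-free multisets of $\Zd$. Since projection preserves cardinality with multiplicity, $|\pi_2(S)|=n$, so it suffices to show that any such projected partition satisfies $\sum_{i=1}^{k}|\pi_2(S_i)|\leq k(l+1)$.

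First I would record that, because $l$ is odd, Lemma~\ref{lem1} controls \emph{both} multiplicities of any weakly $l$-sum-free multiset $M$ of $\Zd$: part~1 gives $\m_M(0)\leq l$, and part~2 (which requires $l$ odd) gives $\m_M(1)\leq l$. This is exactly where the oddness hypothesis enters; for even $l$ the multiplicity of $1$ is unbounded, which is why the even case has a different answer.

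The core step is a case split on the support of $M$. If $M$ is supported on a single residue class, then $|M|$ equals either $\m_M(0)$ or $\m_M(1)$, and both are at most $l$ by the two bounds above, so $|M|\leq l\leq l+1$. If instead $M$ contains both residues, that is $|M\cap\Zd|=2$, then part~3 of Lemma~\ref{lem1} applies directly and gives $|M|\leq l+1$. In every case $|M|\leq l+1$, a uniform bound on each block.

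Applying this to each $\pi_2(S_i)$ and summing yields $n=|\pi_2(S)|=\sum_{i=1}^{k}|\pi_2(S_i)|\leq k(l+1)$, hence $\WS_2(k,l)\leq k(l+1)$. There is no substantial obstacle here: the claim reduces entirely to the observation that, for odd $l$, the two multiplicity bounds from Lemma~\ref{lem1} force each weakly $l$-sum-free block to have size at most $l+1$ regardless of its composition. The only point requiring care is that the single-residue case must be treated separately, since part~3 of the lemma presupposes that both residues occur.
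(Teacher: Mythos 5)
Your proof is correct and takes essentially the same approach as the paper: the paper's entire proof of this claim is ``Directly follows from Lemma~\ref{lem1}'', and your argument---bounding each projected block by $l+1$ via parts 1--2 of the lemma in the single-residue case and part 3 in the two-residue case, then summing over the $k$ blocks---is exactly the spelled-out version of that one-line proof. The care you take with the single-residue case (where part 3 does not apply and the oddness of $l$ is needed for $\m_M(1)\leq l$) is precisely what the paper leaves implicit.
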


\begin{proof}
Directly follows from Lemma~\ref{lem1}.
\end{proof}

\begin{claim}\label{claim2}
$\WS_2(k,l)\leq k(l+1)-1$ for $k$ odd and $l\equiv3\bmod{4}$.
\end{claim}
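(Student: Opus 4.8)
The plan is to combine the counting bound of Claim~\ref{claim3} with a parity obstruction coming from Lemma~\ref{lem2}, arguing by contradiction. Suppose that the value $k(l+1)$ is attained, i.e.\ that for $S=\{1,2,\ldots,k(l+1)\}$ the multiset $\pi_2(S)$ admits a partition $P=\{S_1,\ldots,S_k\}$ into $k$ weakly $l$-sum-free multisets of $\Zd$. I will show this is impossible when $k$ is odd and $l\equiv3\bmod4$.

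First I would argue that, in such a partition, every block must have cardinality exactly $l+1$. Since $l$ is odd, parts~1 and~2 of Lemma~\ref{lem1} give $\m_{S_i}(0)\leq l$ and $\m_{S_i}(1)\leq l$, so any block supported on a single residue class has at most $l$ elements, while any block meeting both classes has $|S_i|\leq l+1$ by part~3. Hence $|S_i|\leq l+1$ in every case. As $\sum_{i=1}^k|S_i|=|\pi_2(S)|=k(l+1)$, equality must hold throughout, so $|S_i|=l+1$ for all $i$ (and in particular each block meets both residue classes, since $l+1$ copies of a single residue would violate Lemma~\ref{lem1}).

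It then follows from Lemma~\ref{lem2} that $\sum_{x\in S_i}x=1$ in $\Zd$ for each $i$. Summing over the $k$ blocks and using that $k$ is odd yields $\sum_{x\in\pi_2(S)}x=k\cdot1=1$ in $\Zd$. On the other hand, this same sum equals $\pi_2\!\left(\sum_{x=1}^{k(l+1)}x\right)=\pi_2\!\left(\tfrac{n(n+1)}{2}\right)$ with $n=k(l+1)$. Here the hypothesis $l\equiv3\bmod4$ enters decisively: it gives $l+1\equiv0\bmod4$, so $4\mid n$, whence $n/2$ is even and $\tfrac{n(n+1)}{2}$ is even, i.e.\ the sum is $0$ in $\Zd$. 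Comparing the two evaluations gives $0=1$, the desired contradiction.

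The only step that requires genuine care is the first one: the bound $|S_i|\leq l+1$ together with the exact count $|\pi_2(S)|=k(l+1)$ forces every block to be maximal, and this rigidity is precisely what makes Lemma~\ref{lem2} applicable to every block simultaneously. Once this structural fact is in hand, the contradiction is a short parity computation, in which the stronger congruence $l\equiv3\bmod4$ (rather than merely $l$ odd) is exactly what is needed to push $\tfrac{n(n+1)}{2}$ into the even residue class and clash with the oddness of $k$.
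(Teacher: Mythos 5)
Your proposal is correct and follows essentially the same route as the paper: bound each block by $l+1$ via Lemma~\ref{lem1}, use the exact count $|\pi_2(S)|=k(l+1)$ to force every block to have cardinality $l+1$, apply Lemma~\ref{lem2} to get block sums equal to $1$, and derive the contradiction by computing $\pi_2\bigl(\sum_{x=1}^{k(l+1)}x\bigr)=0$ from $l\equiv3\bmod4$ versus $k$ odd. The only cosmetic difference is that you evaluate the triangular number as $\bigl(\tfrac{n}{2}\bigr)(n+1)$ with $4\mid n$, while the paper writes it as $\tfrac{l+1}{2}k(k(l+1)+1)$ with $\tfrac{l+1}{2}$ even; these are the same parity computation.
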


\begin{proof}
Let $S=\{1,2,\ldots,k(l+1)\}$. Suppose that there exists a partition $P=\{S_1,\ldots,S_k\}$ of $\pi_2(S)$ into $k$ weakly $l$-sum-free multisets of $\Zd$. For every $1\leq i\leq k$, since $S_i$ is a weakly $l$-sum-free multiset and since $l$ is odd, we know from Lemma~\ref{lem1} that $|S_i|\leq l+1$. Moreover, we have $\sum_{i=1}^{k}|S_i|=|\pi_2(S)|=k(l+1)$. Therefore, for all $1\leq i\leq k$, the multiset $S_i$ has cardinality of $|\pi_2(S_i)|=l+1$ and thus $\sum_{x\in S_i}x=1$ by Lemma~\ref{lem2}. Since $l\equiv3\bmod{4}$, this leads to
$$
\pi_2(k) = \sum_{i=1}^{k}1 = \sum_{i=1}^{k}\sum_{x\in S_i}x = \sum_{x\in\pi_2(S)}x = \pi_2\left(\sum_{i=1}^{k(l+1)}i\right) = \pi_2\left(\frac{l+1}{2}k(k(l+1)+1)\right) = 0,
$$
in contradiction with the hypothesis that $k$ is odd. This completes the proof.
\end{proof}

\begin{claim}\label{claim1}
$\WS_2(k,l)\geq \WS_2(k-2,l)+2(l+1)$ for $k\geq3$ and $l$ odd.
\end{claim}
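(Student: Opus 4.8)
The plan is to prove this lower bound constructively, by starting from an optimal witness for $\WS_2(k-2,l)$ and appending a block of $2(l+1)$ consecutive integers split among the two extra classes. Concretely, set $N=\WS_2(k-2,l)$ (well defined since $k\geq3$ gives $k-2\geq1$) and fix a $(k-2)$-partition $\{S_1,\ldots,S_{k-2}\}$ of $\{1,\ldots,N\}$ into weakly $l$-sum-free sets modulo $2$. I would then distribute the integers of the block $B=\{N+1,\ldots,N+2(l+1)\}$ into two new classes $S_{k-1}$ and $S_k$, and argue that $\{S_1,\ldots,S_k\}$ is a $k$-partition of $\{1,\ldots,N+2(l+1)\}$ into weakly $l$-sum-free sets modulo $2$, which yields $\WS_2(k,l)\geq N+2(l+1)$.

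Since the first $k-2$ classes are untouched, they remain weakly $l$-sum-free modulo $2$, so the whole argument reduces to partitioning the block $B$ into two weakly $l$-sum-free sets modulo $2$. Passing to $\Zd$ (a set is weakly $l$-sum-free modulo $2$ if and only if its projection is a weakly $l$-sum-free multiset), this becomes a statement about the multiset $\pi_2(B)$, which consists of exactly $l+1$ copies of $0$ and $l+1$ copies of $1$, since any $2(l+1)$ consecutive integers contain $l+1$ of each parity. Because $l$ is odd, Lemma~\ref{lem1} forces every weakly $l$-sum-free multiset of $\Zd$ to have cardinality at most $l+1$; hence, to cover all $2(l+1)$ elements with two classes, both projected multisets $\pi_2(S_{k-1})$ and $\pi_2(S_k)$ must have cardinality exactly $l+1$. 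By Lemma~\ref{lem2}, such a class is weakly $l$-sum-free precisely when the sum of its elements equals $1$ in $\Zd$, that is, when it contains an odd number of copies of $1$.

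The key step is the parity bookkeeping that makes this split feasible. If $\pi_2(S_{k-1})$ contains $a$ copies of $1$ and $\pi_2(S_k)$ contains $b$, then $a+b=l+1$, which is even because $l$ is odd; hence $a$ and $b$ can be chosen simultaneously odd, for instance $a=1$ and $b=l$, filling the remaining slots with $0$'s so that each class has size exactly $l+1$. I would check that this choice respects the bounds of Lemma~\ref{lem1} (here $\m_{S_{k-1}}(0)=l\leq l$, $\m_{S_{k-1}}(1)=1\leq l$ and symmetrically for $S_k$) and that, by Lemma~\ref{lem2}, each class sums to $1$ in $\Zd$ and is therefore weakly $l$-sum-free modulo $2$.

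I do not anticipate a serious obstacle: once the problem is projected to $\Zd$, the content is entirely the parity computation ``$a+b=l+1$ even, hence both summands can be taken odd'', combined with Lemmas~\ref{lem1} and \ref{lem2}. The only point to state carefully is that the two appended classes are forced to have cardinality exactly $l+1$, so that Lemma~\ref{lem2} applies verbatim and no integer of $B$ is left unplaced.
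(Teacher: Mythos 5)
Your proposal is correct and follows essentially the same route as the paper: extend an optimal $(k-2)$-partition by two new classes whose projections in $\Zd$ have multiplicities $(\m(0),\m(1))=(l,1)$ and $(1,l)$, and verify via Lemma~\ref{lem2} that each sums to $1$ (using that $l$ is odd), hence is weakly $l$-sum-free. The paper states the construction directly at the multiset level while you motivate it through the parity count $a+b=l+1$, but the resulting partition and the lemmas invoked are identical.
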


\begin{proof}
Let $S=\{1,2,\ldots,\WS_2(k-2,l)\}$ and let $P=\{S_1,\ldots,S_{k-2}\}$ be a partition of $\pi_2(S)$ into $k$ weakly $l$-sum-free multisets of $\Zd$. Consider the multisets $S_{k-1}$ and $S_k$ of $\Zd$ defined below by their multiplicity functions.
\begin{center}
\begin{tabular}{|c|c|c|c|}
\hline
$M$ & $\m_M(0)$ & $\m_M(1)$ & $|M|$ \\
\hline\hline
$S_{k-1}$ & $l$ & $1$ & $l+1$ \\
\hline
$S_{k}$ & $1$ & $l$ & $l+1$ \\
\hline\hline
$S_{k-1}\cup S_{k}$ & $l+1$ & $l+1$ & $2(l+1)$ \\
\hline
\end{tabular}
\end{center}
Then $P'=\{S_1,\ldots,S_k\}$ is a partition of the multiset $\pi_2(S')=\pi_2(\{1,2,\ldots,\WS_2(k-2,l)+2(l+1)\})$ since $\sum_{i=1}^{k}\m_{S_i}(x)=\m_{\pi_2(S)}(x)+(l+1)=\m_{\pi_2(S')}(x)$ for all $x\in\Zd$. Moreover, since $l$ is odd, it follows that $\sum_{x\in S_{k-1}}x = \sum_{x\in S_{k}}=1$ and thus the multisets $S_{k-1}$ and $S_k$ are weakly $l$-sum-free in $\Zd$ by Lemma~\ref{lem2}. Therefore $P'$ is a partition of $\pi_2(S')$ into $k$ weakly $l$-sum-free multisets.
\end{proof}

\begin{claim}\label{claim11}
$\WS_2(2,l) \geq 2(l+1)$ for $l$ odd.
\end{claim}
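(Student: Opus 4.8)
The plan is to reduce the statement to a projective one and then reuse, as a base case, the very construction that appears in the proof of Claim~\ref{claim1}. By the projection correspondence of Section~3, it suffices to exhibit a $2$-partition of the multiset $\pi_2(S)$, where $S=\{1,2,\ldots,2(l+1)\}$, into two weakly $l$-sum-free multisets of $\Zd$. First I would record the multiplicity function of $\pi_2(S)$: among the $2l+2$ integers of $S$ exactly $l+1$ are odd and $l+1$ are even, so that $\m_{\pi_2(S)}(0)=\m_{\pi_2(S)}(1)=l+1$.

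Next I would take the two multisets $S_1,S_2$ of $\Zd$ given by the multiplicities below, which are precisely the pair $S_{k-1},S_k$ used in Claim~\ref{claim1}.
\begin{center}
\begin{tabular}{|c|c|c|c|}
\hline
$M$ & $\m_M(0)$ & $\m_M(1)$ & $|M|$ \\
\hline\hline
$S_{1}$ & $l$ & $1$ & $l+1$ \\
\hline
$S_{2}$ & $1$ & $l$ & $l+1$ \\
\hline
\end{tabular}
\end{center}
These form a partition of $\pi_2(S)$, since $\m_{S_1}(0)+\m_{S_2}(0)=l+1$ and $\m_{S_1}(1)+\m_{S_2}(1)=l+1$ match the multiplicities computed above. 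To finish, I would check weak $l$-sum-freeness: both $S_1$ and $S_2$ have cardinality $l+1$, so by Lemma~\ref{lem2} it is enough to verify that the element sum of each equals $1$ in $\Zd$. Indeed $\sum_{x\in S_1}x=1$, and $\sum_{x\in S_2}x=l=1$ because $l$ is odd. Hence $\{S_1,S_2\}$ is a partition of $\pi_2(S)$ into two weakly $l$-sum-free multisets, which yields $\WS_2(2,l)\geq 2(l+1)$.

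There is no serious obstacle here: the statement is exactly the base case complementing the inductive step of Claim~\ref{claim1}, and the only point requiring care is that the multiplicities of $\pi_2(S)$ are balanced, with $l+1$ zeros and $l+1$ ones, since this is what allows the two ``half'' multisets above to tile it. The essential input is Lemma~\ref{lem2}, which converts the weak $l$-sum-freeness of a cardinality-$(l+1)$ multiset into the single parity condition that its element sum be $1$; the oddness of $l$ is used precisely to make $\sum_{x\in S_2}x=l$ odd, so that both sums equal $1$.
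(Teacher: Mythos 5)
Your proposal is correct and follows essentially the same route as the paper: the identical partition of $\pi_2(\{1,\ldots,2(l+1)\})$ into the multisets with multiplicities $(\m(0),\m(1))=(l,1)$ and $(1,l)$, verified to be weakly $l$-sum-free via Lemma~\ref{lem2} using the oddness of $l$. The only difference is that you spell out the verification explicitly, whereas the paper simply refers back to the computation in the proof of Claim~\ref{claim1}.
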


\begin{proof}
Let $S=\{1,2,\ldots,2(l+1)\}$. As already seen in the proof of Claim~\ref{claim1}, the following partition $P=\{S_1,S_2\}$ of $\pi_2(S)$ is weakly $l$-sum-free.
\begin{center}
\begin{tabular}{|c|c|c|c|}
\hline
$M$ & $\m_M(0)$ & $\m_M(1)$ & $|M|$ \\
\hline\hline
$S_{1}$ & $l$ & $1$ & $l+1$ \\
\hline
$S_{2}$ & $1$ & $l$ & $l+1$ \\
\hline\hline
$S_{1}\cup S_{2}$ & $l+1$ & $l+1$ & $2(l+1)$ \\
\hline
\end{tabular}
\end{center}
This concludes the proof.
\end{proof}

We are now ready to prove the formula for all $k\geq 2$ and $l$ odd. We distinguish different cases depending on the parity of $k$ and the residue class of $l$ modulo $4$.\\[2ex]
\textbf{Case 1:} for $k\geq2$ odd and $l\equiv 1\bmod{4}$.\\
First, we know that $\WS_2(1,l)=l+1$ from Subsection~\ref{m2k1}. By applying $(k-1)/2$ times the inequality of Claim~\ref{claim1}, we obtain that
$$
\WS_2(k,l) \geq \WS_2(k-2,l) + 2(l+1) \geq \cdots \geq \WS_2(1,l) + (k-1)(l+1) = k(l+1).
$$
Finally, since $\WS_2(k,l)\leq k(l+1)$ by Claim~\ref{claim3}, it follows that $\WS_2(k,l)=k(l+1)$ in this case.\\[2ex]
\textbf{Case 2:} for $k\geq2$ odd and $l\equiv 3\bmod{4}$.\\
First, we know that $\WS_2(1,l)=l$ from Subsection~\ref{m2k1}. By applying $(k-1)/2$ times the inequality of Claim~\ref{claim1}, we obtain that
$$
\WS_2(k,l) \geq \WS_2(k-2,l) + 2(l+1) \geq \cdots \geq \WS_2(1,l) + (k-1)(l+1) = k(l+1)-1.
$$
Finally, since $\WS_2(k,l)\leq k(l+1)-1$ by Claim~\ref{claim2}, it follows that $\WS_2(k,l)=k(l+1)-1$ in this case.\\[2ex]
\textbf{Case 3:} for $k\geq2$ even and $l$ odd.\\
First, we know that $\WS_2(2,l)\geq 2(l+1)$ from Claim~\ref{claim11}. By applying $k/2-1$ times the inequality of Claim~\ref{claim1}, we obtain that
$$
\WS_2(k,l) \geq \WS_2(k-2,l) + 2(l+1) \geq \cdots \geq \WS_2(2,l) + (k-2)(l+1) \geq k(l+1),
$$
Finally, since $\WS_2(k,l)\leq k(l+1)$ by Claim~\ref{claim3}, it follows that $\WS_2(k,l)=k(l+1)$ in this case.\\[2ex]
This concludes the proof of Theorem~\ref{thm3}.

\section{\texorpdfstring{$\WS_m(k,l)$}{WSm(k,l)} for the modulus \texorpdfstring{$m=3$}{m=3}}\label{secw3}

The goal of this section is to prove Theorem~\ref{thm4}, that is,
$$
\WS_3(k,l) = \left\{\begin{array}{ll}
3k & \text{for}\ k\geq1\ \text{and}\ l=1,\\
l & \text{for}\ k=1\ \text{and}\ l\geq2,\\
2l+2 & \text{for}\ k=2\ \text{and}\ l\geq2,\ l\equiv 0,1,5 \bmod{9},\\
2l+1 & \text{for}\ k=2\ \text{and}\ \left\{\begin{array}{l}
l=3,\\
l\geq 5,\ l\equiv 2,3,4,6,7,8 \bmod{9},\\
\end{array}\right.\\
2l & \text{for}\ k=2\ \text{and}\ l\in\{2,4\},\\
3(k-2)l+2 & \text{for}\ k\geq3\ \text{and}\ l\equiv 0,2 \bmod{3},\\
k(l+1) & \text{for}\ \left\{\begin{array}{l}
k=3,\ k\geq 5\ \text{and}\ l\geq 2,\ l\equiv 1 \bmod{3},\\
k=4\ \text{and}\ l\geq 2,\ l\equiv 1,7 \bmod{9},\\
\end{array}\right.\\
4l+3 & \text{for}\ k=4\ \text{and}\ l\equiv 4 \bmod{9}.\\
\end{array}\right.
$$

The proof of Theorem~\ref{thm4} is based upon the following two lemmas.

\begin{lemma}\label{lem3}
For any weakly $l$-sum-free multiset $M$ of $\Zt$, with  $|M|\geq1$ and $l\geq2$, we have
\begin{enumerate}
\item
$\m_M(0)\leq l$.
\item
$\m_M(1)\leq l$ and $\m_M(2)\leq l$ if $l\equiv1\bmod{3}$.
\item
$|M|\leq l+1$ if $|M\cap\Zt|=2$.
\item
$|M|\leq l$ if $|M\cap\Zt|=3$.
\end{enumerate}
\end{lemma}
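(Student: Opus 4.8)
The plan is to prove the four parts separately, obtaining the first two from \emph{pure} sums (all summands equal) and the last two from a single reformulation of what a weak $l$-sum violation looks like in $\Zt$. Parts (1) and (2) are immediate. For (1), the $l$ summands all equal to $0\in\Zt$ have sum $0$, so $l+1$ copies of $0$ already furnish a violation with $y=0$; hence $\m_M(0)\leq l$. For (2), when $l\equiv1\bmod3$ one computes $\underbrace{1+\cdots+1}_{l}=l\equiv1$ and $\underbrace{2+\cdots+2}_{l}=2l\equiv2$ in $\Zt$, so $l+1$ copies of $1$ (respectively of $2$) give a violation with $y=1$ (respectively $y=2$); hence $\m_M(1)\leq l$ and $\m_M(2)\leq l$.

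For (3) and (4) I would first record the following reformulation. Writing $\sigma(C)=\sum_{x\in C}x\in\Zt$ for the total of a sub-multiset $C$, and using that $2$ is its own inverse modulo $3$, a choice of $l+1$ distinct copies $x_1,\dots,x_l,y$ with $x_1+\cdots+x_l=y$ is exactly a size-$(l+1)$ sub-multiset $C$ together with a distinguished copy $y\in C$ of value $-\sigma(C)=2\sigma(C)$. Indeed $\sigma(C)=2y$ forces $y=-\sigma(C)$, and conversely if some copy of value $-\sigma(C)$ lies in $C$ then the remaining $l$ copies sum to $\sigma(C)-(-\sigma(C))=2\sigma(C)=-\sigma(C)$. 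Thus $M$ fails to be weakly $l$-sum-free precisely when it contains a size-$(l+1)$ sub-multiset $C$ with $\m_C(-\sigma(C))\geq1$.

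Part (4) then falls out quickly. If $|M\cap\Zt|=3$ and $|M|\geq l+1$, then, since $l\geq2$ gives $l+1\geq3$, I can extract a sub-multiset $C\subseteq M$ of size $l+1$ that still meets all three residues (take one copy of each and fill up with $l-2$ further copies). As $-\sigma(C)\in\{0,1,2\}=\mathrm{supp}(C)$, the value $-\sigma(C)$ is automatically present in $C$, so $C$ is a violation and $M$ is not weakly $l$-sum-free. Contrapositively $|M|\leq l$.

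For part (3), with support $\{a,b\}$ and $|M|\geq l+2$, I would parametrise the candidate sub-multisets $C$ by the number $j$ of copies of $a$ used (so $l+1-j$ copies of $b$), giving $\sigma(C)=(l+1)b+j(a-b)$. Because $a-b$ is a unit modulo $3$, the target $-\sigma(C)$ runs through all residues as $j$ runs through three consecutive admissible values, and one checks from $p,q\geq1$ and $p+q\geq l+2$ (where $p=\m_M(a)$, $q=\m_M(b)$) that the admissible range for $j$ has length at least $2$; hence one can steer $-\sigma(C)$ into $\{a,b\}$. The one delicate point, which I expect to be the main obstacle, is the \emph{presence} requirement: steering $-\sigma(C)$ to, say, $a$ is useless if the chosen $C$ contains no copy of $a$, i.e.\ if $j=0$. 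This can fail only at the two extreme compositions $j=0$ and $j=l+1$; but there the adjacent admissible composition shifts $-\sigma(C)$ by the unit $-(a-b)$ onto the \emph{complementary} support element, which is present precisely because that neighbouring composition is not pure. A short case check on whether $j=0$ or $j=l+1$ is forced, combined with this adjacency shift, therefore always produces a genuine violation, yielding $|M|\leq l+1$ and completing the lemma.
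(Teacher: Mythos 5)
Your proof is correct, and for parts (3) and (4) it takes a genuinely different route from the paper's. Parts (1) and (2) coincide with the paper (pure sums of $l+1$ equal copies). For (3) and (4) the paper works by hand: it reserves two or three specific copies (in (3), $|M|=l+2\geq4$ forces some value to have multiplicity at least $2$, and the two support elements differ by a unit; in (4), one copy of each residue) and then checks the three possible residues of the partial sum of the remaining copies, writing down an explicit violation in each case. You instead isolate a violation criterion --- a size-$(l+1)$ sub-multiset $C$ is a violation if and only if the value $2\sigma(C)=-\sigma(C)$ occurs in $C$ --- which is precisely the algebra behind the paper's Lemma~\ref{lem4} (stated there only for support size $2$, proved after Lemma~\ref{lem3} and not used in its proof), extended to arbitrary multisets; you then derive both parts from it, effectively inverting the paper's logical order. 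The payoff is a very clean part (4): when the support is all of $\Zt$, presence of $-\sigma(C)$ is automatic. The cost is in part (3), where your steering-plus-adjacency argument is more elaborate than the paper's three-case check, but it is sound: two adjacent admissible compositions give distinct values of $-\sigma$, so at least one lands in $\{a,b\}$; presence can fail only at a pure composition ($j=0$ with target $a$, or $j=l+1$ with target $b$), and there the neighbouring composition's target is forced onto the complementary element, which it contains --- e.g.\ if $-(l+1)b=a$ then $-(a+lb)=a-a+b=b$. One wording slip, not a gap: ``the admissible range for $j$ has length at least $2$'' should read ``contains at least two values of $j$'' (when $p+q=l+2$ with $p,q\leq l$ there are exactly two admissible compositions), but two adjacent compositions are all your steering step needs.
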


\begin{proof}
\begin{enumerate}
\item[]
\item
Since $\sum_{i=1}^{l}0=0$, the multiset $M$ cannot contain more than $l$ terms $0\in\Zt$.
\item
Suppose that $l\equiv1\bmod{3}$. Since $\sum_{i=1}^{l}1=\pi_3(l)=1$ and $\sum_{i=1}^{l}2=\pi_3(2l)=2$, the multiset $M$ can contain neither more than $l$ elements $1\in\Zt$ nor more than $l$ elements $2\in\Zt$.
\item
Supppose that $|M|=l+2$ with $M\cap\Zt=\{x,y\}$. Since $l+2\geq4$, we can suppose that $\m_M(x)\geq2$. Moreover, either $x=y+1$ or $x=y+2$. Without loss of generality, suppose that $x=y+1$ and denote $M=\{x,x,y,x_1,x_2,\ldots,x_{l-1}\}$. Then,
\begin{itemize}
\item
$x+\sum_{i=1}^{l-1}x_i=x$ if $\sum_{i=1}^{l-1}x_i=0$,
\item
$y+\sum_{i=1}^{l-1}x_i=x$ if $\sum_{i=1}^{l-1}x_i=1$,
\item
$x+\sum_{i=1}^{l-1}x_i=y$ if $\sum_{i=1}^{l-1}x_i=2$.
\end{itemize}
Therefore $M$ is not weakly $l$-sum-free in $\Zt$.
\item
Suppose that $|M|=l+1$ with $M\cap\Zt=\Zt$. Denote $M=\{0,1,2,x_1,\ldots,x_{l-2}\}$. Then,
\begin{itemize}
\item
$\sum_{i=1}^{l-2}x_i+1+2=0$ if $\sum_{i=1}^{l-2}x_i=0$,
\item
$\sum_{i=1}^{l-2}x_i+1+0=2$ if $\sum_{i=1}^{l-2}x_i=1$,
\item
$\sum_{i=1}^{l-2}x_i+2+0=1$ if $\sum_{i=1}^{l-2}x_i=2$.
\end{itemize}
Therefore $M$ is not weakly $l$-sum-free in $\Zt$.
\end{enumerate}
\end{proof}

\begin{lemma}\label{lem4}
Let $l\geq2$ and let $M=\{x_1,\ldots,x_{l+1}\}$ be a multiset of $\Zt$ with $|M|=l+1$ and $|M\cap\Zt|=2$. Let $x$ be the element of $\Zt$ such that $x\not\in M$. Then, the multiset $M$ is weakly $l$-sum-free if and only if $\sum_{i=1}^{l+1}x_i=2x$.
\end{lemma}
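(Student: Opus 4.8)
Let me analyze Lemma~\ref{lem4}. We have a multiset $M=\{x_1,\ldots,x_{l+1}\}$ of $\Zt$ with exactly two distinct residues appearing, and $x$ is the missing residue. The plan is to characterize when $M$ is weakly $l$-sum-free in terms of the sum $\sum_{i=1}^{l+1}x_i$. The condition ``weakly $l$-sum-free'' means there are no pairwise distinct indices giving a sum relation; but here the crucial subtlety is that the $x_i$ are elements of a multiset, so ``pairwise distinct'' refers to distinct positions, not distinct values. Since $|M|=l+1$, any weak $l$-sum relation must use exactly $l$ of the $l+1$ elements on the left and the remaining one on the right. So $M$ fails to be weakly $l$-sum-free precisely when, for some index $j$, we have $\sum_{i\neq j}x_i = x_j$ in $\Zt$.

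**The key reduction.**
First I would rewrite the failure condition. For each $j$, $\sum_{i\neq j}x_i=x_j$ is equivalent to $\bigl(\sum_{i=1}^{l+1}x_i\bigr)-x_j=x_j$, i.e. $\sum_{i=1}^{l+1}x_i=2x_j$ in $\Zt$. Let $\sigma=\sum_{i=1}^{l+1}x_i$. Then $M$ is \emph{not} weakly $l$-sum-free if and only if $\sigma=2x_j$ for some $j$, i.e. if and only if $\sigma\in\{2x_i : 1\leq i\leq l+1\}=\{2y : y\in M\cap\Zt\}$. Since the doubling map $y\mapsto 2y$ is a bijection on $\Zt$ (as $\gcd(2,3)=1$), and since $M\cap\Zt$ consists of exactly the two residues other than $x$, the set $\{2y : y\in M\cap\Zt\}$ is exactly $\Zt\setminus\{2x\}$. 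Therefore $M$ is weakly $l$-sum-free iff $\sigma\notin\Zt\setminus\{2x\}$, which is precisely $\sigma=2x$. This is the claimed equivalence.

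**Organizing the write-up.**
I would present this as: (1) observe that because $|M|=l+1$, the only possible weak sum relations are of the form ``$l$ elements summing to the remaining one,'' reducing the problem to checking $\sum_{i\neq j}x_i=x_j$; (2) rewrite each such relation as $\sigma=2x_j$; (3) use that $y\mapsto 2y$ permutes $\Zt$ to conclude $\{2x_j\}=\Zt\setminus\{2x\}$; (4) conclude. The main point requiring care is step~(1): I must justify that ``pairwise distinct $x_1,\ldots,x_l,y$'' in the weak-sum-free definition translates, for a multiset of size exactly $l+1$, to choosing $l$ distinct positions for the summands and the one leftover position as $y$ — so the left side always uses all elements but one. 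No relation can use fewer than all $l+1$ positions on the left-plus-right, because the left alone needs $l$ distinct positions and $y$ needs one more, totaling $l+1=|M|$.

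**Anticipated obstacle.**
The genuinely delicate step is the bookkeeping around distinctness in the multiset setting: I must be clear that ``pairwise distinct elements'' in the definition refers to distinct \emph{positions} in the multiset (so repeated values are allowed as long as they occupy different slots). Once that is pinned down, the algebra is immediate. I expect the hypothesis $|M\cap\Zt|=2$ to enter only through step~(3), where it guarantees the two present residues double onto exactly the two elements of $\Zt\setminus\{2x\}$; and the hypothesis $l\geq2$ ensures $M$ genuinely has two summand-slots so the reduction in step~(1) is nonvacuous. I do not foresee any case analysis being needed beyond noting the bijectivity of doubling mod $3$.
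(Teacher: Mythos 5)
Your proposal is correct and follows essentially the same route as the paper: reduce the weak $l$-sum-free condition (for $|M|=l+1$) to $\sum_{i\neq j}x_i\neq x_j$ for all $j$, rewrite each relation as $\sum_{i=1}^{l+1}x_i\neq 2x_j$, and conclude via the fact that doubling permutes $\Zt$, so the forbidden values are exactly $\Zt\setminus\{2x\}$. The only difference is presentational: you spell out the position-distinctness bookkeeping and the bijectivity of $y\mapsto 2y$, which the paper leaves implicit in its final equivalence.
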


\begin{proof}
First, by definition, the multiset $M$ is weakly $l$-sum-free if and only if, for every $1\leq j\leq l+1$, the inequality $\sum_{i=1,i\neq j}^{l+1}x_i\neq x_j$ holds. Moreover, since
$$
\sum_{\stackrel{i=1}{i\neq j}}^{l+1}x_i \neq x_j \ \text{for\ all}\ 1\leq j\leq l+1 \quad \begin{array}[t]{l} \displaystyle\Longleftrightarrow \quad \sum_{i=1}^{l+1}x_i \neq 2x_j \ \text{for\ all}\ 1\leq j\leq l+1 \\[3ex] \displaystyle\Longleftrightarrow \quad \sum_{i=1}^{l+1}x_i=2x,\end{array}
$$
the result follows.
\end{proof}

We are now ready to prove Theorem~\ref{thm4}, the main result of this section.

\subsection{For \texorpdfstring{$k=1$}{k=1} and \texorpdfstring{$l=1$}{l=1}}

\begin{claim}
$\WS_3(k,1)=3k$ for $k\geq1$.
\end{claim}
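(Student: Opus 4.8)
The plan is to first unravel what the weakly $1$-sum-free condition modulo $3$ actually says, because for $l=1$ it simplifies dramatically. The forbidden configuration is a pair of pairwise distinct elements $x_1,y$ satisfying the single congruence $x_1\equiv y\bmod 3$; hence a set of integers is weakly $1$-sum-free modulo $3$ precisely when no two of its distinct elements share the same residue class in $\Zt$. Equivalently, by the Proposition of Section~3, its projection $\pi_3(S)$ must be a genuine subset of $\Zt$ with no repeated residue. Consequently any such set has at most $|\Zt|=3$ elements, one from each residue class.

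For the upper bound I would use this cardinality restriction directly. Any $k$-partition of $\{1,\ldots,n\}$ into weakly $1$-sum-free sets modulo $3$ has every part of size at most $3$, so $n\leq 3k$; in particular $\{1,\ldots,3k+1\}$, having $3k+1$ elements, cannot be partitioned in this way, which gives $\WS_3(k,1)\leq 3k$. Note that Lemmas~\ref{lem3} and~\ref{lem4} are stated only for $l\geq2$ and so cannot be invoked here (indeed part~4 of Lemma~\ref{lem3} would read $|M|\leq l=1$, which is false for $l=1$); this is exactly why the case $l=1$ must be handled separately, but the argument needed is entirely elementary.

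For the matching lower bound I would exhibit an explicit partition of $S=\{1,\ldots,3k\}$. Each residue class mod $3$ contains exactly $k$ of these integers, and the natural blocking $S_i=\{3i-2,3i-1,3i\}$ for $1\leq i\leq k$ assigns to each part one integer of each residue $1$, $2$, $0$. Since the three residues occurring in any $S_i$ are pairwise distinct, every $S_i$ is weakly $1$-sum-free modulo $3$, and $P=\{S_1,\ldots,S_k\}$ is a genuine $k$-partition of $S$. This yields $\WS_3(k,1)\geq 3k$, and combining the two bounds gives the claimed equality.

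There is essentially no obstacle here once the $l=1$ condition is recognized as ``all residues distinct'': both bounds then follow immediately from a one-line counting argument. The only points requiring care are to avoid appealing to the $l\geq2$ lemmas and to confirm the count of residue classes in $\{1,\ldots,3k\}$, both of which are routine.
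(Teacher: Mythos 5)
Your proof is correct and follows essentially the same route as the paper: the upper bound comes from observing that a weakly $1$-sum-free set modulo $3$ can contain at most one element per residue class (hence at most $3$ elements), and the lower bound from partitioning $\{1,\ldots,3k\}$ into $k$ blocks each containing one element of each residue class, which is exactly the partition the paper describes via its projection $S_i=\{0,1,2\}$ in $\Zt$. Your added remark that Lemmas~\ref{lem3} and~\ref{lem4} (stated only for $l\geq2$) must be avoided here is a fair point of care, but it does not change the substance.
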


\begin{proof}
Obviously, the inequality $\WS_3(k,1)\leq 3k$ holds because a weakly $1$-sum-free multiset of $\Zt$ cannot contain more than once each element of $\Zt$. Let $S=\{1,2,\ldots,3k\}$. The partition $P=\{S_1,\ldots,S_k\}$, where $S_i=\{0,1,2\}$ for all $1\leq i\leq k$, is a weakly $1$-sum-free partition of $\pi_3(S)$ and thus $\WS_3(k,1)\geq 3k$. This completes the proof.
\end{proof}

\begin{claim}\label{claim10}
$\WS_3(1,l)=l$ for $l\geq2$.
\end{claim}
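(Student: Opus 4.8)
The plan is to establish the two matching inequalities $\WS_3(1,l)\geq l$ and $\WS_3(1,l)\leq l$ separately, in both cases passing through the projection $\pi_3$ into $\Zt$ and invoking the Proposition that relates weak $l$-sum-freeness modulo $3$ of a set $S$ to that of the multiset $\pi_3(S)$.

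For the lower bound I would simply take $S=\{1,2,\ldots,l\}$. Its projection $\pi_3(S)$ is a multiset of exactly $l$ elements of $\Zt$, and a multiset of only $l$ elements can never contain $l+1$ pairwise distinct terms $x_1,\ldots,x_l,y$; hence $\pi_3(S)$ is vacuously weakly $l$-sum-free. By the projection Proposition, $S$ is weakly $l$-sum-free modulo $3$, which already gives $\WS_3(1,l)\geq l$.

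For the upper bound I would show that $S=\{1,2,\ldots,l+1\}$ fails to be weakly $l$-sum-free modulo $3$. The key observation is that, because $l\geq2$, we have $l+1\geq3$, so $S$ contains the three consecutive integers $1,2,3$ and therefore $\pi_3(S)$ meets all three residue classes; that is, $|\pi_3(S)\cap\Zt|=3$. Part~4 of Lemma~\ref{lem3} then forces any weakly $l$-sum-free multiset of $\Zt$ whose support is all of $\Zt$ to have cardinality at most $l$. Since $|\pi_3(S)|=l+1>l$, the multiset $\pi_3(S)$ cannot be weakly $l$-sum-free, so $S$ is not weakly $l$-sum-free modulo $3$, whence $\WS_3(1,l)\leq l$.

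Combining the two bounds yields $\WS_3(1,l)=l$. There is essentially no difficult step here; the only point worth emphasizing is the role of the hypothesis $l\geq2$, which is precisely what guarantees that the extra element $l+1$ forces all three residues to appear and lets Lemma~\ref{lem3}(4) apply. (For $l=1$ this mechanism breaks down, consistent with the separately computed value $\WS_3(1,1)=3$.)
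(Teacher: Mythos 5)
Your proposal is correct and follows essentially the same route as the paper: the lower bound comes from the vacuous weak $l$-sum-freeness of any $l$-term multiset, and the upper bound from part~4 of Lemma~\ref{lem3} applied to $\pi_3(\{1,\ldots,l+1\})$, whose support is all of $\Zt$ since $l+1\geq 3$. Your version merely makes explicit the role of the hypothesis $l\geq 2$, which the paper leaves implicit.
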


\begin{proof}
First, since a multiset of $\Zt$ constituted by only $l$ terms is always weakly $l$-sum-free, it follows that $\WS_3(1,l)\geq l$. Moreover, by Lemma~\ref{lem3}, a weakly $l$-sum-free multiset $M$ such that $|M\cap\Zt|=3$ has cardinality of at most $l$. Therefore $\WS_3(1,l)\leq l$. This completes the proof.
\end{proof}

\subsection{For \texorpdfstring{$k=2$}{k=2}}

Let $k=2$ and let $l\ge2$ be a positive integer. We will prove that
$$
\WS_3(2,l) = \left\{\begin{array}{ll}
2l+2 & \text{for}\ l\ge2,\ l\equiv 0,1,5 \bmod{9},\\
2l+1 & \text{for}\ \left\{\begin{array}{l}
l=3,\\
l\geq5,\ l\equiv 2,3,4,6,7,8 \bmod{9},\\
\end{array}\right.\\
2l & \text{for}\ l\in\{2,4\}.
\end{array}\right.
$$

\begin{claim}\label{claim12}
Let $S=\{1,2,\ldots,n\}$ where $n\geq3$. Suppose that there exists a partition $P=\{S_1,S_2\}$ of $\pi_3(S)$ into $2$ weakly $l$-sum-free multisets. Then, the multisets $S_1$ and $S_2$ both have cardinality of at most $l+1$.
\end{claim}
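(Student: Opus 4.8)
The plan is to argue by contradiction and to exploit the symmetry of the two parts: since $P$ is a partition into \emph{two} weakly $l$-sum-free multisets, the roles of $S_1$ and $S_2$ are interchangeable, so it suffices to show that $|S_1|\le l+1$. Assume instead that $|S_1|\ge l+2$.

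First I would determine the support of $S_1$. By parts~(3) and~(4) of Lemma~\ref{lem3}, if $|S_1\cap\Zt|\in\{2,3\}$ then $|S_1|\le l+1$, contradicting the assumption; hence $S_1$ is supported on a single residue $v\in\Zt$. Part~(1) of Lemma~\ref{lem3} then excludes $v=0$, as it would force $|S_1|=\m_{S_1}(0)\le l$. Using the symmetry $x\mapsto -x$ of $\Zt$, which exchanges $1$ and $2$ and preserves weak $l$-sum-freeness, I may assume $v=1$, so that $S_1$ consists of $|S_1|=\m_{S_1}(1)\ge l+2$ copies of $1$.

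The main obstacle is that Lemma~\ref{lem3} by itself gives no bound on $|S_1|$ in this single-residue situation: when $l\not\equiv1\bmod 3$ a multiset consisting only of $1$'s is weakly $l$-sum-free regardless of its size, since its sum of any $l$ terms equals $l\ne 1$ in $\Zt$. The bound must therefore be extracted from the global structure of $\pi_3(S)$ combined with the weak-sum-freeness of $S_2$. Because $S_1$ contains no~$0$, every~$0$ of $\pi_3(S)$ lies in $S_2$, and since $n\ge3$ there is at least one such element; applying part~(1) of Lemma~\ref{lem3} to $S_2$ gives
$$
\m_{\pi_3(S)}(0)=\m_{S_2}(0)\le l.
$$
To finish, I would invoke the elementary fact that for $S=\{1,\ldots,n\}$ the three multiplicities $\m_{\pi_3(S)}(0),\,\m_{\pi_3(S)}(1),\,\m_{\pi_3(S)}(2)$ differ pairwise by at most~$1$, whence $\m_{\pi_3(S)}(1)\le\m_{\pi_3(S)}(0)+1\le l+1$. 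As $S_1\subseteq\pi_3(S)$ is made up only of~$1$'s, this yields $|S_1|=\m_{S_1}(1)\le\m_{\pi_3(S)}(1)\le l+1$, contradicting $|S_1|\ge l+2$. This contradiction proves $|S_1|\le l+1$, and the same argument applied to $S_2$ completes the claim.
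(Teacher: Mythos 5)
Your proof is correct, and after the common opening it takes a genuinely different route from the paper's. Both proofs assume $|S_1|\ge l+2$ and use parts (3) and (4) of Lemma~\ref{lem3} to force $S_1$ to be supported on a single residue $v$. The paper then finishes without ever singling out the residue $0$: it uses the inequality $\m_{\pi_3(S)}(v)\le\m_{\pi_3(S)}(v+1)+\m_{\pi_3(S)}(v+2)$ (valid for $S=\{1,\ldots,n\}$ with $n\ge3$) together with the fact that all elements of residues $v+1$ and $v+2$ lie in $S_2$, deducing $|S_1|\le|S_2|$; hence $|S_2|\ge l+2$ as well, so $S_2$ also has singleton support, and the two singleton supports cannot cover the three residues present in $\pi_3(S)$ --- a support-counting contradiction. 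You instead exclude $v=0$ by part (1), note that every $0$ of $\pi_3(S)$ is forced into $S_2$, bound their number by $l$ via part (1) applied to $S_2$, and conclude with the near-balance property $\m_{\pi_3(S)}(v)\le\m_{\pi_3(S)}(0)+1\le l+1$, contradicting $|S_1|\ge l+2$ directly. Your version yields an explicit numerical bound on $|S_1|$, while the paper's is case-free and needs only the coarser balance inequality. One cosmetic point: your ``WLOG $v=1$'' via $x\mapsto-x$ is not literally a reduction, because negation does not fix $\pi_3(S)$ (it swaps the multiplicities of $1$ and $2$); this is harmless, though, since the only facts you use afterwards --- the presence of at least one $0$ and pairwise multiplicity differences at most $1$ --- are symmetric in the residues $1$ and $2$, so your argument applies verbatim when $v=2$.
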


\begin{proof}
Suppose, without loss of generality, that $|S_1|\geq l+2$. First, we deduce from Lemma~\ref{lem3} that $|S_1\cap\Zt|=1$. Let $x\in\Zt$ such that $S_1\cap\Zt=\{x\}$. Obviously, we have $\m_{\pi_3(S)}(x)\leq \m_{\pi_3(S)}(x+1)+\m_{\pi_3(S)}(x+2)$. This leads to the inequality
$$
|S_1| = \m_{S_1}(x) \leq \m_{S_1\cup S_2}(x) \leq \m_{S_1\cup S_2}(x+1) + \m_{S_1\cup S_2}(x+2) = \m_{S_2}(x+1) + \m_{S_2}(x+2) \leq |S_2|.
$$
It follows that $|S_2|\geq l+2$ and thus $|S_2\cap\Zt|=1$, by Lemma~\ref{lem3} again, in contradiction with $|\pi_3(S)\cap\Zt|=3$.
\end{proof}

\begin{claim}\label{claim13}
$\WS_3(2,l)\in\{2l,2l+1,2l+2\}$ for all positive integers $l\ge2$.
\end{claim}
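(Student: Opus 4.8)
The plan is to sandwich $\WS_3(2,l)$ between $2l$ and $2l+2$; since these are three consecutive integers, the two inequalities together give exactly the claimed membership. The upper bound will fall out of Claim~\ref{claim12}, and the lower bound from a trivial explicit construction, so neither direction carries a real obstacle.

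First I would establish the upper bound $\WS_3(2,l)\le 2l+2$. By the Proposition relating sets modulo $m$ to multisets in $\Zm$, a $2$-partition of $\{1,\ldots,n\}$ into weakly $l$-sum-free sets modulo $3$ is exactly the same data as a partition $\{S_1,S_2\}$ of the multiset $\pi_3(\{1,\ldots,n\})$ into two weakly $l$-sum-free multisets. Since $l\ge2$, any value $n>2l+2$ automatically satisfies $n\ge3$, so the hypothesis of Claim~\ref{claim12} is met and both parts obey $|S_1|\le l+1$ and $|S_2|\le l+1$. Then $n=|\pi_3(\{1,\ldots,n\})|=|S_1|+|S_2|\le 2(l+1)=2l+2$, contradicting $n>2l+2$. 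Hence no $n>2l+2$ admits such a partition, and as $\WS_3(2,l)$ is by definition the largest admissible $n$, we conclude $\WS_3(2,l)\le 2l+2$.

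Next I would prove $\WS_3(2,l)\ge 2l$ by exhibiting a valid partition of $\{1,\ldots,2l\}$. Split $S=\{1,\ldots,2l\}$ into two blocks $S_1,S_2$ of cardinality $l$ each (any splitting will do). The projection $\pi_3(S_i)$ is then a multiset of $\Zt$ with only $l$ terms, which cannot contain $l+1$ pairwise distinct elements $x_1,\ldots,x_l,y$ and is therefore weakly $l$-sum-free for trivial cardinality reasons, exactly as already used in the proof of Claim~\ref{claim10}. By the Proposition, each $S_i$ is weakly $l$-sum-free modulo $3$, so $\{S_1,S_2\}$ is an admissible $2$-partition of $\{1,\ldots,2l\}$ and $\WS_3(2,l)\ge 2l$.

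Combining the two bounds yields $2l\le\WS_3(2,l)\le 2l+2$, which is precisely the statement $\WS_3(2,l)\in\{2l,2l+1,2l+2\}$. The only point requiring any attention is checking that $l\ge2$ makes the relevant $n$ at least $3$ so that Claim~\ref{claim12} applies; beyond that there is no hard step, since the genuine difficulty—deciding \emph{which} of the three values is attained as a function of the residue of $l$ modulo $9$—is exactly what the ensuing claims of this subsection are built to settle.
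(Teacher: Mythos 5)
Your proposal is correct and follows essentially the same route as the paper: the lower bound $\WS_3(2,l)\geq 2l$ from the trivial fact that a multiset of only $l$ terms in $\Zt$ cannot contain $l+1$ pairwise distinct elements, and the upper bound $\WS_3(2,l)\leq 2(l+1)$ as a direct consequence of Claim~\ref{claim12}. You merely make explicit some bookkeeping the paper leaves implicit (the projection via $\pi_3$, the check that $n\geq 3$, and the monotonicity underlying ``largest admissible $n$''), which is harmless.
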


\begin{proof}
Since a multiset of $l$ elements in $\Zt$ is always weakly $l$-sum-free, the inequality $\WS_3(2,l)\geq 2l$ holds. Moreover, from Claim~\ref{claim12}, we know that each multiset of a weakly $l$-sum-free $2$-partition has cardinality of at most $l+1$. Therefore $\WS_2(2,l)\leq 2(l+1)$.
\end{proof}

\begin{claim}\label{claim14}
$\WS_3(2,l)\geq 2l+2$ if and only if $l\equiv 0,1,5 \bmod{9}$.
\end{claim}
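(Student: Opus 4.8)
The plan is to translate the inequality into the existence of a suitable partition in $\Zt$ and then carry out a finite, structured case analysis. By the projection correspondence, $\WS_3(2,l)\geq 2l+2$ holds if and only if the multiset $\pi_3(S)$, with $S=\{1,2,\ldots,2l+2\}$, admits a partition into two weakly $l$-sum-free multisets. Since each such multiset has cardinality at most $l+1$ by Claim~\ref{claim12}, and the total cardinality is $2l+2$, both parts must have cardinality exactly $l+1$. Thus the whole question reduces to: can $\pi_3(S)$ be split into two weakly $l$-sum-free multisets of size $l+1$ each?

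First I would classify the weakly $l$-sum-free multisets $M$ of $\Zt$ with $|M|=l+1$. By Lemma~\ref{lem3}(4) such an $M$ satisfies $|M\cap\Zt|\leq 2$. If $|M\cap\Zt|=1$, then $M$ consists of $l+1$ equal elements $x$, and a direct check shows this is weakly $l$-sum-free exactly when $x\neq 0$ and $l\not\equiv 1\bmod 3$. If $|M\cap\Zt|=2$, then Lemma~\ref{lem4} applies: writing $x$ for the missing residue, $M$ is weakly $l$-sum-free if and only if $\sum_{z\in M}z=2x$ in $\Zt$.

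Next I would record the multiplicity function of $\pi_3(S)$, distinguishing the three residues of $l$ modulo $3$ (writing $l=3t$, $l=3s+1$, $l=3u+2$ respectively and computing $\m_{\pi_3(S)}(0)$, $\m_{\pi_3(S)}(1)$, $\m_{\pi_3(S)}(2)$ from $n=2l+2$). In every case, for $l\geq 2$, all three multiplicities are positive and no nonzero residue attains multiplicity $l+1$; hence a single-support part is impossible, so both parts must have support of size exactly $2$. Because all multiplicities are positive, the two missing residues must be distinct, which leaves three sub-cases according to which unordered pair $\{x_1,x_2\}$ of residues is missing. In each sub-case the residue missing from one part lies entirely in the other, and the prescribed part size $l+1$ then determines the split of the common residue uniquely; substituting into the two sum conditions of Lemma~\ref{lem4} yields two congruences on the quotient parameter $t$, $s$, or $u$.

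The heart of the argument, and the main obstacle, is this three-by-three bookkeeping. I expect that in each residue class of $l$ modulo $3$ exactly one of the three sub-cases produces two compatible congruences, that their compatibility reduces to a single congruence modulo $3$ on the quotient parameter, and that this translates into $l\equiv 0$, $1$, $5\bmod 9$ in the three respective classes, while the other two sub-cases always give incompatible congruences. The forward implication then follows by reading off the explicit valid partition furnished by the consistent sub-case, and the converse follows from the incompatibility of the congruences in the remaining sub-cases, together with a direct check of the small values $l\in\{2,3,4\}$ where some quotient parameter is too small for the generic argument. Assembling the three residue classes yields the stated criterion $l\equiv 0,1,5\bmod 9$.
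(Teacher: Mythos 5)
Your proposal is correct and follows essentially the same route as the paper: both arguments reduce to two parts of cardinality exactly $l+1$ with support of size exactly $2$ (via Claim~\ref{claim12} and Lemma~\ref{lem3}, where your explicit multiplicity count ruling out single-support parts is a point the paper leaves terse), then apply Lemma~\ref{lem4} to extract congruences modulo $3$ on the multiplicities, convert them to congruences modulo $9$ on $l$, and exhibit explicit partitions for sufficiency. The only difference is organizational --- you index the three sub-cases by the unordered pair of missing residues while the paper indexes by whether $0$ lies in both parts or only one --- and the bookkeeping you state as an expectation does work out exactly as predicted: the common-residue-$0$ case yields $l\equiv 0,5\bmod 9$ and the cases where $0$ is missing from one part yield $l\equiv 1\bmod 9$, with all other sub-cases giving incompatible congruences.
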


\begin{proof}
Let $S=\{1,2,\ldots,2l+2\}$. Suppose that there exists a partition $P=\{S_1,S_2\}$ of $\pi_3(S)$ into $2$ weakly $l$-sum-free multisets of $\Zt$. Since $|S_1|=|S_2|=l+1$ by Claim~\ref{claim12}, it follows from Lemma~\ref{lem3} that $|S_1\cap\Zt|=|S_2\cap\Zt|=2$.\\[2ex]
\textbf{Case 1:} if $0\in S_1$ and $0\in S_2$.\\
Without lost of generality, suppose that $S_1\cap\Zt=\{0,1\}$ and $S_2\cap\Zt=\{0,2\}$. By Lemma~\ref{lem4}, we know that the multisets $S_1$ and $S_2$ are weakly $l$-sum-free if and only if $\sum_{x\in S_1}x=2.2=1$ and $\sum_{x\in S_2}x=2.1=2$. It follows that $\m_{S_1}(1)\equiv 1\bmod{3}$ and $\m_{S_2}(2)\equiv 1\bmod{3}$. This implies that
$$
\m_{\pi_3(S)}(1)=\m_{S_1}(1)\equiv 1 \equiv \m_{S_2}(2) = \m_{\pi_3(S)}(2)\pmod{3}.
$$
Since $S$ is the set of the first $2l+2$ positive integers, it follows that $\m_{\pi_3(S)}(1)=\m_{\pi_3(S)}(2)$ or $\m_{\pi_3(S)}(1)=\m_{\pi_3(S)}(2)+1$. Therefore $\m_{\pi_3(S)}(1)=\m_{\pi_3(S)}(2)\equiv1\bmod{3}$. Moreover, by definition of the set $S$ again, either $\m_{\pi_3(S)}(0)=\m_{\pi_3(S)}(1)$ or $\m_{\pi_3(S)}(0)=\m_{\pi_3(S)}(1)-1$.\\[2ex]
\textbf{Case 1.1:} if $\m_{\pi_3(S)}(0)=\m_{\pi_3(S)}(1)=\m_{\pi_3(S)}(2)\equiv 1\bmod{3}$.\\
Then, $2l+2=\m_{\pi_3(S)}(0)+\m_{\pi_3(S)}(1)+\m_{\pi_3(S)}(2)\equiv 3 \bmod{9}$ and thus $l\equiv 5 \bmod{9}$. In this case, $l\equiv 5\bmod{9}$, we can verify with Lemma~\ref{lem4} that the following $2$-partition of $\pi_3(S)$ is weakly $l$-sum-free.
\begin{center}
\begin{tabular}{|c|c|c|c|c|}
\hline
$M$ & $\m_M(0)$ & $\m_M(1)$ & $\m_M(2)$ & $|M|$ \\
\hline\hline
$S_1$ & $(l+1)/3$ & $2(l+1)/3$ & $0$ & $l+1$ \\
\hline
$S_2$ & $(l+1)/3$ & $0$ & $2(l+1)/3$ & $l+1$ \\
\hline\hline
$\pi_3(S)$ & $2(l+2)/3$ & $2(l+2)/3$ & $2(l+2)/3$ & $2l+2$ \\
\hline
\end{tabular}
\end{center}
This proves that $\WS_3(2,l) \geq 2l+2$ for all $l\equiv 5\bmod{9}$.\\[2ex]
\textbf{Case 1.2:} if $\m_{\pi_3(S)}(0)+1=\m_{\pi_3(S)}(1)=\m_{\pi_3(S)}(2)\equiv 1\bmod{3}$.\\
Then, $2l+2=\m_{\pi_3(S)}(0)+\m_{\pi_3(S)}(1)+\m_{\pi_3(S)}(2)\equiv 2 \bmod{9}$ and thus $l\equiv 0 \bmod{9}$. We can verify with Lemma~\ref{lem4} that the following $2$-partition of $\pi_3(S)$ is weakly $l$-sum-free in this case.
\begin{center}
\begin{tabular}{|c|c|c|c|c|}
\hline
$M$ & $\m_M(0)$ & $\m_M(1)$ & $\m_M(2)$ & $|M|$ \\
\hline\hline
$S_1$ & $l/3$ & $2l/3+1$ & $0$ & $l+1$ \\
\hline
$S_2$ & $l/3$ & $0$ & $2l/3+1$ & $l+1$ \\
\hline\hline
$\pi_3(S)$ & $2l/3$ & $2l/3+1$ & $2l/3+1$ & $2l+2$ \\
\hline
\end{tabular}
\end{center}
This proves that $\WS_3(2,l) \geq 2l+2$ for all $l\equiv 0\bmod{9}$.\\[2ex]
\textbf{Case 2:} if only one of the multisets $S_1$ and $S_2$ contains elements $0\in\Zt$.\\
Without loss of generality, we can suppose that this is $S_1$. Then, we have $S_1\cap\Zt=\{0,a\}$ and $S_2\cap\Zt=\{a,2a\}$ with $a\in\{1,2\}\subset\Zt$. By Lemma~\ref{lem4}, we know that the multisets $S_1$ and $S_2$ are weakly $l$-sum-free if and only if $\sum_{x\in S_1}x=2.2a=a$ and $\sum_{x\in S_2}x=2.0=0$. It follows that $\m_{S_1}(a)\equiv 1\bmod{3}$ and $\m_{S_2}(a)\equiv \m_{S_2}(2a)\bmod{3}$. This implies that
$$
\m_{\pi_3(S)}(a) = \m_{S_1}(a) + \m_{S_2}(a) \equiv 1 + \m_{S_2}(2a) = 1 + \m_{\pi_3(S)}(2a) \pmod{3}.
$$
Since $\m_{\pi_3(S)}(1)=\m_{\pi_3(S)}(2)$ or $\m_{\pi_3(S)}(1)=\m_{\pi_3(S)}(2)+1$ by definition of the set $S$, we deduce that $a=1$ and $\m_{\pi_3(S)}(1)=\m_{\pi_3(S)}(2)+1$. Moreover, by definition of the set $S$ again, we have
$$
\m_{\pi_3(S)}(0)=\m_{\pi_3(S)}(2)=\m_{\pi_3(S)}(1)-1.
$$
Then $2l+2=\m_{\pi_3(S)}(0)+\m_{\pi_3(S)}(1)+\m_{\pi_3(S)}(2)=3\m_{\pi_3(S)}(0)+1\equiv1\bmod{3}$ and thus $l\equiv 1\bmod{3}$. It follows that $2\equiv l+1 = \m_{S_2}(1)+\m_{S_2}(2)\equiv 2.\m_{S_2}(2) = 2.\m_{\pi_3(S)}(2) \bmod{3}$. Therefore
$$
\m_{\pi_3(S)}(0)=\m_{\pi_3(S)}(2)=\m_{\pi_3(S)}(1)-1\equiv1\pmod{3}.
$$
Then $2l+2=\m_{\pi_{3}(S)}(0)+\m_{\pi_{3}(S)}(1)+\m_{\pi_{3}(S)}(2)\equiv 4\bmod{9}$ and thus $l\equiv 1\bmod{9}$. We can verify with Lemma~\ref{lem4} that the following $2$-partition of $\pi_3(S)$ is weakly $l$-sum-free in this case.
\begin{center}
\begin{tabular}{|c|c|c|c|c|}
\hline
$M$ & $\m_M(0)$ & $\m_M(1)$ & $\m_M(2)$ & $|M|$ \\
\hline\hline
$S_1$ & $(2l+1)/3$ & $(l+2)/3$ & $0$ & $l+1$ \\
\hline
$S_2$ & $0$ & $(l+2)/3$ & $(2l+1)/3$ & $l+1$ \\
\hline\hline
$\pi_3(S)$ & $(2l+1)/3$ & $(2l+4)/3$ & $(2l+1)/3$ & $2l+2$ \\
\hline
\end{tabular}
\end{center}
This proves that $\WS_3(2,l) \geq 2l+2$ for all $l\geq2$, $l\equiv 1\bmod{9}$.\\[2ex]
This completes the proof that $\WS_3(2,l)\geq 2l+2$ if and only if $l\equiv 0,1,5 \bmod{9}$.
\end{proof}

\begin{claim}\label{claim9}
$\WS_3(2,l)\geq 2l+1$ for $l\geq9$.
\end{claim}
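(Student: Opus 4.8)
The plan is to exhibit, for every $l\ge 9$, a partition of $\{1,2,\ldots,2l+1\}$ into two weakly $l$-sum-free sets modulo $3$; by the projection principle established earlier it is equivalent to split the multiset $\pi_3(S)$, where $S=\{1,\ldots,2l+1\}$, into two weakly $l$-sum-free multisets of $\Zt$. Write $a_v=\m_{\pi_3(S)}(v)$ for the three multiplicities. The two parts have total size $2l+1$ and, by Claim~\ref{claim12}, each has size at most $l+1$, so the only admissible pair of sizes is $\{l,\,l+1\}$. I would therefore aim for a part $S_2$ of size exactly $l$, which is automatically weakly $l$-sum-free since a multiset of $l$ elements admits no choice of $l+1$ distinct indices, and a part $S_1$ of size $l+1$ whose weak sum-freeness I control through Lemma~\ref{lem4}.

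Concretely I would build $S_1$ out of the residues $1$ and $2$ only, say $a$ copies of $1$ and $b$ copies of $2$ with $a+b=l+1$, and dump everything else (all $a_0$ zeros together with the leftover ones and twos) into $S_2$; the size of $S_2$ is then automatically $(2l+1)-(l+1)=l$. Because $S_1$ misses the residue $0$, Lemma~\ref{lem4} (applicable as soon as $S_1$ really meets both classes $1$ and $2$, i.e. $1\le b\le l$) says that $S_1$ is weakly $l$-sum-free precisely when $\sum_{x\in S_1}x=a+2b\equiv 0\pmod 3$. Using $a+b=l+1$, this becomes the single congruence $b\equiv-(l+1)\pmod 3$. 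The only remaining requirements are the availability conditions $a\le a_1$ and $b\le a_2$, that is $l+1-a_1\le b\le a_2$.

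The heart of the argument is then purely arithmetic. I would compute $a_0,a_1,a_2$ in the three cases $l\equiv 0,1,2\pmod 3$ (here $n=2l+1$ is congruent to $1,0,2$ respectively, giving the usual near-equal split of $\{1,\ldots,n\}$ among residue classes), and verify that the admissible interval for $b$, namely $[\,l+1-a_1,\ a_2\,]$, contains an integer in the prescribed class modulo $3$. This interval consists of $a_1+a_2-l=l+1-a_0$ consecutive integers, and any block of at least three consecutive integers meets every residue class modulo $3$; since $a_0\le(2l+1)/3$, one has $l+1-a_0\ge 3$ as soon as $l\ge 7$, hence in particular for all $l\ge 9$ a valid $b$ exists. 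Choosing such a $b$, and the corresponding $a=l+1-b$, completes the construction and yields $\WS_3(2,l)\ge 2l+1$.

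I expect no genuine obstacle beyond this bookkeeping. The one point to watch is the degenerate possibility that $S_1$ collapses onto a single residue class, in which case Lemma~\ref{lem4} no longer applies; but the availability bounds force $1\le b\le l$ (using $a_2\le l$ at the top end and $l+1-a_1\ge 1$ at the bottom end for $l\ge 9$), so $a$ and $b$ are both positive and $S_1$ genuinely meets two residue classes. The length estimate $l+1-a_0\ge 3$ is exactly what renders the threshold on $l$ harmless, and its comfortable margin for $l\ge 9$ means no separate small-case checking is needed within this claim.
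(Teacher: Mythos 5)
Your proposal is correct and follows essentially the same route as the paper: both take $S_1$ to be a multiset of size $l+1$ supported on exactly two residue classes, certify it weakly $l$-sum-free via Lemma~\ref{lem4}, and observe that the complementary part of size $l$ is automatically weakly $l$-sum-free. The only differences are cosmetic: you support $S_1$ on the classes $\{1,2\}$ instead of the paper's $\{0,1\}$, and you replace the paper's explicit case analysis on $q\bmod 3$ (where $l=3q+r$) with a cleaner uniform argument that the admissible interval of length $l+1-\m_{\pi_3(S)}(0)\geq 3$ must contain an integer in the required residue class.
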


\begin{proof}
Let $S=\{1,2,\ldots,2l+1\}$. We will prove that there exists a partition of $\pi_3(S)$ into $2$ weakly $l$-sum-free multisets. Consider the euclidean division of $l\geq9$ by $3$ : $l=3q+r$ where $q\geq3$ and $r\in\{0,1,2\}$. Then, $l+1=3q+r+1$ and $2l+1=6q+2r+1$ and $\pi_3(S)$ is a multiset of $\Zt$ whose multiplicity function verifies that
$$
\m_{\pi_3(S)}(1)\geq 2q+1,\quad \m_{\pi_3(S)}(2)\geq 2q\quad \text{and}\quad \m_{\pi_3(S)}(0)\geq 2q.
$$
First, we prove that there exists a submultiset $S_1$ with $|S_1|=l+1$ which is weakly $l$-sum-free. The multiset $S_1$ is defined below by its multiplicity function, distinguishing different cases depending on the residue class of $q$ modulo $3$.
\begin{center}
\begin{tabular}{|c|c|c|c|}
\hline
$\pi_3(q)$ & $\m_{S_1}(0)$ & $\m_{S_1}(1)$ & $\m_{S_1}(2)$ \\
\hline\hline
$0$ & $q+r$ & $2q+1$ & $0$ \\
\hline
$1$ & $q+r+2$ & $2q-1$ & $0$ \\
\hline
$2$ & $q+r+1$ & $2q$ & $0$ \\
\hline
\end{tabular}
\end{center}
\textbf{Case 1:} for $q\equiv0\bmod{3}$.\\
The multiset $S_1$ is well defined because $\m_{S_1}(0)=q+r \leq 2q \leq \m_{\pi_3(S)}(0)$ and $\m_{S_1}(1)=\m_{\pi_3(S)}(1)$. Moreover, it is weakly $l$-sum-free by Lemma~\ref{lem4} since
$$
\sum_{x\in S_1}x = (q+r).0 + (2q+1).1 = \pi_3(2q+1) = 1 = 2.2.
$$
\textbf{Case 2:} for $q\equiv1\bmod{3}$.\\
Remark that $q\geq4$ in this case. The multiset $S_1$ is well defined because $\m_{S_1}(0)=q+r+2 \leq 2q \leq \m_{\pi_3(S)}(0)$ and $\m_{S_1}(1)= 2q-1\leq 2q+1\leq \m_{\pi_3(S)}(1)$. Moreover, it is weakly $l$-sum-free by Lemma~\ref{lem4} since
$$
\sum_{x\in S_1}x = (q+r+2).0 + (2q-1).1 = \pi_3(2q-1) = 1 = 2.2.
$$
\textbf{Case 3:} for $q\equiv2\bmod{3}$.\\
The multiset $S_1$ is well defined because $\m_{S_1}(0)=q+r+1 \leq 2q \leq \m_{\pi_3(S)}(0)$ and $\m_{S_1}(1)=2q\leq 2q+1\m_{\pi_3(S)}(1)$. Moreover, it is weakly $l$-sum-free by Lemma~\ref{lem4} since
$$
\sum_{x\in S_1}x = (q+r+1).0 + (2q).1 = \pi_3(2q) = 1 = 2.2.
$$
Finally, if $S_2$ is the multiset, with cardinality $l$, constituted by all the other elements of $\pi_3(S)\setminus S_1$, then $S_2$ is clearly weakly $l$-sum-free and we have obtained a weakly $l$-sum-free $2$-partition of $\pi_3(S)$.
\end{proof}

Thus, by Claim~\ref{claim13}, Claim~\ref{claim14} and Claim~\ref{claim9}, we have already proved that
$$
\WS_3(2,l) = \left\{
\begin{array}{lll}
2l+2 & \text{for} & l\equiv 0,1,5\bmod{9}, \\
2l+1 & \text{for} & l\geq9,\ l\equiv 2,3,4,6,7,8\bmod{9}.
\end{array}
\right.
$$
It remains to settle the cases $l\in\{2,3,4,6,7,8\}$.

\begin{claim}\label{claim15}
$\WS_3(2,2) \leq 4$.
\end{claim}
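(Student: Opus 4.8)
The plan is to argue by contradiction. Suppose $\WS_3(2,2)\geq5$; then $S=\{1,2,3,4,5\}$ admits a $2$-partition into weakly $2$-sum-free sets modulo $3$, which (upon projecting to $\Zt$) is equivalent to a $2$-partition $\{S_1,S_2\}$ of the multiset $\pi_3(S)=\{0,1,1,2,2\}$, with $\m_{\pi_3(S)}(0)=1$ and $\m_{\pi_3(S)}(1)=\m_{\pi_3(S)}(2)=2$, into weakly $2$-sum-free multisets of $\Zt$. I will derive a contradiction from the scarcity of the residue $0$.

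First I would invoke Claim~\ref{claim12} with $l=2$ to get $|S_1|,|S_2|\leq3$; since $|S_1|+|S_2|=5$, one part has cardinality $3$ and the other cardinality $2$. Say $|S_1|=3$. The part $S_2$ of size $2$ is automatically weakly $2$-sum-free, since a multiset of only $l=2$ terms cannot exhibit the $l+1=3$ distinct terms required to violate the condition; hence the entire burden falls on the size-$3$ multiset $S_1$.

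The key observation is that $\pi_3(S)$ contains only a single copy of $0$, so $\m_{S_1}(0)\in\{0,1\}$. I would split along these two cases and, taking account of the available multiplicities $\m_{\pi_3(S)}(1)=\m_{\pi_3(S)}(2)=2$, list the handful of size-$3$ submultisets of $\{0,1,1,2,2\}$ in each case. A candidate that uses all three residues is excluded by Lemma~\ref{lem3}(4), which forces $|M|\leq l=2$ whenever $|M\cap\Zt|=3$; a candidate using exactly two residues is tested by the criterion $\sum_{x\in S_1}x=2x$ of Lemma~\ref{lem4}, where $x$ denotes the absent residue. A short check then rules out every candidate: $\{1,1,2\}$ and $\{1,2,2\}$ (absent residue $0$) have sums $1$ and $2$, neither equal to $2\cdot0=0$; $\{0,1,1\}$ (absent residue $2$) has sum $2\neq2\cdot2=1$; $\{0,2,2\}$ (absent residue $1$) has sum $1\neq2\cdot1=2$; and $\{0,1,2\}$ is eliminated by Lemma~\ref{lem3}(4).

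Since no weakly $2$-sum-free multiset of cardinality $3$ is a submultiset of $\pi_3(S)$, the partition $\{S_1,S_2\}$ cannot exist, contradicting the assumption and giving $\WS_3(2,2)\leq4$. The whole argument is a finite verification; the only point demanding care is the bookkeeping of the multiplicities (one $0$, two $1$'s, two $2$'s), so as not to overlook or wrongly admit a candidate submultiset, rather than any real obstacle.
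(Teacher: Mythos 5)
Your proof is correct and takes essentially the same route as the paper: both arguments reduce the claim to checking that none of the five $3$-element submultisets of $\pi_3(\{1,\ldots,5\})=\{0,1,1,2,2\}$, namely $\{0,1,1\}$, $\{0,1,2\}$, $\{0,2,2\}$, $\{1,1,2\}$, $\{1,2,2\}$, is weakly $2$-sum-free. The only cosmetic difference is that you carry out this finite check via Claim~\ref{claim12}, Lemma~\ref{lem3} and Lemma~\ref{lem4}, whereas the paper simply asserts the five candidates fail (which one verifies by direct inspection).
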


\begin{proof}
Let $S=\{1,2,3,4,5\}$. Then $\pi_3(\{1,2,3,4,5\})=\{0,1,1,2,2\}$. In any $2$-partition of $\pi_3(S)$ there is one multiset with cardinality of at least $3$. Since the submultisets $\{0,1,1\}$, $\{0,1,2\}$, $\{0,2,2\}$, $\{1,1,2\}$ and $\{2,2,1\}$ are not weakly $2$-sum-free in $\Zt$, it follows that there is no weakly $2$-sum-free $2$-partition of $\pi_3(S)$. Therefore $\WS_3(2,2)\leq 4$.
\end{proof}

So, from Claim~\ref{claim13} and Claim~\ref{claim15}, we obtain that $\WS_3(2,2)=4$.

\begin{claim}\label{claim16}
$\WS_3(2,4)\leq 8$.
\end{claim}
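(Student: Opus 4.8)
The plan is to match the trivial lower bound $\WS_3(2,4)\ge 2l=8$ coming from Claim~\ref{claim13}. Since $4\not\equiv 0,1,5\bmod 9$, the ``only if'' direction of Claim~\ref{claim14} already yields $\WS_3(2,4)\le 2l+1=9$, so it suffices to prove that $n=9$ admits no weakly $4$-sum-free $2$-partition; I will argue this by contradiction. Writing $S=\{1,\ldots,9\}$, the structural fact driving everything is that $\pi_3(S)$ is perfectly balanced, with $\m_{\pi_3(S)}(0)=\m_{\pi_3(S)}(1)=\m_{\pi_3(S)}(2)=3$.

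Assuming a weakly $4$-sum-free $2$-partition $\{S_1,S_2\}$ of $\pi_3(S)$ exists, I would first pin down the shapes of the parts. By Claim~\ref{claim12} each part has cardinality at most $l+1=5$, and since $|S_1|+|S_2|=9$ this forces $\{|S_1|,|S_2|\}=\{4,5\}$; say $|S_1|=5$ and $|S_2|=4$. The part $S_2$ is automatically weakly $4$-sum-free, having only $l=4$ elements, so the entire constraint rests on $S_1$. For $S_1$, Lemma~\ref{lem3} forbids $|S_1\cap\Zt|=3$ (which would force $|S_1|\le l=4$), while $|S_1\cap\Zt|=1$ is impossible because no residue occurs more than $3<5$ times in $\pi_3(S)$; hence $|S_1\cap\Zt|=2$, so $S_1$ omits exactly one residue $a\in\Zt$.

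The crux is then a one-line count through Lemma~\ref{lem4}. Setting $p=\m_{S_1}(a+1)$ and $q=\m_{S_1}(a+2)$, the bounds $p,q\le 3$ together with $p+q=5$ leave only $(p,q)\in\{(2,3),(3,2)\}$. Lemma~\ref{lem4} makes $S_1$ weakly $4$-sum-free precisely when $\sum_{x\in S_1}x=2a$ in $\Zt$, and since $\sum_{x\in S_1}x=p(a+1)+q(a+2)\equiv 2a+(p+2q)\pmod 3$, this reduces to $p+2q\equiv 0\pmod 3$. Evaluating the two admissible cases gives $p+2q\equiv 2$ and $p+2q\equiv 1$ respectively, so the congruence fails in both, and $S_1$ can never be weakly $4$-sum-free. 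This contradiction shows that $n=9$ has no such partition, whence $\WS_3(2,4)\le 8$.

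I anticipate no genuine obstacle: the argument is forced at every stage by the balanced multiplicity pattern of $\pi_3(\{1,\ldots,9\})$, which dictates the $5/4$ split and leaves the size-$5$ part with only two possible residue-multiplicity profiles, both of which collide with the congruence condition of Lemma~\ref{lem4}. The only mild care required is the short case analysis narrowing $|S_1\cap\Zt|$ to $2$ and the enumeration of $(p,q)$, both of which are immediate from the multiplicity bounds.
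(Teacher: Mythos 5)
Your proof is correct and follows essentially the same route as the paper: both arguments force a part of cardinality $5$, use Lemma~\ref{lem3} (and the multiplicity bound $3$) to conclude it meets exactly two residue classes, and then show the sum condition of Lemma~\ref{lem4} cannot hold. The only difference is cosmetic: where the paper lists the six candidate multisets explicitly, you replace that enumeration by the uniform congruence $p+2q\equiv 0 \pmod 3$ over the two profiles $(p,q)\in\{(2,3),(3,2)\}$, which is a compact repackaging of the same check.
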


\begin{proof}
Let $S=\{1,2,3,4,5,6,7,8,9\}$. Then $\pi_3(S)=\{0,0,0,1,1,1,2,2,2\}$. In any $2$-partition of $\pi_3(S)$ there is one multiset with cardinality of at least $5$. Since a weakly $4$-sum-free multiset with cardinality of $5$ contains exactly $2$ different terms of $\Zt$ by Lemma~\ref{lem3} and since the submultisets
$$
\{0,0,1,1,1\},\ \{0,0,0,1,1\},\ \{1,1,1,2,2\},\ \{1,1,2,2,2\},\ \{0,0,2,2,2\},\ \{0,0,0,2,2\}
$$
are not weakly $4$-sum-free in $\Zt$ by Lemma~\ref{lem4}, it follows that there is no weakly $4$-sum-free $2$-partition of $\pi_3(S)$. Therefore $\WS_3(2,4)\leq 8$.
\end{proof}

Thus, from Claim~\ref{claim13} and Claim~\ref{claim16}, we obtain that $\WS_3(2,4)=8$.

\begin{claim}\label{claim17}
$\WS_3(2,l)\geq 2l+1$ for $l\in\{3,6,7,8\}$.
\end{claim}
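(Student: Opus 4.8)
The plan is to exhibit, for each $l\in\{3,6,7,8\}$, an explicit weakly $l$-sum-free $2$-partition of $\pi_3(S)$ where $S=\{1,2,\ldots,2l+1\}$. Since Claim~\ref{claim13} already gives the upper bound $\WS_3(2,l)\leq 2l+2$, and since these values of $l$ satisfy $l\equiv2,3,4,6,7,8\bmod 9$ (for which $2l+2$ is ruled out by Claim~\ref{claim14}), a successful construction forces $\WS_3(2,l)=2l+1$. Because Claim~\ref{claim9} already handles $l\geq9$, these four small residues are exactly the cases left open, so the work is purely a finite verification rather than a general argument.

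The concrete approach follows the template of Claim~\ref{claim9}: I would split $\pi_3(S)$ into a multiset $S_1$ of cardinality $l+1$ together with its complement $S_2$ of cardinality $l$. The complement $S_2$ is automatically weakly $l$-sum-free, since any multiset of exactly $l$ elements is. So the only real constraint is on $S_1$. Here I would arrange $|S_1\cap\Zt|=2$, choosing the two residues present and their multiplicities so that Lemma~\ref{lem4} applies: with $x$ the absent residue, I need $\sum_{x_i\in S_1}x_i=2x$ in $\Zt$. For each of $l=3,6,7,8$ I would first record the multiplicity vector $(\m_{\pi_3(S)}(0),\m_{\pi_3(S)}(1),\m_{\pi_3(S)}(2))$ of $\pi_3(\{1,\ldots,2l+1\})$, then pick multiplicities for $S_1$ supported on two residues that (a) do not exceed the available multiplicities and (b) give the correct residue sum $2x$. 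A small table for each case, in the style of the tables in Claims~\ref{claim14} and \ref{claim9}, makes the verification immediate.

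The only genuinely delicate point is $l=3$. For $l=3$ one has $\pi_3(\{1,\ldots,7\})=\{0,0,1,1,1,2,2\}$, so $S_1$ must have four elements drawn from two residues; one checks which pair of residues admits a size-$4$ submultiset summing to twice the missing residue while respecting the available counts. (For instance, the four-element multiset on residues $\{1,2\}$ with sum $\equiv 0=2\cdot 0$ can be taken as three $1$'s and one $2$, leaving the complement weakly $3$-sum-free.) For $l=6,7,8$ the multiplicities are large enough that the construction of Claim~\ref{claim9}'s type adapts directly, distinguishing as needed the residue of the relevant count modulo $3$. Thus the main obstacle is not conceptual but simply the bookkeeping of ensuring, in the tightest case $l=3$, that a valid residue-balanced choice of $S_1$ exists; once these four finite instances are exhibited, combined with Claims~\ref{claim13}, \ref{claim14} and \ref{claim9}, the value $\WS_3(2,l)=2l+1$ for $l\in\{3,6,7,8\}$ follows.
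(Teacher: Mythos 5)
Your overall strategy is exactly the paper's: take a submultiset $S_1$ of cardinality $l+1$ supported on two residues of $\Zt$, verify it via Lemma~\ref{lem4}, and let $S_2$ be the complement of cardinality $l$, which is automatically weakly $l$-sum-free; combined with Claims~\ref{claim13}, \ref{claim14} and \ref{claim9} this pins down $\WS_3(2,l)=2l+1$. Your deferral to the construction of Claim~\ref{claim9} for $l\in\{6,7,8\}$ is also legitimate: these values all have $q=2$, i.e.\ $q\equiv2\bmod{3}$, and in that branch the constraints $\m_{S_1}(0)=q+r+1\leq\m_{\pi_3(S)}(0)$ and $\m_{S_1}(1)=2q\leq\m_{\pi_3(S)}(1)$ do hold for $q=2$ (it is only the other residue classes of $q$ that force $q\geq3$); the resulting multisets $(3,4,0)$, $(4,4,0)$, $(5,4,0)$ are precisely the tables in the paper's proof.

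However, your one explicitly worked case, $l=3$, is wrong, and it is exactly the case you correctly flagged as delicate (the Claim~\ref{claim9} recipe fails there, since it would require three elements $0$ while $\m_{\pi_3(S)}(0)=2$). The multiset consisting of three $1$'s and one $2$ has sum $3\cdot1+2=5\equiv2\pmod{3}$, not $0=2\cdot0$ as Lemma~\ref{lem4} demands when $0$ is the missing residue; concretely, two of the $1$'s together with the $2$ are pairwise distinct elements summing to $1$ modulo $3$, which equals the remaining element $1$, so $\{1,1,1,2\}$ is not weakly $3$-sum-free and your proposed partition fails. The fix is immediate and is what the paper does: take $S_1=\{1,1,2,2\}$, whose sum is $2+4=6\equiv0=2\cdot0\pmod{3}$, so $S_1$ is weakly $3$-sum-free by Lemma~\ref{lem4}, and $S_2=\{0,0,1\}$ has cardinality $3$. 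With that correction your argument coincides with the paper's proof.
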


\begin{proof}
Let $S=\{1,2,\ldots,2l+1\}$. We can verify by using Lemma~\ref{lem4} that the following partition $P=\{S_1,S_2\}$ of $\pi_3(S)$ is weakly $l$-sum-free.
\begin{center}
\begin{tabular}{cc}
 & \\
$l=3$ :
&
\begin{tabular}{|c|c|c|c|c|}
\hline
$M$ & $\m_M(0)$ & $\m_M(1)$ & $\m_M(2)$ & $|M|$ \\
\hline\hline
$S_1$ & $0$ & $2$ & $2$ & $4$ \\
\hline
$S_2$ & $2$ & $1$ & $0$ & $3$ \\
\hline\hline
$\pi_3(S)$ & $2$ & $3$ & $2$ & $7$ \\
\hline
\end{tabular}\\
 & \\
$l=6$ :
& 
\begin{tabular}{|c|c|c|c|c|}
\hline
$M$ & $\m_M(0)$ & $\m_M(1)$ & $\m_M(2)$ & $|M|$ \\
\hline
$S_1$ & $3$ & $4$ & $0$ & $7$ \\
\hline
$S_2$ & $1$ & $1$ & $4$ & $6$ \\
\hline\hline
$\pi_3(S)$ & $4$ & $5$ & $4$ & $13$ \\
\hline
\end{tabular}\\
 & \\
$l=7$ :
&
\begin{tabular}{|c|c|c|c|c|}
\hline
$M$ & $\m_M(0)$ & $\m_M(1)$ & $\m_M(2)$ & $|M|$ \\
\hline
$S_1$ & $4$ & $4$ & $0$ & $8$ \\
\hline
$S_2$ & $1$ & $1$ & $5$ & $7$ \\
\hline\hline
$\pi_3(S)$ & $5$ & $5$ & $5$ & $15$ \\
\hline
\end{tabular}\\

\end{tabular}
\begin{tabular}{cc}

 & \\
$l=8$ :
&
\begin{tabular}{|c|c|c|c|c|}
\hline
$M$ & $\m_M(0)$ & $\m_M(1)$ & $\m_M(2)$ & $|M|$ \\
\hline
$S_1$ & $5$ & $4$ & $0$ & $9$ \\
\hline
$S_2$ & $0$ & $2$ & $6$ & $8$ \\
\hline\hline
$\pi_3(S)$ & $5$ & $6$ & $6$ & $17$ \\
\hline
\end{tabular}
\\
\end{tabular}
\end{center}
\end{proof}

Finally, by Claim~\ref{claim14} and Claim~\ref{claim17}, we deduce that $\WS_3(2,l)=2l+1$ for $l\in\{3,6,7,8\}$. This concludes the proof of Theorem~\ref{thm4} in this case.

\subsection{For \texorpdfstring{$k\geq3$}{k>=3} and \texorpdfstring{$l\equiv 0,2\bmod{3}$}{l=0,2 mod 3}}

Let $k\geq3$ and $l$ be two positive integers, with $l\equiv 0$ or $2 \bmod{3}$. We will prove that
$$
\WS_3(k,l) = 3(k-2)l+2.
$$

\begin{claim}
$\WS_3(k,l) \leq 3(k-2)l+2$ for $k\geq3$ and $l\equiv0,2 \bmod{3}$.
\end{claim}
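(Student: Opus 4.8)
The plan is to argue by contradiction. Set $n=3(k-2)l+3$, let $S=\{1,\ldots,n\}$, and suppose that $\pi_3(S)$ admits a partition $P=\{S_1,\ldots,S_k\}$ into $k$ weakly $l$-sum-free multisets of $\Zt$. Since $n\equiv0\bmod 3$, the projection is balanced: $\m_{\pi_3(S)}(0)=\m_{\pi_3(S)}(1)=\m_{\pi_3(S)}(2)=(k-2)l+1$. Two structural facts from Lemma~\ref{lem3} will drive the counting. First, every part satisfies $\m_{S_i}(0)\le l$, so if $b$ parts contain a $0$ then $(k-2)l+1\le bl$, forcing $b\ge k-1$; hence at most one part contains no $0$. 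Second, any part containing a $0$ has at most two residue classes of $\Zt$ present (it cannot use all three by part~4 of Lemma~\ref{lem3}, nor be purely $0$ of size exceeding $l$ by part~1), so such a part has size at most $l+1$.

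First I would dispose of the generic case $k\ge4$. A part avoiding $0$ lives inside $\{1,2\}\subset\Zt$, and is either a pure class (of size at most $\max(\m_{\pi_3(S)}(1),\m_{\pi_3(S)}(2))=(k-2)l+1$) or uses both classes $1,2$ (of size at most $l+1$). Bounding each of the $\ge k-1$ parts meeting $0$ by $l+1$ and the at most one remaining part by $(k-2)l+1$, one gets
$$
n=\sum_{i=1}^{k}|S_i|\le(k-1)(l+1)+(k-2)l+1=(2k-3)l+k,
$$
a bound which remains valid (indeed with slack $(k-3)l$) when no part avoids $0$. Comparing with $n=(3k-6)l+3$ yields $(3k-6)l+3\le(2k-3)l+k$, i.e. $(k-3)(l-1)\le0$. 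Since $l\equiv0,2\bmod 3$ forces $l\ge2$, this is violated whenever $k\ge4$, the desired contradiction.

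The main obstacle is the boundary case $k=3$, where the crude count is exactly tight ($(2k-3)l+k=3l+3=n$) and yields no contradiction on its own. Here I would exploit the precise sum condition of Lemma~\ref{lem4}. Tightness forces every part to have size exactly $l+1$, and by the first paragraph at least two parts contain a $0$; each such part therefore has exactly two residue classes, one of them being $0$. A short computation with Lemma~\ref{lem4} (using that $\sum_{x\in S_i}x$ must equal twice the missing class) shows that every weakly $l$-sum-free two-class part $S_i$ of size $l+1$ containing $0$ satisfies $\m_{S_i}(0)\equiv l\bmod 3$. If exactly two parts contain $0$, they hold all $l+1$ zeros, so $l+1=\m_{\pi_3(S)}(0)\equiv 2l\bmod 3$, forcing $l\equiv1\bmod 3$, contrary to $l\equiv0,2\bmod 3$. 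If instead all three parts contain $0$, each is of type $\{0,1\}$ or $\{0,2\}$ and holds at most $l$ elements of its nonzero class; since $\m_{\pi_3(S)}(1)=\m_{\pi_3(S)}(2)=l+1>l$, at least two parts must be of type $\{0,1\}$ and at least two of type $\{0,2\}$, which is impossible with only three parts. Either way a contradiction is reached, completing the case $k=3$ and hence the claim.
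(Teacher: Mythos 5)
Your proof is correct, and although it rests on the same two pillars as the paper's (the size bounds of Lemma~\ref{lem3} combined with a pigeonhole on the zeros, then the sum criterion of Lemma~\ref{lem4} for the tight case $k=3$), the counting is organized differently and comes out shorter at both stages. For $k\geq4$, the paper first proves that \emph{every} part has cardinality at most $l+1$, which requires a separate sub-argument for a hypothetical part avoiding $0$: if it were a pure nonzero class of size at least $l+2$, then the $2((k-2)l+1)$ elements of the other two classes could not fit into the remaining $k-1$ parts of size at most $l+1$. Only then does the paper compare $3((k-2)l+1)$ with $k(l+1)$ to conclude. You bypass that sub-argument entirely by bounding the (at most one) part avoiding $0$ by the full class size $(k-2)l+1$, and the cruder count $(k-1)(l+1)+(k-2)l+1=(2k-3)l+k$ still contradicts $n=(3k-6)l+3$ once $k\geq4$ and $l\geq2$. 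For $k=3$, the paper enumerates the two possible intersection patterns of the three parts with $\Zt$ (either two parts of type $\{0,a\}$ with a pure third part, or parts of types $\{0,a\}$, $\{0,2a\}$, $\{a,2a\}$) and applies Lemma~\ref{lem4} to each case separately; your single congruence --- every two-class part of size $l+1$ containing $0$ has $\m_{S_i}(0)\equiv l\bmod{3}$, so counting zeros gives $l+1\equiv 2l\bmod{3}$, i.e.\ $l\equiv1\bmod{3}$ --- disposes of both patterns at once. You also make explicit the pigeonhole (at least two parts of type $\{0,1\}$ and two of type $\{0,2\}$ would be needed among only three parts) behind the paper's terser assertion that not all three parts can contain $0$. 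Both arguments are sound; yours is the leaner write-up of the same strategy.
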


\begin{proof}
Let $S=\{1,2,\ldots,3(k-2)l+3\}$. Suppose that there exists a partition $P=\{S_1,\ldots,S_k\}$ of $\pi_3(S)$ into $k$ weakly $l$-sum-free multisets in $\Zt$.

First, we prove that $|S_i|\le l+1$, for all $1\le i\le k$. From the definition of $\pi_3(S)$, we know that $\m_{\pi_3(S)}(x)=(k-2)l+1$, for all $x\in\Zt$. Since $\m_{S_i}(0)\le l$, for all $1\leq i\leq k$, by Lemma~\ref{lem3} and $\sum_{i=1}^{k}\m_{S_i}(0)=\m_{\pi_3(S)}(0)=(k-2)l+1$, it follows from the pigeonhole principle that there are at least $k-1$ multisets $S_i$ containing at least one element $0\in\Zt$. Moreover, by Lemma~\ref{lem3} again, a weakly $l$-sum-free multiset $S_i$ for which $\m_{\pi_3(S_i)}(0)\geq1$ has cardinality of at most $l+1$. Therefore, if the number of multisets $S_i$ for which $\m_{S_i}(0)\geq 1$ is $k$, we have $|S_i|\leq l+1$, for all $1\leq i\leq k$. In the other case, if the number of multisets $S_i$ for which $\m_{S_i}(0)\geq 1$ is exactly $k-1$, we can also prove that the $k$th multiset has cardinality of at most $l+1$. Indeed, if we suppose that $\m_{S_i}(0)\geq1$, for all $1\leq i\leq k-1$, and $\m_{S_k}(0)=0$ with $|S_k|\geq l+2$, then $S_k\cap\Zt=\{a\}$ with $a\in\{1,2\}\subset\Zt$ by Lemma~\ref{lem3}. Thus, since $l+2=\m_{\pi_3(S_k)}(a)\le \m_{\pi_3(S)}(a)=(k-2)l+1$, it follows that $k\geq4$. Then,
$$
2((k-2)l+1)-(k-1)(l+1) \begin{array}[t]{l}
= 2kl-4l+2-kl+l-k+1 \\
= kl-3l-k+3 \\
= (k-3)l-(k-3) \\
= (k-3)(l-1) \\
> 0. \\
\end{array}
$$
Therefore $2((k-2)l+1)>(k-1)(l+1)$. Moreover, since $S_k\cap\Zt=\{a\}$, the $2((k-2)l+1)$ elements $0$ and $2a$ of $\pi_3(S)$ must be in $S_1\cup S_2\cup\cdots\cup S_{k-1}$. But we know that $|S_i|\leq l+1$, for all $1\leq i\leq k-1$, in contradiction with $2((k-2)l+1)>(k-1)(l+1)$. We have proved that, in every cases, we have $|S_i|\leq l+1$, for all $1\leq i\leq k$. We deduce that
$$
3((k-2)l+1) = |\pi_3(S)| \leq k(l+1).
$$

Finally, since
$$
3((k-2)l+1)-k(l+1) \begin{array}[t]{l}
= 3kl-6l+3-kl-k \\
= 2kl -6l -k +3 \\
= k(2l-1) - 3(2l-1) \\
= (k-3)(2l-1), \\
\end{array}
$$
we obtain that $3((k-2)l+1)>k(l+1)$ for $k\geq4$, in contradiction with $|\pi_3(S)|\leq k(l+1)$. For $k=3$, we have $|S_1|=|S_2|=|S_3|=l+1$. Since $\m_{\pi_3(S)}(0)=l+1>l$, it follows from Lemma~\ref{lem3} that the number of multisets $S_i$ containing elements $0\in\Zt$ is at least of two. It is exactly two because if each multiset $S_i$ contains elements $0\in\Zt$, then there is a weakly $l$-sum-free multiset $S_i$ which contains all the elements of $\Zt$ and with $|S_i|=l+1$, in contradiction with Lemma~\ref{lem3}. So we deduce that the partition $P$ is either of the form
$$
S_1\cap\Zt = S_2\cap\Zt = \{0,a\}\quad \text{and}\quad S_3\cap\Zt=\{2a\},
$$
or of the form
$$
S_1\cap \Zt = \{0,a\},\quad S_2\cap \Zt = \{0,2a\}\quad \text{and}\quad S_3\cap \Zt = \{a,2a\},
$$
where $a\in\{1,2\}\subset\Zt$ and $|S_1|=|S_2|=|S_3|=l+1$. For the first form, we know from Lemma~\ref{lem4} that $S_1$ and $S_2$ are weakly $l$-sum-free if and only if $\sum_{x\in S_1}x=\sum_{x\in S_2}x=a$, that is, if and only if $\m_{S_1}(a)\equiv\m_{S_2}(a)\equiv 1\bmod{3}$. It follows that
$$
l+1 = \m_{\pi_3(S)}(a) = \m_{S_1}(a) + \m_{S_2}(a) \equiv 2 \pmod{3},
$$
and thus $l\equiv 1\bmod{3}$, in contradiction with the hypothesis that $l\equiv 0$ or $2\bmod{3}$. For the second form, we know from Lemma~\ref{lem4} that $S_1$ and $S_2$ are weakly $l$-sum-free if and only if $\sum_{x\in S_1}x=a$ and $\sum_{x\in S_2}x=2a$, that is, if and only if $\m_{S_1}(a)\equiv\m_{S_2}(a)\equiv 1\bmod{3}$. Moreover, we have $\m_{S_2}(2a)=l+1-\m_{S_3}(2a)=\m_{S_3}(a)$. It follows that
$$
l+1 = \m_{\pi_3(S)}(a) = \m_{S_1}(a) + \m_{S_3}(a) = \m_{S_1}(a) + \m_{S_2}(2a) \equiv 2 \pmod{3},
$$
and thus $l\equiv 1\bmod{3}$, in contradiction with the hypothesis that $l\equiv 0$ or $2\bmod{3}$. This concludes the proof.
\end{proof}

\begin{claim}
$\WS_3(k,l) \geq 3(k-2)l+2$ for $k\geq3$ and $l\equiv0,2 \bmod{3}$.
\end{claim}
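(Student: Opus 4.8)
The plan is to establish this lower bound by exhibiting an explicit weakly $l$-sum-free $k$-partition of $\pi_3(S)$ for $S=\{1,2,\ldots,3(k-2)l+2\}$. First I would record the multiplicity function of $\pi_3(S)$. Writing $n=3(k-2)l+2$ and performing the euclidean division by $3$, the residue $0$ occurs $(k-2)l$ times while the residues $1$ and $2$ each occur $(k-2)l+1$ times, so that
$$
\m_{\pi_3(S)}(0)=(k-2)l, \qquad \m_{\pi_3(S)}(1)=\m_{\pi_3(S)}(2)=(k-2)l+1.
$$

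The key observation, which is the exact analogue in $\Zt$ of the Remark following Lemma~\ref{lem1}, is that when $l\equiv0$ or $2\bmod3$ a multiset consisting only of elements $1\in\Zt$ (respectively only of elements $2\in\Zt$) is weakly $l$-sum-free regardless of its cardinality. Indeed, the sum of any $l$ of its terms equals $\pi_3(l)\neq1$ (respectively $\pi_3(2l)\neq2$) precisely because $l\not\equiv1\bmod3$, so no $l$ of its terms can sum to a further term. This is what makes the residues $1$ and $2$ \emph{free} to be packed into single classes, in contrast to the residue $0$, which is bounded by $\m_M(0)\le l$ from Lemma~\ref{lem3}.

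With this in hand the construction is immediate. I would take $S_1,\ldots,S_{k-2}$ to each consist of exactly $l$ copies of $0$, the class $S_{k-1}$ to consist of all $(k-2)l+1$ copies of $1$, and the class $S_k$ to consist of all $(k-2)l+1$ copies of $2$. The supply of zeros is used up exactly, since $(k-2)\cdot l=\m_{\pi_3(S)}(0)$, and the total cardinality is $(k-2)l+2((k-2)l+1)=3(k-2)l+2=|\pi_3(S)|$, so this is genuinely a $k$-partition of $\pi_3(S)$. Each $S_i$ with $1\le i\le k-2$ has only $l$ elements and is therefore trivially weakly $l$-sum-free, while $S_{k-1}$ and $S_k$ are weakly $l$-sum-free by the observation above. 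Hence $\pi_3(S)$ admits a weakly $l$-sum-free $k$-partition, giving $\WS_3(k,l)\ge 3(k-2)l+2$.

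Since this direction is a pure construction, there is no serious obstacle: the only thing to get right is the small verification implicit in the observation, namely that $l\not\equiv1\bmod3$ forces both $\pi_3(l)\neq1$ and $\pi_3(2l)\neq2$, which is checked in one line for each of the two residues $l\equiv0$ and $l\equiv2\bmod3$. I would present the resulting partition in a multiplicity table matching the style of the preceding claims.
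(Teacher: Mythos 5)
Your construction is correct and is essentially the paper's own proof: the paper uses exactly the same partition (one class containing all $(k-2)l+1$ copies of $1$, one containing all copies of $2$, and $k-2$ classes of $l$ copies of $0$ each), justified by the same two facts, namely that a multiset of $l$ elements is trivially weakly $l$-sum-free and that a constant multiset of $1$'s or $2$'s in $\Zt$ is weakly $l$-sum-free whenever $l\equiv 0,2\bmod 3$. The only differences are the relabeling of the classes and that you verify the latter observation explicitly, where the paper simply cites it as already remarked.
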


\begin{proof}
Let $S=\{1,2,\ldots,3(k-2)l+2\}$. We consider the following partition $P=\{S_1,\ldots,S_k\}$ of $\pi_3(S)$ and we prove that it is weakly $l$-sum-free.
\begin{center}
\begin{tabular}{|c|c|c|c|c|}
\hline
$M$ & $\m_M(0)$ & $\m_M(1)$ & $\m_M(2)$ & $|M|$ \\
\hline\hline
$S_1$ & $0$ & $(k-2)l+1$ & $0$ & $(k-2)l+1$ \\
\hline
$S_2$ & $0$ & $0$ & $(k-2)l+1$ & $(k-2)l+1$ \\
\hline
$S_3,\ldots,S_k$ & $l$ & $0$ & $0$ & $l$ \\
\hline\hline
$\pi_3(S)$ & $(k-2)l$ & $(k-2)l+1$ & $(k-2)l+1$ & $3(k-2)l+2$ \\
\hline
\end{tabular}
\end{center}
First, $P$ is a partition of $\pi_3(S)$ since the multiplicity functions verify that $\sum_{i=1}^{k}\m_{S_i}(x)=\m_{\pi_3(S)}(x)$ for all $x\in\Zt$. The multisets $S_1$ and $S_2$ are weakly $l$-sum-free because, as already remarked above, a multiset which is only constituted by elements $1$ or $2\in\Zt$ is always weakly $l$-sum-free when $l\equiv 0,2\bmod{3}$. For $S_3,\ldots,S_k$, multisets containing only $l$ elements are always weakly $l$-sum-free in $\Zt$. This completes the proof.
\end{proof}

\subsection{For \texorpdfstring{$k\geq 3$}{k>=3} and \texorpdfstring{$l\equiv 1\bmod{3}$}{l=1 mod 3}}

Let $k\geq 3$ and $l\ge2$ be two positive integers, with $l\equiv 1\bmod{3}$. We will prove that
$$
\WS_3(k,l) = \left\{
\begin{array}{lll}
k(l+1) & \text{for} & \left\{
\begin{array}{l}
k=3,\ k\geq5\ \text{and}\ l\geq2,\ l\equiv1\bmod{3},\\
k=4\ \text{and}\ l\geq2,\ l\equiv1,7\bmod{9},
\end{array}
\right.\\
4l+3 & \text{for} & k=4\ \text{and}\ l\equiv 4\bmod{9}.
\end{array}
\right.
$$

\begin{claim}\label{claim5}
$\WS_3(k,l)\leq k(l+1)$ for $k\geq1$ and $l\ge2,\ l\equiv1\bmod{3}$.
\end{claim}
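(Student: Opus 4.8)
The plan is to mirror the proof of Claim~\ref{claim3} from the modulus~$2$ case: the bound will follow essentially immediately from Lemma~\ref{lem3}, once we observe that under the hypothesis $l\equiv1\bmod{3}$ every weakly $l$-sum-free multiset of $\Zt$ has cardinality at most $l+1$. First I would set $S=\{1,2,\ldots,n\}$ with $n=\WS_3(k,l)$ and take a $k$-partition of $S$ into weakly $l$-sum-free sets modulo $3$. By the projection correspondence established earlier, this yields a $k$-partition $\{S_1,\ldots,S_k\}$ of the multiset $\pi_3(S)$ into weakly $l$-sum-free multisets of $\Zt$, and $n=|\pi_3(S)|=\sum_{i=1}^{k}|S_i|$. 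It therefore suffices to show $|S_i|\leq l+1$ for each $i$.

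To bound a single weakly $l$-sum-free multiset $M$ of $\Zt$ I would distinguish the three cases according to $|M\cap\Zt|\in\{1,2,3\}$. When $|M\cap\Zt|=2$, part~3 of Lemma~\ref{lem3} gives $|M|\leq l+1$ directly, and when $|M\cap\Zt|=3$, part~4 gives the even stronger bound $|M|\leq l$. The only case requiring the congruence hypothesis is $|M\cap\Zt|=1$: here $M$ consists of repetitions of a single residue $x\in\Zt$, and parts~1 and~2 of Lemma~\ref{lem3} guarantee $\m_M(x)\leq l$, so that $|M|=\m_M(x)\leq l$. Part~2 is precisely where $l\equiv1\bmod{3}$ is used, to bound the multiplicities of $1$ and $2$. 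In all three cases we obtain $|M|\leq l+1$.

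Summing these bounds over the $k$ parts then gives $n=\sum_{i=1}^{k}|S_i|\leq k(l+1)$, which is the desired inequality. I do not expect any genuine obstacle here; the entire content lies in Lemma~\ref{lem3}, and the role of the condition $l\equiv1\bmod{3}$ is simply to forbid arbitrarily large single-residue multisets of $1$'s or $2$'s. Without it, such multisets could be weakly $l$-sum-free of unbounded size, exactly as happens for the residue $1$ when $l$ is even in the modulus~$2$ setting, and the uniform bound $|M|\leq l+1$ would fail.
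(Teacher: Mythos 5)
Your proposal is correct and takes essentially the same approach as the paper: the paper's own proof is simply the statement that the bound ``directly follows from Lemma~\ref{lem3}'', and your case analysis on $|M\cap\Zt|\in\{1,2,3\}$ (with parts~1 and~2 of the lemma handling the single-residue case, where $l\equiv1\bmod{3}$ is needed) is exactly the omitted deduction, followed by summing $|S_i|\leq l+1$ over the $k$ parts.
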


\begin{proof}
Directly follows from Lemma~\ref{lem3}.
\end{proof}

\begin{claim}\label{claim8}
Let $l\equiv1\bmod{3}$. Let $M$ be a multiset of $\Zt$ such that $|M\cap\Zt|=2$ and $|M|=l+1$. Then, $M$ is weakly $l$-sum-free if and only if $\m_M(x)\equiv 1\bmod{3}$ for all $x\in M\cap\Zt$.
\end{claim}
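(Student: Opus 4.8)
The plan is to derive the claim from Lemma~\ref{lem4}, converting its sum condition into a congruence condition on the two multiplicities by exploiting that $\Zt$ is a field in which $0+1+2=0$.

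First I would write $M\cap\Zt=\{a,b\}$ and let $x$ be the unique element of $\Zt$ not appearing in $M$, so that $\{a,b,x\}=\Zt$. Setting $\alpha=\m_M(a)$ and $\beta=\m_M(b)$, we have $\alpha+\beta=|M|=l+1$ and $\sum_{i=1}^{l+1}x_i=\alpha a+\beta b$ in $\Zt$. The key observation is that $a+b+x=0+1+2=0$, hence $2x=-x=a+b$. By Lemma~\ref{lem4}, $M$ is weakly $l$-sum-free if and only if $\alpha a+\beta b=2x=a+b$, which rearranges to
$$
(\alpha-1)\,a+(\beta-1)\,b\equiv 0 \pmod 3.
$$
This single equation is what I would analyse.

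For the \emph{if} direction, assuming $\alpha\equiv\beta\equiv1\bmod 3$ makes both coefficients $\alpha-1$ and $\beta-1$ vanish modulo $3$, so the displayed identity holds trivially and $M$ is weakly $l$-sum-free. For the \emph{only if} direction I would use the hypothesis $l\equiv1\bmod 3$, which gives $\alpha+\beta=l+1\equiv2\bmod 3$, i.e. $(\alpha-1)+(\beta-1)\equiv0\bmod 3$. Substituting $\beta-1\equiv-(\alpha-1)$ into the displayed equation yields $(\alpha-1)(a-b)\equiv0\bmod 3$. Since $a\neq b$ in the field $\Zt$, the difference $a-b$ is invertible, forcing $\alpha-1\equiv0$ and then $\beta-1\equiv0$, that is $\alpha\equiv\beta\equiv1\bmod 3$.

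There is no real obstacle here: once Lemma~\ref{lem4} and the identity $2x=a+b$ are in place, the argument is pure arithmetic in $\Zt$. The only point requiring a little care is keeping the reasoning uniform across the three possible choices of the missing element $x$; casting everything in terms of $a-b$ being a unit in $\Zt$ avoids a case split and keeps the proof to a couple of lines.
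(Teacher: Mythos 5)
Your proof is correct and takes essentially the same route as the paper: both arguments derive the claim from Lemma~\ref{lem4} via the identity $2x=a+b$ together with the congruence $\m_M(a)+\m_M(b)=l+1\equiv 2\pmod{3}$ forced by $l\equiv 1\bmod{3}$. The only difference is cosmetic: the paper enumerates the two possible residue pairs $(1,1)$ and $(0,2)$ and refutes the latter by computing $2b=a+b$, whereas you merge the cases into the single factored equation $(\alpha-1)(a-b)\equiv 0\pmod{3}$ and conclude by invertibility of $a-b$ in $\Zt$.
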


\begin{proof}
Suppose that $M\cap\Zt = \{a,b\}$ with $a\neq b$. First, since $l\equiv 1\bmod{3}$, it follows that
$$
\m_{M}(a) + \m_{M}(b) = l+1 \equiv 2 \pmod{3}.
$$
Thus either $\m_{M}(a)\equiv \m_{M}(b)\equiv 1\bmod{3}$ or $\m_{M}(a)\equiv 0\bmod{3}$ and $\m_{M}(b)\equiv 2\bmod{3}$. Suppose that $\m_{M}(a)\equiv 0\bmod{3}$ and $\m_{M}(b)\equiv 2\bmod{3}$. We know, by Lemma~\ref{lem4}, that
$$
\sum_{x\in M}x = \m_{M}(a).a + \m_{M}(b).b = 2.2(a+b) = a+b.
$$
This leads to the equality
$$
2b = 0.a + 2.b = \m_{M}(a).a + \m_{M}(b).b = a+b,
$$
in contradiction with $a\neq b$. This completes the proof.
\end{proof}

\begin{claim}\label{claim4}
$\WS_3(k,l)\geq \WS_3(k-3,l)+3(l+1)$ for $k\geq4$ and $l\equiv 1\bmod{3}$.
\end{claim}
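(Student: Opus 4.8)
The plan is to imitate the inductive block-adjoining argument of Claim~\ref{claim1} from the modulus-$2$ case, except that here I adjoin \emph{three} new blocks rather than two. Starting from an optimal weakly $l$-sum-free $(k-3)$-partition $P=\{S_1,\ldots,S_{k-3}\}$ of $\pi_3(S)$, where $S=\{1,\ldots,\WS_3(k-3,l)\}$, I would extend it to a weakly $l$-sum-free $k$-partition of $\pi_3(S')$, where $S'=\{1,\ldots,\WS_3(k-3,l)+3(l+1)\}$. The key numerical observation is that the $3(l+1)$ additional integers project onto exactly $l+1$ extra copies of each residue class of $\Zt$, so that $\m_{\pi_3(S')}(x)=\m_{\pi_3(S)}(x)+(l+1)$ for every $x\in\Zt$. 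It therefore suffices to partition these new elements into three fresh weakly $l$-sum-free blocks.

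The concrete construction I would use distributes the $3(l+1)$ new elements cyclically among three blocks $S_{k-2},S_{k-1},S_k$, each of cardinality $l+1$ and using exactly two residues:
$$
\begin{array}{lll}
\m_{S_{k-2}}(0)=1, & \m_{S_{k-2}}(1)=l, & |S_{k-2}|=l+1,\\
\m_{S_{k-1}}(1)=1, & \m_{S_{k-1}}(2)=l, & |S_{k-1}|=l+1,\\
\m_{S_{k}}(2)=1, & \m_{S_{k}}(0)=l, & |S_{k}|=l+1,
\end{array}
$$
with all remaining multiplicities zero. A one-line check then shows that each residue class is used a total of $1+l=l+1$ times across these three blocks, so that $P'=\{S_1,\ldots,S_k\}$ really is a partition of $\pi_3(S')$.

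The final step is to verify that $S_{k-2}$, $S_{k-1}$ and $S_k$ are weakly $l$-sum-free. Each of them satisfies $|M\cap\Zt|=2$ and $|M|=l+1$, so Claim~\ref{claim8} applies directly: such a block is weakly $l$-sum-free precisely when both of its residue multiplicities are $\equiv1\bmod3$. This is exactly where the hypothesis $l\equiv1\bmod3$ enters — the two multiplicities occurring in every block are $1$ and $l$, and both are congruent to $1$ modulo $3$. I expect this congruence bookkeeping to be the only genuine subtlety; everything else is immediate, since the old blocks $S_1,\ldots,S_{k-3}$ are left untouched and weak $l$-sum-freeness is a property of each individual block. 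Combining the pieces yields $\WS_3(k,l)\geq\WS_3(k-3,l)+3(l+1)$, as claimed.
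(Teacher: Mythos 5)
Your proposal is correct and takes essentially the same route as the paper: the paper also adjoins three fresh two-residue blocks of cardinality $l+1$ with multiplicities $l$ and $1$ arranged cyclically (it takes $\m(0)=l,\m(1)=1$; $\m(1)=l,\m(2)=1$; $\m(2)=l,\m(0)=1$, whereas you take the mirror-image assignment), notes that their union contributes exactly $l+1$ copies of each residue so that $P'$ partitions $\pi_3(S')$, and verifies weak $l$-sum-freeness of the new blocks via Lemma~\ref{lem4} and Claim~\ref{claim8} using $l\equiv 1\bmod{3}$. The reversed orientation of your cyclic construction is immaterial, and both versions pass the check of Claim~\ref{claim8} since the multiplicities $1$ and $l$ are each $\equiv 1\bmod{3}$.
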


\begin{proof}
Let $S=\{1,2,\ldots,\WS_3(k-3,l)\}$. Let $P=\{S_1,\ldots,S_{k-3}\}$ be a partition of $\pi_3(S)$ into weakly $l$-sum-free multisets of $\Zt$. Consider the multisets $S_{k-2}$, $S_{k-1}$ and $S_k$ of $\Zt$ defined below by their multiplicity function.
\begin{center}
\begin{tabular}{|c|c|c|c|c|}
\hline
$M$ & $\m_M(0)$ & $\m_M(1)$ & $\m_M(2)$ & $|M|$ \\
\hline\hline
$S_{k-2}$ & $l$ & $1$ & $0$ & $l+1$ \\
\hline
$S_{k-1}$ & $0$ & $l$ & $1$ & $l+1$ \\
\hline
$S_{k}$ & $1$ & $0$ & $l$ & $l+1$ \\
\hline\hline
$S_{k-2}\cup S_{k-1}\cup S_{k}$ & $l+1$ & $l+1$ & $l+1$ & $3(l+1)$ \\
\hline
\end{tabular}
\end{center}
Then $P'=\{S_1,\ldots,S_k\}$ is a partition of the multiset $\pi_3(S')=\pi_3(\{1,2,\ldots,\WS_3(k-3,l)+3(l+1)\})$ since $\sum_{i=1}^{k}\m_{S_i}(x)=\m_{\pi_3(S)}(x)+(l+1)=\m_{\pi_3(S')}(x)$ for all $x\in\Zt$. Moreover, since $l\equiv 1\bmod{3}$, it follows that the multisets $S_{k-2}$, $S_{k-1}$ and $S_k$ are weakly $l$-sum-free in $\Zt$ by Lemma~\ref{lem4} and Claim~\ref{claim8}. Therefore $\WS_3(k,l)\geq \WS_3(k-3,l)+3(l+1)$ for all $k\geq4$ and $l\equiv 1\bmod{3}$.
\end{proof}

\begin{claim}\label{claim7}
$\WS_3(3,l) \geq 3(l+1)$ for $l\equiv1\bmod{3}$.
\end{claim}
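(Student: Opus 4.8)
The plan is to directly exhibit a weakly $l$-sum-free $3$-partition of $\pi_3(S)$, where $S=\{1,2,\ldots,3(l+1)\}$; this serves as the base case for the induction driven by Claim~\ref{claim4}, playing exactly the role that Claim~\ref{claim11} plays in the modulus $2$ argument. Since $3(l+1)$ is a multiple of $3$, the projection satisfies $\m_{\pi_3(S)}(0)=\m_{\pi_3(S)}(1)=\m_{\pi_3(S)}(2)=l+1$, so I only need to distribute the $l+1$ copies of each residue class among three parts.

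Concretely, I would reuse the three multisets already appearing in the proof of Claim~\ref{claim4}: let $P=\{S_1,S_2,S_3\}$ be defined by the multiplicity vectors $(\m_{S_1}(0),\m_{S_1}(1),\m_{S_1}(2))=(l,1,0)$, $(\m_{S_2}(0),\m_{S_2}(1),\m_{S_2}(2))=(0,l,1)$, and $(\m_{S_3}(0),\m_{S_3}(1),\m_{S_3}(2))=(1,0,l)$. Summing columnwise gives $(l+1,l+1,l+1)$, which matches $\m_{\pi_3(S)}$ exactly, so $P$ is a genuine partition of the multiset $\pi_3(S)$, with each part nonnegative and consistent with the available multiplicities (here $l\geq2$ guarantees all entries make sense).

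It then remains to check that each $S_i$ is weakly $l$-sum-free in $\Zt$. Each part has $|S_i\cap\Zt|=2$ and $|S_i|=l+1$, so Claim~\ref{claim8} applies and reduces the verification to the arithmetic condition that every residue occurring in $S_i$ has multiplicity $\equiv1\bmod3$. This is precisely where the hypothesis $l\equiv1\bmod3$ enters: the two nonzero multiplicities in each part are $l$ and $1$, both of which are $\equiv1\bmod3$. Equivalently, one may appeal directly to Lemma~\ref{lem4}: for $S_1$ the absent residue is $2$ and $\sum_{x\in S_1}x=l\cdot0+1\cdot1=1=2\cdot2$, and the analogous computations handle $S_2$ and $S_3$.

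There is no genuine combinatorial difficulty here, since the construction is fully explicit; the only thing to watch is the congruence bookkeeping, and the mild ``obstacle'' is simply confirming that the chosen multiplicities are nonnegative, reproduce $\pi_3(S)$, and satisfy the Claim~\ref{claim8} criterion. All three points follow immediately from $l\equiv1\bmod3$ together with $l\geq2$, yielding $\WS_3(3,l)\geq 3(l+1)$.
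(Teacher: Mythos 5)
Your proposal is correct and coincides with the paper's own proof: the paper exhibits exactly the same three multisets with multiplicity vectors $(l,1,0)$, $(0,l,1)$, $(1,0,l)$, and justifies that they are weakly $l$-sum-free by reference to the verification in Claim~\ref{claim4} (via Lemma~\ref{lem4} and Claim~\ref{claim8}), just as you do. Your additional explicit checks of the column sums and of the Lemma~\ref{lem4} condition are accurate and merely spell out what the paper leaves implicit.
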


\begin{proof}
Let $S=\{1,2,\ldots,3(l+1)\}$. The following $3$-partition $P=\{S_1,S_2,S_3\}$ of $\pi_3(S)$ is weakly $l$-sum-free, as already seen in the proof of Claim~\ref{claim4}.
\begin{center}
\begin{tabular}{|c|c|c|c|c|}
\hline
$M$ & $\m_M(0)$ & $\m_M(1)$ & $\m_M(2)$ & $|M|$ \\
\hline\hline
$S_{1}$ & $l$ & $1$ & $0$ & $l+1$ \\
\hline
$S_{2}$ & $0$ & $l$ & $1$ & $l+1$ \\
\hline
$S_{3}$ & $1$ & $0$ & $l$ & $l+1$ \\
\hline\hline
$\pi_3(S)$ & $l+1$ & $l+1$ & $l+1$ & $3(l+1)$ \\
\hline
\end{tabular}
\end{center}
This completes the proof.
\end{proof}

Let $k$ be a positive integer and let $S=\{1,2,\ldots,k(l+1)\}$. In the sequel of this section, suppose that there exists a partition $P=\{S_1,\ldots,S_k\}$ of $\pi_3(S)$ into $k$ weakly $l$-sum-free multisets of $\Zt$. By Lemma~\ref{lem3}, $|S_i|=l+1$ and $|S_i\cap\Zt|=2$ for all $1\leq i\leq k$. For every $x\in\Zt$, denote by $n_x$ the number of multisets $S_i$ in $P$ such that $x\in S_i\cap\Zt$. For $S_i\cap\Zt=\{a,b\}$, the multiset $S_i$ is weakly $l$-sum-free if and only if
$$
\m_{S_i}(a)\equiv \m_{S_i}(b) \equiv1\pmod{3}
$$
by Lemma~\ref{lem4} and Claim~\ref{claim8}. It follows that
$$
n_x\in\{1,\ldots,k\}\quad \text{and}\quad n_x \equiv \m_{\pi_3(S)}(x) \pmod{3},
$$
for all $x\in\Zt$. We distinguish different cases depending on the value of $k$ and the residue class of $l$ modulo $9$.

\begin{claim}\label{claim18}
$\WS_3(5,l) \geq 5(l+1)$ for $l\equiv4\bmod{9}$.
\end{claim}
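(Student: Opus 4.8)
The plan is to construct an explicit weakly $l$-sum-free $5$-partition by hand, since the recursion of Claim~\ref{claim4} falls just short here. That recursion gives $\WS_3(5,l)\geq \WS_3(2,l)+3(l+1)$; but for $l\equiv4\bmod9$ the value of $\WS_3(2,l)$ computed in the $k=2$ subsection is at most $2l+1$ (it equals $2l+1$ for $l\geq5$ and $2l$ for $l=4$), so this only yields $5l+4=5(l+1)-1$ or less, one short of the target. Hence I would abandon the recursion and exhibit a suitable partition directly.

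Set $S=\{1,2,\ldots,5(l+1)\}$. As $l\equiv1\bmod3$ we have $5(l+1)\equiv1\bmod3$, so $\pi_3(S)$ has multiplicities $\m_{\pi_3(S)}(0)=\m_{\pi_3(S)}(2)=(5l+4)/3$ and $\m_{\pi_3(S)}(1)=(5l+7)/3$. Since a weakly $l$-sum-free multiset of $\Zt$ has cardinality at most $l+1$ by Lemma~\ref{lem3} while the total cardinality is $5(l+1)$, each part of any such $5$-partition must have size exactly $l+1$, and hence exactly two distinct residues; by Claim~\ref{claim8} (valid since $l\equiv1\bmod3$) such a part is weakly $l$-sum-free precisely when both of its residue-multiplicities are $\equiv1\bmod3$. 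Guided by the necessary condition $n_x\equiv\m_{\pi_3(S)}(x)\bmod3$, which forces $n_1=3$ and $\{n_0,n_2\}=\{2,5\}$, I would use three parts of residue-type $\{0,1\}$ and two of type $\{0,2\}$:

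\begin{center}
\begin{tabular}{|c|c|c|c|c|}
\hline
$M$ & $\m_M(0)$ & $\m_M(1)$ & $\m_M(2)$ & $|M|$ \\
\hline\hline
$S_1$ & $(l-1)/3$ & $2(l+2)/3$ & $0$ & $l+1$ \\
\hline
$S_2$ & $(l-1)/3$ & $2(l+2)/3$ & $0$ & $l+1$ \\
\hline
$S_3$ & $2(l+2)/3$ & $(l-1)/3$ & $0$ & $l+1$ \\
\hline
$S_4$ & $1$ & $0$ & $l$ & $l+1$ \\
\hline
$S_5$ & $(l-1)/3$ & $0$ & $2(l+2)/3$ & $l+1$ \\
\hline\hline
$\pi_3(S)$ & $(5l+4)/3$ & $(5l+7)/3$ & $(5l+4)/3$ & $5l+5$ \\
\hline
\end{tabular}
\end{center}

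It then remains to verify two things, both short. First, that $P=\{S_1,\ldots,S_5\}$ is genuinely a partition of $\pi_3(S)$, i.e. that the three column sums reproduce the multiplicities of $\pi_3(S)$ computed above (a one-line arithmetic check). Second, that every nonzero entry is $\equiv1\bmod3$: indeed $(l-1)/3$, $2(l+2)/3$, $l$ and $1$ are all $\equiv1\bmod3$, and this is exactly where the hypothesis $l\equiv4\bmod9$ --- rather than merely $l\equiv1\bmod3$ --- is used, since $l\equiv1$ or $7\bmod9$ would push $(l-1)/3$ out of the class $1\bmod3$. Granting these, each $S_i$ has two distinct residues with both multiplicities $\equiv1\bmod3$, so it is weakly $l$-sum-free by Lemma~\ref{lem4} and Claim~\ref{claim8}, and the bound $\WS_3(5,l)\geq 5(l+1)$ follows. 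The only real obstacle is finding the distribution in the first place: one must spread residue $0$ over all five parts, residue $1$ over exactly three, and residue $2$ over exactly two, while keeping each individual multiplicity $\equiv1\bmod3$; the slight surplus of residue $1$ over $0$ and $2$ in $\pi_3(S)$ is precisely what the asymmetric choice $n_0=5,\ n_1=3,\ n_2=2$ accommodates.
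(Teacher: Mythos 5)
Your proposal is correct and takes essentially the same approach as the paper: the paper also settles on the distribution $n_0=5$, $n_1=3$, $n_2=2$ (three parts of type $\{0,1\}$, two of type $\{0,2\}$) and exhibits an explicit multiplicity table whose nonzero entries ($9r+4$, $6r+4$, $3r+1$, $1$ for $l=9r+4$) are all $\equiv 1\bmod 3$, verified via Lemma~\ref{lem4} and Claim~\ref{claim8}. Your table's specific entries differ from the paper's, but the structure, the use of the necessary conditions on $n_x$ as a guide, and the verification are identical.
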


\begin{proof}
Consider the euclidean division of $l$ by $9$, that is $l=9r+4$. Then,
$$
5(l+1)=5(9r+5)=45r+25=3(15r+8)+1
$$
and
$$
\begin{array}{l}
n_0 \equiv \m_{\pi_3(S)}(0) = 15r + 8 \equiv 2 \pmod{3},\\[2ex]
n_1 \equiv \m_{\pi_3(S)}(1) = 15r+9 \equiv0 \pmod{3},\\[2ex]
n_2 \equiv \m_{\pi_3(S)}(2) = 15r+8 \equiv2 \pmod{3}.
\end{array}
$$
We deduce that $n_0,n_2\in\{2,5\}$ and $n_1=3$. Moreover, we have $n_0+n_1+n_2=10$. For $n_0=5$, $n_1=3$ and $n_2=2$, we can verify by using Lemma~\ref{lem4} and Claim~\ref{claim8} that the following partition of $\pi_3(S)$ is weakly $l$-sum-free.
\begin{center}
\begin{tabular}{|c||c|c|c||c|}
\hline
$M$ & $\m_M(0)$ & $\m_M(1)$ & $\m_M(2)$ & $|M|$ \\
\hline\hline
$S_1$ & $9r+4$ & $1$ & $0$ & $l+1$ \\
\hline
$S_2$ & $3r+1$ & $6r+4$ & $0$ & $l+1$ \\
\hline
$S_3$ & $1$ & $9r+4$ & $0$ & $l+1$ \\
\hline
$S_4$ & $3r+1$ & $0$ & $6r+4$ & $l+1$ \\
\hline
$S_5$ & $1$ & $0$ & $9r+4$ & $l+1$ \\
\hline\hline
$\pi_3(S)$ & $15r+8$ & $15r+9$ & $15r+8$ & $5(l+1)$ \\
\hline
\end{tabular}
\end{center}
This completes the proof.
\end{proof}

\begin{claim}\label{claim19}
$\WS_3(5,l) \geq 5(l+1)$ for $l\equiv7\bmod{9}$.
\end{claim}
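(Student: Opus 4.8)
The plan is to prove this lower bound exactly as in Claim~\ref{claim18}: for $S=\{1,2,\ldots,5(l+1)\}$ I would exhibit an explicit partition of $\pi_3(S)$ into five weakly $l$-sum-free multisets of $\Zt$, each of cardinality $l+1$ and meeting exactly two residue classes. Since $l\equiv1\bmod3$, Lemma~\ref{lem4} together with Claim~\ref{claim8} tells me that such a two-residue multiset of size $l+1$ is weakly $l$-sum-free if and only if both of its occurring multiplicities are $\equiv1\bmod3$, so the whole verification reduces to producing a table whose entries satisfy that congruence and whose column sums match $\pi_3(S)$.

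First I would write $l=9r+7$, so that $5(l+1)=45r+40=3(15r+13)+1$ and the multiplicity function of $\pi_3(S)$ is
$$
\m_{\pi_3(S)}(0)=\m_{\pi_3(S)}(2)=15r+13,\qquad \m_{\pi_3(S)}(1)=15r+14.
$$
Using the relations $n_x\equiv\m_{\pi_3(S)}(x)\bmod3$ established in the preamble to this subsection, I get $n_0\equiv n_2\equiv1$ and $n_1\equiv2\bmod3$, hence $n_0,n_2\in\{1,4\}$ and $n_1\in\{2,5\}$, subject to $n_0+n_1+n_2=2\cdot5=10$. To pin down the feasible triple I would observe that $15r+13>9r+8=l+1$, so all of residue $0$ (respectively residue $2$) cannot be packed into a single multiset of size $l+1$; thus $n_0\geq2$ and $n_2\geq2$, which forces $n_0=n_2=4$ and therefore $n_1=2$. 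In other words, exactly one multiset avoids residue $0$, one avoids residue $2$, and three avoid residue $1$.

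With that structure fixed I would write down a table realizing it, for instance
$$
\begin{array}{c|ccc|c}
M & \m_M(0) & \m_M(1) & \m_M(2) & |M| \\
\hline
S_1 & 0 & 6r+7 & 3r+1 & l+1 \\
S_2 & 9r+7 & 0 & 1 & l+1 \\
S_3 & 1 & 0 & 9r+7 & l+1 \\
S_4 & 6r+4 & 0 & 3r+4 & l+1 \\
S_5 & 1 & 9r+7 & 0 & l+1
\end{array}
$$
and then check that each row sums to $l+1=9r+8$, that the column sums reproduce $(15r+13,\,15r+14,\,15r+13)$, and that every nonzero entry is $\equiv1\bmod3$. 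Lemma~\ref{lem4} and Claim~\ref{claim8} then certify that $S_1,\ldots,S_5$ are all weakly $l$-sum-free, giving $\WS_3(5,l)\geq5(l+1)$.

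The only genuinely delicate step is the case analysis isolating $(n_0,n_1,n_2)=(4,2,4)$, coupled with confirming that a valid assignment of multiplicities exists for \emph{every} admissible $r\geq0$: at the smallest value $l=7$ (i.e.\ $r=0$) several entries collapse to $1$, and I would need to verify that the displayed multiplicities stay non-negative and realizable throughout. Once the table is validated, the argument concludes verbatim as in Claim~\ref{claim18}.
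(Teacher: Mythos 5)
Your proof is correct and takes essentially the same approach as the paper: the same reduction via Lemma~\ref{lem4} and Claim~\ref{claim8}, the same congruence analysis of $n_0,n_1,n_2$, and an explicit table that is exactly the paper's partition with the residues $0$ and $2$ interchanged and the rows permuted (your extra step forcing $(n_0,n_1,n_2)=(4,2,4)$ is sound but not strictly needed for a lower bound, where exhibiting one valid partition suffices). Your rows do sum to $l+1=9r+8$, every nonzero entry is $\equiv 1\bmod{3}$, and the column sums are $(15r+13,\,15r+14,\,15r+13)$, so the partition is valid for all $r\geq 0$, including $r=0$.
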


\begin{proof}
Consider the euclidean division of $l$ by $9$, that is $l=9r+7$. Then,
$$
5(l+1)=5(9r+8)=45r+40=3(15r+13)+1
$$
and
$$
\begin{array}{l}
n_0 \equiv \m_{\pi_3(S)}(0) = 15r + 13 \equiv 1 \pmod{3},\\[2ex]
n_1 \equiv \m_{\pi_3(S)}(1) = 15r+14 \equiv 2 \pmod{3},\\[2ex]
n_2 \equiv \m_{\pi_3(S)}(2) = 15r+13 \equiv 1 \pmod{3}.
\end{array}
$$
We deduce that $n_0,n_2\in\{1,4\}$ and $n_1\in\{2,5\}$. Moreover, we have $n_0+n_1+n_2=10$. For $n_0=n_2=4$ and $n_1=2$, we can verify by using Lemma~\ref{lem4} and Claim~\ref{claim8} that the following partition of $\pi_3(S)$ is weakly $l$-sum-free.
\begin{center}
\begin{tabular}{|c||c|c|c||c|}
\hline
$M$ & $\m_M(0)$ & $\m_M(1)$ & $\m_M(2)$ & $|M|$ \\
\hline\hline
$S_1$ & $3r+1$ & $6r+7$ & $0$ & $l+1$ \\
\hline
$S_2$ & $0$ & $9r+7$ & $1$ & $l+1$ \\
\hline
$S_3$ & $9r+7$ & $0$ & $1$ & $l+1$ \\
\hline
$S_4$ & $3r+4$ & $0$ & $6r+4$ & $l+1$ \\
\hline
$S_5$ & $1$ & $0$ & $9r+7$ & $l+1$ \\
\hline\hline
$\pi_3(S)$ & $15r+13$ & $15r+14$ & $15r+13$ & $5(l+1)$ \\
\hline
\end{tabular}
\end{center}
This completes the proof.
\end{proof}

\begin{claim}\label{claim20}
$\WS_3(4,l) \geq 4(l+1)$ for $l\equiv1\bmod{9}$.
\end{claim}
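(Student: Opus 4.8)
The plan is to follow the template already used in Claims~\ref{claim18} and \ref{claim19} for $k=5$: perform the Euclidean division of $l$ by $9$, read off the residue multiplicities of $\pi_3(S)$, deduce the forced values of the counts $n_0,n_1,n_2$, and then exhibit one explicit weakly $l$-sum-free $4$-partition by a multiplicity table.

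First I would write $l=9r+1$; since $l\geq2$ this forces $r\geq1$, so every multiplicity appearing below is positive. Setting $S=\{1,2,\ldots,4(l+1)\}$ and using $4(l+1)=36r+8=3(12r+2)+2$, I would record
$$
\m_{\pi_3(S)}(0)=12r+2,\qquad \m_{\pi_3(S)}(1)=\m_{\pi_3(S)}(2)=12r+3.
$$
Next, I would invoke the relations $n_x\in\{1,\ldots,k\}$ and $n_x\equiv\m_{\pi_3(S)}(x)\bmod{3}$ established just before Claim~\ref{claim18}. With $k=4$ these give $n_0\equiv2$ and $n_1\equiv n_2\equiv0\bmod{3}$, which force $n_0=2$ and $n_1=n_2=3$ (consistent with $n_0+n_1+n_2=8=2k$). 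Thus any candidate partition must meet $\Zt$ in the pattern $\{0,1\}$, $\{0,2\}$, $\{1,2\}$, $\{1,2\}$ across its four blocks, and by Lemma~\ref{lem4} together with Claim~\ref{claim8} each block is weakly $l$-sum-free exactly when both of its two nonzero multiplicities are $\equiv1\bmod{3}$.

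Finally I would verify that the partition $P=\{S_1,S_2,S_3,S_4\}$ of $\pi_3(S)$ given by the following table meets all of these requirements.
\begin{center}
\begin{tabular}{|c||c|c|c||c|}
\hline
$M$ & $\m_M(0)$ & $\m_M(1)$ & $\m_M(2)$ & $|M|$ \\
\hline\hline
$S_1$ & $9r+1$ & $1$ & $0$ & $l+1$ \\
\hline
$S_2$ & $3r+1$ & $0$ & $6r+1$ & $l+1$ \\
\hline
$S_3$ & $0$ & $9r+1$ & $1$ & $l+1$ \\
\hline
$S_4$ & $0$ & $3r+1$ & $6r+1$ & $l+1$ \\
\hline\hline
$\pi_3(S)$ & $12r+2$ & $12r+3$ & $12r+3$ & $4(l+1)$ \\
\hline
\end{tabular}
\end{center}
It then remains only to check that the column sums reproduce $\m_{\pi_3(S)}$, that each row totals $l+1=9r+2$, and that each nonzero entry lies in the class $1\bmod{3}$; granting this, Claim~\ref{claim8} makes every block weakly $l$-sum-free, and hence $\WS_3(4,l)\geq4(l+1)$.

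The main obstacle---indeed essentially the only content---is producing a single consistent multiplicity assignment once the pair pattern is forced: one must distribute the column totals $12r+2$, $12r+3$, $12r+3$ over four blocks so that every entry lands in $1\bmod{3}$ while every row sums to $9r+2$. The table above does exactly this (the pairing $\{0,1\},\{0,2\},\{1,2\},\{1,2\}$ with the displayed splits), and no deeper difficulty is expected beyond this bookkeeping.
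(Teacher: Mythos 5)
Your proposal is correct and takes essentially the same approach as the paper: the identical derivation $n_0=2$, $n_1=n_2=3$ from $n_x\equiv\m_{\pi_3(S)}(x)\bmod{3}$ with $n_0+n_1+n_2=8$, followed by an explicit multiplicity table certified block-by-block via Lemma~\ref{lem4} and Claim~\ref{claim8}. Your table uses the splits $(9r+1,1)$ and $(3r+1,6r+1)$ where the paper uses $(6r+1,3r+1)$ throughout, but both satisfy the same row, column, and residue conditions, so this is an immaterial variation.
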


\begin{proof}
Consider the euclidean division of $l$ by $9$, that is $l=9r+1$. Then,
$$
4(l+1)=4(9r+2)=36r+8=3(12r+2)+2
$$
and
$$
\begin{array}{l}
n_0 \equiv \m_{\pi_3(S)}(0) = 12r + 2 \equiv 2 \pmod{3},\\[2ex]
n_1 \equiv \m_{\pi_3(S)}(1) = 12r + 3 \equiv 0 \pmod{3},\\[2ex]
n_2 \equiv \m_{\pi_3(S)}(2) = 12r + 3 \equiv 0 \pmod{3}.
\end{array}
$$
We deduce that $n_0=2$ and $n_1=n_2=3$. Moreover, we have $n_0+n_1+n_2=8$. We can verify by using Lemma~\ref{lem4} and Claim~\ref{claim8} that the following partition of $\pi_3(S)$ is weakly $l$-sum-free.
\begin{center}
\begin{tabular}{|c||c|c|c||c|}
\hline
$M$ & $\m_M(0)$ & $\m_M(1)$ & $\m_M(2)$ & $|M|$ \\
\hline\hline
$S_1$ & $6r+1$ & $3r+1$ & $0$ & $l+1$ \\
\hline
$S_2$ & $6r+1$ & $0$ & $3r+1$ & $l+1$ \\
\hline
$S_3$ & $0$ & $6r+1$ & $3r+1$ & $l+1$ \\
\hline
$S_4$ & $0$ & $3r+1$ & $6r+1$ & $l+1$ \\
\hline\hline
$\pi_3(S)$ & $12r+2$ & $12r+3$ & $12r+3$ & $4(l+1)$ \\
\hline
\end{tabular}
\end{center}
This completes the proof.
\end{proof}

\begin{claim}
$\WS_3(4,l) = 4l+3$ for $l\equiv4\bmod{9}$.
\end{claim}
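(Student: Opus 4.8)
The plan is to prove the two matching bounds $\WS_3(4,l)\le 4l+3$ and $\WS_3(4,l)\ge 4l+3$, writing $l=9r+4$ throughout so that residue counts modulo $3$ and modulo $9$ can be tracked explicitly.

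For the upper bound I would argue by contradiction, supposing that $\WS_3(4,l)=4(l+1)$, which is the only value left open by Claim~\ref{claim5}; that is, I would assume a weakly $l$-sum-free $4$-partition $P=\{S_1,\ldots,S_4\}$ of $\pi_3(S)$ exists for $S=\{1,\ldots,4(l+1)\}$. As established in the paragraph preceding Claim~\ref{claim18}, every part then has $|S_i|=l+1$ and $|S_i\cap\Zt|=2$, and the incidence counts $n_x$ (the number of parts containing $x\in\Zt$) satisfy $n_x\in\{1,2,3,4\}$ together with $n_x\equiv\m_{\pi_3(S)}(x)\bmod 3$. A direct count for $n=4l+4=3(12r+6)+2$ gives $\m_{\pi_3(S)}(0)=12r+6$ and $\m_{\pi_3(S)}(1)=\m_{\pi_3(S)}(2)=12r+7$, whence $n_0=3$ and $n_1,n_2\in\{1,4\}$. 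Since $n_0+n_1+n_2=2\cdot 4=8$, we are forced into $\{n_1,n_2\}=\{1,4\}$.

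The contradiction then comes from whichever residue $x_0\in\{1,2\}$ has $n_{x_0}=1$: all $\m_{\pi_3(S)}(x_0)=12r+7$ of its copies must sit inside the single part containing it, yet that part has cardinality only $l+1=9r+5<12r+7$ for every $r\ge 0$. This rules out $n=4l+4$ and, combined with Claim~\ref{claim5}, gives $\WS_3(4,l)\le 4l+3$. For the lower bound I would exhibit an explicit weakly $l$-sum-free $4$-partition of $\pi_3(\{1,\ldots,4l+3\})$, whose multiplicities are $\m(0)=12r+6$, $\m(1)=12r+7$, $\m(2)=12r+6$. Three parts will have size $l+1$ with exactly two residues and all multiplicities $\equiv 1\bmod 3$, so that Claim~\ref{claim8} (via Lemma~\ref{lem4}) certifies them weakly $l$-sum-free, while the fourth has size $l$ and is weakly $l$-sum-free automatically. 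Concretely I would take $S_1=\{0^{3r+1},1^{6r+4}\}$, $S_2=\{0^{3r+4},2^{6r+1}\}$, $S_3=\{1^{3r+1},2^{6r+4}\}$ and $S_4=\{0^{6r+1},1^{3r+2},2^{1}\}$ (exponents denoting multiplicities), and then verify that the three residue totals and the four cardinalities $l+1,l+1,l+1,l$ come out exactly right.

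The routine-but-delicate point, which I expect to be the only genuine obstacle, is arranging the construction so that all multiplicities are simultaneously nonnegative integers, congruent to $1\bmod 3$ on the three size-$(l+1)$ parts, for every $r\ge 0$; the slack is tightest in the smallest instance $r=0$, i.e. $l=4$, and the choice above is designed precisely to survive that boundary case. By contrast the upper bound needs no case analysis beyond the single counting observation that a residue occurring in just one part cannot be packed into that part once its total multiplicity exceeds $l+1$.
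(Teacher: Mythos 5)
Your proposal is correct and takes essentially the same approach as the paper: your upper-bound argument is the paper's own counting of the incidence numbers $n_x$ (forcing $n_0=3$ and $n_1,n_2\in\{1,4\}$, then reaching a contradiction from $n_0+n_1+n_2=8$ together with $12r+7>l+1=9r+5$), differing only in the order in which these two constraints are invoked. For the lower bound the paper cites Claim~\ref{claim4} with $\WS_3(1,l)=l$ rather than writing a fresh partition, but that recursion unrolls to a construction of exactly the same shape as yours (three size-$(l+1)$ parts certified by Lemma~\ref{lem4}/Claim~\ref{claim8} plus one free part of size $l$), and your explicit multiplicities do check out.
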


\begin{proof}
Consider the euclidean division of $l$ by $9$, that is $l=9r+4$. Then,
$$
4(l+1)=4(9r+5)=36r+20=3(12r+6)+2
$$
and
$$
\begin{array}{l}
n_0 \equiv \m_{\pi_3(S)}(0) = 12r + 6 \equiv 0 \pmod{3},\\[2ex]
n_1 \equiv \m_{\pi_3(S)}(1) = 12r + 7 \equiv 1 \pmod{3},\\[2ex]
n_2 \equiv \m_{\pi_3(S)}(2) = 12r + 7 \equiv 1 \pmod{3}.
\end{array}
$$
We deduce that $n_0=3$ and $n_1,n_2\in\{1,4\}$. Moreover, $n_1,n_2\neq1$ since $\m_{\pi_3(S)}(1)=\m_{\pi_3(S)}(2)=12r+7>l+1$. Thus $n_0=3$ and $n_1=n_2=4$ in contradiction with $n_0+n_1+n_2=8$. This proves that
$$
\WS_3(4,l) \leq 4l+3\quad \text{for}\ l\equiv4\pmod{9}.
$$
Finally, from Claim~\ref{claim4} and Claim~\ref{claim10}, we deduce that $\WS_3(4,l)\geq \WS_3(1,l) + 3(l+1) = 4l+3$. This completes the proof.
\end{proof}

\begin{claim}\label{claim21}
$\WS_3(7,l) \geq 7(l+1)$ for $l\equiv4\bmod{9}$.
\end{claim}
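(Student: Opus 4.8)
The plan is to follow the template of Claims~\ref{claim18}--\ref{claim20} and exhibit an explicit weakly $l$-sum-free $7$-partition of $\pi_3(S)$ for $S=\{1,2,\ldots,7(l+1)\}$, displayed as a table of multiplicities. First I would set $l=9r+4$, so that $l+1=9r+5$ and $7(l+1)=63r+35=3(21r+11)+2$; counting residues in $\{1,\ldots,63r+35\}$ gives $\m_{\pi_3(S)}(0)=21r+11$ and $\m_{\pi_3(S)}(1)=\m_{\pi_3(S)}(2)=21r+12$. Using the constraints recorded just before Claim~\ref{claim18}, namely $n_x\equiv\m_{\pi_3(S)}(x)\bmod 3$, $n_x\in\{1,\ldots,7\}$, and $n_0+n_1+n_2=2k=14$ (since every piece of a maximal partition satisfies $|S_i\cap\Zt|=2$), I would read off $n_0\equiv2$ and $n_1\equiv n_2\equiv0\bmod 3$, hence $n_0\in\{2,5\}$ and $n_1,n_2\in\{3,6\}$. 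The choice $(n_0,n_1,n_2)=(5,6,3)$ is compatible with the sum $14$ and is the one I would target.

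Next I would realize these incidence numbers by giving each piece a $2$-element support in $\Zt$ and splitting its mass $l+1=9r+5$ into two multiplicities both $\equiv1\bmod 3$; by Lemma~\ref{lem4} together with Claim~\ref{claim8}, any such piece of cardinality $l+1$ is automatically weakly $l$-sum-free. The useful building blocks are $1$, $3r+1$, $6r+4$, $9r+4$, all $\equiv1\bmod 3$, which pair up to $9r+5$ as $1+(9r+4)$ and $(3r+1)+(6r+4)$. Concretely I would take four pieces with support $\{0,1\}$, one with support $\{0,2\}$, and two with support $\{1,2\}$ (yielding $n_0=5$, $n_1=6$, $n_2=3$), distributing the blocks so the three column totals are exactly $21r+11$, $21r+12$, $21r+12$; a direct solution is
\[
\begin{array}{c|ccc|c}
M & \m_M(0) & \m_M(1) & \m_M(2) & |M|\\\hline
S_1 & 9r+4 & 1 & 0 & l+1\\
S_2 & 3r+1 & 6r+4 & 0 & l+1\\
S_3 & 1 & 9r+4 & 0 & l+1\\
S_4 & 9r+4 & 1 & 0 & l+1\\
S_5 & 1 & 0 & 9r+4 & l+1\\
S_6 & 0 & 3r+1 & 6r+4 & l+1\\
S_7 & 0 & 3r+1 & 6r+4 & l+1\\\hline
\pi_3(S) & 21r+11 & 21r+12 & 21r+12 & 7(l+1)
\end{array}
\]
each row summing to $l+1$.

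Finally I would check that this is a genuine partition of $\pi_3(S)$ by confirming the three column sums and that every nonzero entry is $\equiv1\bmod 3$, so that Lemma~\ref{lem4} and Claim~\ref{claim8} apply to each $S_i$. The only point demanding care is \emph{uniform} validity over all admissible $l$: since $l\geq2$ and $l\equiv4\bmod 9$ forces $r\geq0$, I must ensure no entry becomes negative or drops below its support threshold at the extreme case $r=0$ (that is, $l=4$), where $3r+1$, $6r+4$, $9r+4$ specialize to $1$, $4$, $4$, all positive. This boundary bookkeeping, rather than any deep idea, is the main obstacle; once it is settled, assembling the verifications yields $\WS_3(7,l)\geq 7(l+1)$ for $l\equiv4\bmod 9$.
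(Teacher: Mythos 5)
Your proposal is correct and follows essentially the same strategy as the paper: determine the incidence pattern $(n_0,n_1,n_2)=(5,6,3)$ from the congruence constraints, then exhibit an explicit table of multiplicities in which every piece has two-element support in $\Zt$, cardinality $l+1$, and both multiplicities $\equiv 1\bmod 3$, so that Lemma~\ref{lem4} and Claim~\ref{claim8} apply. Your table differs from the paper's only in the particular splitting of the $\{1,2\}$-supported pieces (you use $(3r+1,6r+4)$ twice where the paper uses $(6r+1,3r+4)$ and $(1,9r+4)$), and your column sums check out, so it is an equally valid witness.
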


\begin{proof}
Consider the euclidean division of $l$ by $9$, that is $l=9r+4$. Then,
$$
7(l+1)=7(9r+5)=63r+35=3(21r+11)+2
$$
and
$$
\begin{array}{l}
n_0 \equiv \m_{\pi_3(S)}(0) = 21r + 11 \equiv 2 \pmod{3},\\[2ex]
n_1 \equiv \m_{\pi_3(S)}(1) = 21r + 12 \equiv 0 \pmod{3},\\[2ex]
n_2 \equiv \m_{\pi_3(S)}(2) = 21r + 12 \equiv 0 \pmod{3}.
\end{array}
$$
We deduce that $n_0\in\{2,5\}$ and $n_1,n_2\in\{3,6\}$. Moreover, we have $n_0+n_1+n_2=14$. Since $\m_{\pi_3(S)}(0)=21r+11>18r+8=2l$, it follows that $n_0=5$. For $n_0=5$, $n_1=6$ and $n_2=3$, we can verify by using Lemma~\ref{lem4} and Claim~\ref{claim8} that the following partition of $\pi_3(S)$ is weakly $l$-sum-free.
\begin{center}
\begin{tabular}{|c||c|c|c||c|}
\hline
$M$ & $\m_M(0)$ & $\m_M(1)$ & $\m_M(2)$ & $|M|$ \\
\hline\hline
$S_1$ & $9r+4$ & $1$ & $0$ & $l+1$ \\
\hline
$S_2$ & $9r+4$ & $1$ & $0$ & $l+1$ \\
\hline
$S_3$ & $3r+1$ & $6r+4$ & $0$ & $l+1$ \\
\hline
$S_4$ & $1$ & $9r+4$ & $0$ & $l+1$ \\
\hline
$S_5$ & $1$ & $0$ & $9r+4$ & $l+1$ \\
\hline
$S_6$ & $0$ & $6r+1$ & $3r+4$ & $l+1$ \\
\hline
$S_7$ & $0$ & $1$ & $9r+4$ & $l+1$ \\
\hline\hline
$\pi_3(S)$ & $21r+11$ & $21r+12$ & $21r+12$ & $7(l+1)$ \\
\hline
\end{tabular}
\end{center}
This completes the proof.
\end{proof}

\begin{claim}\label{claim22}
$\WS_3(4,l) \geq 4(l+1)$ for $l\equiv7 \bmod{9}$.
\end{claim}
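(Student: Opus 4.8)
The plan is to mimic the constructive pattern of Claims~\ref{claim18}--\ref{claim21}: exhibit an explicit $4$-partition of $\pi_3(S)$ into weakly $l$-sum-free multisets, where $S=\{1,2,\ldots,4(l+1)\}$. Since $l\equiv7\bmod9$ implies $l\equiv1\bmod3$, Lemma~\ref{lem3} forces every block of such a partition to satisfy $|S_i|=l+1$ and $|S_i\cap\Zt|=2$, and Claim~\ref{claim8} tells us precisely which multiplicities are admissible: a block with $S_i\cap\Zt=\{a,b\}$ is weakly $l$-sum-free iff $\m_{S_i}(a)\equiv\m_{S_i}(b)\equiv1\bmod3$. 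So the whole task reduces to a counting/divisibility bookkeeping problem.

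First I would write $l=9r+7$, so that $4(l+1)=4(9r+8)=36r+32=3(12r+10)+2$, and read off from the definition of $S$ that
$$
\m_{\pi_3(S)}(0)=12r+10,\qquad \m_{\pi_3(S)}(1)=\m_{\pi_3(S)}(2)=12r+11.
$$
Then, with $n_x$ denoting the number of blocks containing $x\in\Zt$, the congruences $n_x\equiv\m_{\pi_3(S)}(x)\bmod3$ give $n_0\equiv1$ and $n_1\equiv n_2\equiv2\bmod3$; together with $n_x\in\{1,\ldots,4\}$ and $n_0+n_1+n_2=2\cdot4=8$ this forces $n_0=4$ and $n_1=n_2=2$. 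Hence every block must contain $0$, two blocks are of type $\{0,1\}$ and two of type $\{0,2\}$.

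Next I would split the available multiplicities accordingly, choosing values all $\equiv1\bmod3$, nonnegative, and with the correct row/column sums. Concretely I propose the partition
\begin{center}
\begin{tabular}{|c||c|c|c||c|}
\hline
$M$ & $\m_M(0)$ & $\m_M(1)$ & $\m_M(2)$ & $|M|$ \\
\hline\hline
$S_1$ & $3r+4$ & $6r+4$ & $0$ & $l+1$ \\
\hline
$S_2$ & $3r+1$ & $6r+7$ & $0$ & $l+1$ \\
\hline
$S_3$ & $3r+4$ & $0$ & $6r+4$ & $l+1$ \\
\hline
$S_4$ & $3r+1$ & $0$ & $6r+7$ & $l+1$ \\
\hline\hline
$\pi_3(S)$ & $12r+10$ & $12r+11$ & $12r+11$ & $4(l+1)$ \\
\hline
\end{tabular}
\end{center}
and I would then verify the three points: (i) $P$ is indeed a partition of $\pi_3(S)$, by checking the column sums match the multiplicities computed above; (ii) each $|S_i|=(3r+4)+(6r+4)=9r+8=l+1$; and (iii) each block is weakly $l$-sum-free, since all its nonzero-class multiplicities are $\equiv1\bmod3$ (for instance $3r+1\equiv3r+4\equiv6r+4\equiv6r+7\equiv1\bmod3$), so Lemma~\ref{lem4} and Claim~\ref{claim8} apply.

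I do not expect any serious obstacle here: the only delicate point is ensuring the chosen multiplicities are simultaneously nonnegative (which holds for all $r\geq0$ since the smallest entry is $3r+1\geq1$), congruent to $1$ modulo $3$, and summing correctly, and the symmetric $1\leftrightarrow2$ choice above discharges this automatically. The minor care needed is to confirm that the forced profile $n_0=4,\ n_1=n_2=2$ is genuinely realizable by blocks each meeting $\Zt$ in exactly two classes, which the table does by construction. This yields $\WS_3(4,l)\geq4(l+1)$ for $l\equiv7\bmod9$.
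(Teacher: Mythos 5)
Your proposal is correct and follows the paper's proof essentially verbatim: the same reduction to the profile $n_0=4$, $n_1=n_2=2$, and the exact same table of multiplicities $(3r+4,6r+4,0)$, $(3r+1,6r+7,0)$ and their $1\leftrightarrow2$ mirrors, verified via Lemma~\ref{lem4} and Claim~\ref{claim8}. The only (harmless) difference is that you force $n_0=4$ from the identity $n_0+n_1+n_2=8$, whereas the paper deduces it from $\m_{\pi_3(S)}(0)>l$; both arguments are valid.
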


\begin{proof}
Consider the euclidean division of $l$ by $9$, that is $l=9r+7$. Then,
$$
4(l+1)=4(9r+8)=36r+32=3(12r+10)+2
$$
and
$$
\begin{array}{l}
n_0 \equiv \m_{\pi_3(S)}(0) = 12r + 10 \equiv 1 \pmod{3},\\[2ex]
n_1 \equiv \m_{\pi_3(S)}(1) = 12r + 11 \equiv 2 \pmod{3},\\[2ex]
n_2 \equiv \m_{\pi_3(S)}(2) = 12r + 11 \equiv 2 \pmod{3}.
\end{array}
$$
We deduce that $n_0\in\{1,4\}$ and $n_1=n_2=2$. Since $\m_{\pi_3(S)}(0)=12r+10>9r+7=l$, it follows that $n_0=4$. For $n_0=4$ and $n_1=n_2=2$, we can verify by using Lemma~\ref{lem4} and Claim~\ref{claim8} that the following partition of $\pi_3(S)$ is weakly $l$-sum-free.
\begin{center}
\begin{tabular}{|c||c|c|c||c|}
\hline
$M$ & $\m_M(0)$ & $\m_M(1)$ & $\m_M(2)$ & $|M|$ \\
\hline\hline
$S_1$ & $3r+4$ & $6r+4$ & $0$ & $l+1$ \\
\hline
$S_2$ & $3r+1$ & $6r+7$ & $0$ & $l+1$ \\
\hline
$S_3$ & $3r+4$ & $0$ & $6r+4$ & $l+1$ \\
\hline
$S_4$ & $3r+1$ & $0$ & $6r+7$ & $l+1$ \\
\hline\hline
$\pi_3(S)$ & $12r+10$ & $12r+11$ & $12r+11$ & $4(l+1)$ \\
\hline
\end{tabular}
\end{center}
This completes the proof.
\end{proof}

Finally, we obtain that $\WS_3(k,l) = k(l+1)$ for all $k\geq 3$ and $l\geq2$, with $l\equiv 1\bmod{3}$, except for $k=4$ and $l\equiv 4\bmod{9}$, by combining Claim~\ref{claim5}, Claim~\ref{claim4}, Claim~\ref{claim7}, Claim~\ref{claim18}, Claim~\ref{claim19}, Claim~\ref{claim20}, Claim~\ref{claim21} and Claim~\ref{claim22}.\\[2ex]
This concludes the proof of Theorem~\ref{thm4}.

\section{Conclusion and future work}

In this paper, the modular generalized Schur numbers $\Sc_m(k,l)$ and the modular
generalized weak Schur numbers $\WS_m(k,l)$ have been explicitly determined for all positive integers $k$ and $l$ and for small moduli $m\in\{1,2,3\}$. Although the determination of the exact values of $\Sc_m(k,l)$ and of $\WS_m(k,l)$ seems to be much more difficult for moduli $m \geq 4$, we can hope to find nontrivial lower and upper bounds of these numbers.

\subsection*{Acknowledgements}
We would like to thank Shalom Eliahou for his constructive comments and suggestions throughout the preparation of this paper.



\end{document}